\newtheorem{thm}{Theorem}[section]
\newtheorem{conj}{Conjecture}[section]
\newtheorem{coro}{Corollary}
\newtheorem{definition}{Definition}
\newtheorem{lem}[thm]{Lemma}
\newtheorem{prop}[thm]{Proposition}
\newtheorem{obse}[thm]{Observation}
\newtheorem{case}{Case}
\newtheorem{problem}{Problem}
\newcommand{\yw}[1]{{#1}}
\newenvironment{proof}{{\medbreak\noindent\it  Proof.}\,}{\hfill$\square$\medbreak}
\tikzset{  
	-stealth,auto,node distance =0.8 cm and 1 cm, thin, 
	state/.style ={circle, draw, inner sep=0.2pt}, 
	point/.style = {circle, draw, inner sep=0.18cm, fill, node contents={}},  
	el/.style = {inner sep=2pt, align=right, sloped}  
}
\begin{document}
	
	\title{Strong arc decompositions of split digraphs\thanks{Research supported by the Independent Research Foundation of Denmark under grant number DFF 7014-00037B}}
	\author{J. Bang-Jensen\thanks{Department of Mathematics and Computer Science, University of Southern Denmark, Odense Denmark. (email: jbj@imada.sdu.dk)}\and Y. Wang \thanks{School of Mathematics, Shandong University, Jinan 250100, China and Department of Mathematics and Computer Science, University of Southern Denmark, Odense Denmark. (email:yunwang@imada.sdu.dk,wangyun\_sdu@163.com)}}
	
	\maketitle
	
	\begin{abstract}
          A {\bf strong arc decomposition} of a digraph $D=(V,A)$ is a partition of its arc set $A$ into two sets $A_1,A_2$ such that the digraph $D_i=(V,A_i)$ is strong for $i=1,2$. Bang-Jensen and Yeo (2004) conjectured that there is some $K$ such that every $K$-arc-strong digraph has a strong arc decomposition. They also proved that with one exception on 4 vertices every 2-arc-strong semicomplete digraph has a strong arc decomposition. Bang-Jensen and Huang (2010) extended this result to locally semicomplete digraphs by proving that every 2-arc-strong locally semicomplete digraph which is not the square of an even cycle has a strong arc decomposition. This implies that every 3-arc-strong locally semicomplete digraph has a strong arc decomposition. A {\bf split digraph} is a digraph whose underlying undirected graph is a split graph, meaning that its vertices can be partioned into a clique and an independent set. Equivalently, a split digraph is any digraph which can be   obtained from a semicomplete digraph $D=(V,A)$ by adding a new set $V'$ of vertices and some arcs between $V'$ and $V$. In this paper we prove that every 3-arc-strong split digraph has a strong arc decomposition which can be found in polynomial time and we provide infinite classes of 2-strong split digraphs with no strong arc decomposition. We also pose a number of open problems on split digraphs.
		
	\end{abstract}
	\noindent{\bf Keywords:} \texttt{Split digraph; semicomplete digraph, strong arc decomposition, branchings}

	\section{Introduction}\label{intro}
	Notation follows \cite{bang2009} so we only repeat a few definitions here (see also Section \ref{sec-prelim}).  A digraph is not allowed to have parallel arcs or loops. A directed multigraph can have parallel arcs but no loops.  	A directed multigraph is {\bf semicomplete} if it has no pair of non-adjacent vertices.

	A directed multigraph $D$ is {\bf strong} if there exists a path from $x$ to $y$  in $D$ for every ordered pair of distinct vertices $x$, $y$ of $D$ and $D$ is {\bf $k$-arc-strong} if $D\setminus{}A^{\prime}$ is strong for every subset $A^{\prime} \subseteq A(D)$ of size at most $k - 1$.  A \textbf{strong arc decomposition} of a directed multigraph $D = (V, A)$ is a decomposition of its arc set $A$ into two subsets $A_1$ and $A_2$ such that $A_1\cap A_2=\emptyset$ and  both of the spanning subdigraphs $D_1 = (V, A_1)$ and $D_2 = (V, A_2)$ are strong. Note that a directed multigraph with a strong arc decomposition must be 2‐arc‐strong.

        \begin{thm}\cite{bangTCS438}
          It is NP-complete to decide whether a digraph has a strong arc decomposition.
        \end{thm}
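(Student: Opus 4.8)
The plan is to first observe that the problem lies in $\mathsf{NP}$: given a candidate partition $(A_1,A_2)$ of $A$, one can verify in linear time, via two depth-first searches, that each of $D_1=(V,A_1)$ and $D_2=(V,A_2)$ is strong, so a strong arc decomposition is a polynomially checkable certificate. The substance of the theorem is therefore the $\mathsf{NP}$-hardness, which I would establish by a polynomial-time reduction from a suitable $\mathsf{NP}$-complete constraint-satisfaction problem, most naturally a variant of satisfiability such as $3$-SAT or Not-All-Equal $3$-SAT (the latter being attractive because its symmetry under flipping all literals matches the symmetry between the two colour classes).

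The design principle behind the reduction is the following local constraint. If $D$ admits a strong arc decomposition $(A_1,A_2)$, then in each $D_i$ every vertex has in-degree and out-degree at least one; hence every vertex of $D$ has in-degree and out-degree at least two, and whenever a vertex has out-degree exactly two its two out-arcs are forced to lie in different classes (and symmetrically for in-degree exactly two). A vertex of out-degree or in-degree exactly two therefore encodes a Boolean choice, namely which of its two arcs receives ``colour $1$''. I would exploit this to build variable gadgets whose $2$-colouring records a truth assignment, together with clause gadgets that are simultaneously strong in both colour classes precisely when the corresponding clause is satisfied. One must also add a modest amount of connecting structure (for instance auxiliary cycles through a few designated hub vertices) so that the two colour classes become globally strong once all local constraints are met, while keeping the whole object a simple digraph.

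The correctness argument then splits into the two standard directions. For the forward direction I would show that any satisfying assignment can be turned into a partition of $A$ in which both $D_1$ and $D_2$ are spanning and strong, checking strong connectivity gadget by gadget and across the connecting structure. For the reverse direction I would show that any strong arc decomposition forces each variable gadget into one of its two legal colourings, using the forced-split property of the degree-two vertices, and that strong connectivity of both classes inside each clause gadget is impossible unless the encoded clause is satisfied.

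I expect the main obstacle to be the reverse direction, namely ruling out ``spurious'' strong arc decompositions that respect the degree constraints but correspond to no satisfying assignment. The danger is that global strong connectivity might be achievable through unintended routings that bypass the intended gadget logic, so the gadgets must be engineered --- typically by keeping in- and out-degrees as close to two as possible and by tightly controlling the few high-degree hub vertices --- so that the only way to make both colour classes strong is to respect every variable and every clause constraint simultaneously. Establishing this rigidity, and confirming that the construction has size polynomial in the size of the formula, is where the real work lies.
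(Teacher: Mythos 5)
The paper does not prove this theorem; it is imported verbatim from the cited reference \cite{bangTCS438} (which in fact establishes hardness already for $2$-regular digraphs), so there is no in-paper argument to compare yours against. Judged on its own terms, your proposal establishes only the easy half. Membership in $\mathsf{NP}$ is fine: a partition $(A_1,A_2)$ is a certificate checkable by two strong-connectivity tests. But the hardness half is a plan, not a proof. You name a source problem (3-SAT or NAE-3-SAT), state a correct but very weak local observation (a vertex of out-degree exactly two must send one out-arc to each class), and then assert that variable gadgets, clause gadgets, and ``connecting structure'' can be engineered so that both colour classes are strong exactly when the formula is satisfiable --- without exhibiting a single gadget, a single arc of the construction, or either direction of the equivalence. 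Everything that makes such a reduction nontrivial is deferred.

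The gap is not cosmetic. The forced-split property at degree-two vertices only constrains arcs locally; strong connectivity is a global property, and the standard failure mode --- which you yourself identify --- is that unintended routings through the hubs make both classes strong regardless of the truth assignment. Ruling this out requires a concrete construction whose rigidity can actually be verified, and that construction is the entire content of the theorem. As written, the argument could equally well be offered for a statement that is false (e.g.\ for a restricted digraph class where the problem is polynomial), because nothing in it depends on properties that distinguish a working reduction from a non-working one. To complete this you would need to either reproduce the reduction of \cite{bangTCS438} or supply explicit gadgets together with full proofs of both the forward and reverse directions.
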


        In fact it was shown in \cite{bangTCS438} that the problem  is already NP-complete for 2-regular digraphs.

        An {\bf out-branching} (resp., {\bf in-branching}) of $D$ is a spanning oriented tree in which every vertex except one, called the {\bf root}, has in-degree (resp., out-degree) one in $D$. Thomassen \cite{thomassen1989}  made the following difficult conjecture.

        \begin{conj}
          \label{conj:CTbranch}
          There exists an integer $K$ such that every $K$-arc-strong digraph $D=(V,A)$   has an out-branching rooted at $u$ which is arc-disjoint from some in-branching rooted at $v$ for every choice of $u,v\in V$.
          \end{conj}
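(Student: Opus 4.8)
The plan is first to recognize that Conjecture \ref{conj:CTbranch} is implied by the Bang-Jensen and Yeo strong arc decomposition conjecture stated in the abstract. Indeed, if $D=(V,A)$ admits a strong arc decomposition $A=A_1\cup A_2$, then $D_1=(V,A_1)$ is strong and hence contains an out-branching $B^+_u$ rooted at any prescribed $u$, while $D_2=(V,A_2)$ is strong and hence contains an in-branching $B^-_v$ rooted at any prescribed $v$; since $A_1\cap A_2=\emptyset$, these branchings are automatically arc-disjoint. Thus any value of $K$ that works for the decomposition conjecture also works here. This reduction, however, merely replaces one open problem by a harder one, so for a direct attack I would instead try to build the two branchings separately while controlling the arcs they consume.

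For the direct approach, the main tool would be Edmonds' arc-disjoint branchings theorem together with Menger's theorem. In a $K$-arc-strong digraph there are $K$ arc-disjoint out-branchings rooted at $u$, so one may choose an out-branching $B^+_u$ economically; the crux is to choose it so that $D'=D\setminus A(B^+_u)$ still admits an in-branching rooted at $v$. An in-branching rooted at $v$ exists in $D'$ precisely when every vertex can still reach $v$, equivalently when there is no nonempty set $X\subseteq V\setminus\{v\}$ with $d^+_{D'}(X)=0$. First I would quantify how much an out-branching can decrease the out-cuts: removing $B^+_u$ lowers $d^+(X)$ by at most the number of tree-arcs leaving $X$, so I would seek a branching that is balanced on cuts, spreading its arcs so that no small out-cut is destroyed. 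The natural framework for enforcing such a cut condition is the submodular flow / Edmonds--Giles model, which can sometimes capture constraints of the form ``find a branching avoiding a prescribed deficiency''.

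The hard part, and the reason this conjecture remains open, is exactly this simultaneous control: the out-branching from $u$ and the in-branching to $v$ impose reachability conditions in \emph{opposite} directions on \emph{disjoint} arc sets, and there is no known single matroidal or submodular structure whose feasible solutions are precisely such arc-disjoint pairs. Indeed, without a connectivity hypothesis the problem of deciding existence of an out-branching from $u$ arc-disjoint from an in-branching to $v$ is NP-complete, so any proof must make essential use of the bound $K$; the difficulty is to pin down how large $K$ must be and to convert the connectivity surplus into the cut-covering guarantee above. A realistic intermediate goal would be to settle structured cases first --- for instance split digraphs, as this paper does for strong arc decompositions --- and to extract a uniform $K$ there in the hope that the structural insight generalizes.
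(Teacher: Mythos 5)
This statement is Thomassen's conjecture, which the paper does not prove --- it only records the observation that the Bang-Jensen--Yeo strong arc decomposition conjecture would imply it, since each strong spanning subdigraph contains a branching with any prescribed root. Your first paragraph reproduces exactly that implication, and you correctly identify the statement as open rather than claiming a proof, so your proposal matches the paper's (non-)treatment of this conjecture; the remaining speculative discussion is reasonable but, as you acknowledge, does not constitute a proof.
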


Clearly, a strong digraph contains an out-branching (resp., an in-branching) with arbitrary given root so the following conjecture by Bang-Jensen and Yeo would imply Conjecture \ref{conj:CTbranch}.
	
        \begin{conj}\cite{bangC24}
          There exists an integer $K$ such that every $K$-arc-strong digraph has a strong arc decomposition.
	  \end{conj}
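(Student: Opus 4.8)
The plan is to attack the conjecture through the theory of arc-disjoint branchings, which is the natural bridge between arc-strong connectivity and strong decompositions. The starting observation is a classical fact: if $B^{+}$ is an out-branching rooted at a vertex $r$ and $B^{-}$ is an in-branching rooted at the same vertex $r$, then the spanning subdigraph $B^{+}\cup B^{-}$ is strong, since every vertex reaches $r$ along $B^{-}$ and is reached from $r$ along $B^{+}$. Hence it suffices to find, inside a $K$-arc-strong digraph $D$, two out-branchings $B_1^{+},B_2^{+}$ and two in-branchings $B_1^{-},B_2^{-}$, all rooted at one fixed vertex $r$, that are \emph{pairwise} arc-disjoint. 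Setting $A_1$ to be $A(B_1^{+})\cup A(B_1^{-})$ together with all arcs of $D$ not used by the four branchings, and $A_2$ to be $A(B_2^{+})\cup A(B_2^{-})$, then yields a partition of $A$ into two strong spanning subdigraphs, because adding extra arcs to a strong digraph keeps it strong.

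The first ingredient is Edmonds' branching theorem: in a $k$-arc-strong digraph it guarantees, for every vertex $r$, both $k$ arc-disjoint out-branchings rooted at $r$ and, separately, $k$ arc-disjoint in-branchings rooted at $r$. Thus in a $2$-arc-strong digraph one immediately obtains arc-disjoint $B_1^{+},B_2^{+}$ and, independently, arc-disjoint $B_1^{-},B_2^{-}$. The difficulty is that these are two unrelated applications of the theorem, so there is no control over how the out-family interacts with the in-family: an arc used by $B_1^{+}$ may also be used by $B_1^{-}$, and the four branchings need not be pairwise disjoint. Forcing the two families apart is exactly the phenomenon asserted by Thomassen's Conjecture~\ref{conj:CTbranch}, and it is precisely here that the problem becomes genuinely hard.

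My strategy for this obstacle would be to raise the connectivity threshold and argue by an \emph{uncrossing} or directed \emph{splitting-off} reduction. Concretely, for $K$ large enough I would first reserve a $2$-arc-strong spanning subdigraph $D_1$ whose removal leaves $D\setminus A(D_1)$ still $2$-arc-strong; the out/in-branching fact above then extracts a strong subdigraph from each part. To build $D_1$ I would take two arc-disjoint out-branchings rooted at $r$ together with a compatible pair of in-branchings obtained by consistently orienting the residual cuts, controlling the overlap through Menger-type cut arguments and Mader's directed splitting-off theorem so as to reduce to a minimally $K$-arc-strong instance. The principal technical hurdle, which I do not expect to resolve in full generality, is guaranteeing that spanning out-branchings can be chosen arc-disjoint from spanning in-branchings: this is the open core shared by the present conjecture and Thomassen's. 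A realistic proof proposal is therefore to establish the statement first for structured host classes---semicomplete, locally semicomplete, and in particular split digraphs, where the clique/independent-set partition lets one route by hand the scarce arcs incident to the independent set---and only afterwards attempt to lift the bound to a uniform constant $K$.
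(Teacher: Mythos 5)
The statement you were asked to prove is an open conjecture of Bang-Jensen and Yeo; the paper does not prove it and only cites it as motivation (it would imply Thomassen's Conjecture~\ref{conj:CTbranch}, and the paper's actual contribution is to settle the analogous question for the restricted class of split digraphs). Your proposal, by its own admission, does not prove it either, so the honest verdict is that there is a genuine gap --- indeed the proposal concedes that ``the open core'' is left unresolved. Your sufficient condition is sound: four pairwise arc-disjoint branchings $B_1^+,B_2^+,B_1^-,B_2^-$ rooted at a common vertex $r$ would give a strong arc decomposition by taking $A_2=A(B_2^+)\cup A(B_2^-)$ and putting everything else in $A_1$. The correct diagnosis is also there: Edmonds' theorem controls the out-family and the in-family separately but says nothing about their interaction, and making an out-branching arc-disjoint from an in-branching is exactly the content of Conjecture~\ref{conj:CTbranch}.

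The concrete flaw in the proposed workaround is that it is not a reduction but an escalation. You propose, for $K$ large, to extract a $2$-arc-strong spanning subdigraph $D_1$ such that $D\setminus A(D_1)$ is still $2$-arc-strong. But any such decomposition is in particular a partition of $A(D)$ into two strong spanning subdigraphs, i.e.\ it already \emph{is} a strong arc decomposition (and a stronger one than required, since each part is $2$-arc-strong rather than merely strong). So the intermediate statement you would need is at least as hard as the conjecture itself, and neither Menger-type cut arguments nor Mader's splitting-off theorem is known to deliver it; splitting off preserves local arc-connectivity in a single directed multigraph but gives no mechanism for partitioning the arc set into two highly connected spanning parts. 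Your fallback --- prove the statement for structured classes such as semicomplete, locally semicomplete and split digraphs --- is precisely what the literature and this paper do (Theorems~\ref{thm-SD}, \ref{thm-Arcdecsplit} and Corollary~\ref{coro-3arcSAD}), but those results do not yield a uniform constant $K$ for general digraphs, so the conjecture remains open.
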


	Bang-Jensen and Yeo   gave a characterization of semicomplete digraphs with a strong arc decomposition. This characterization implies that every 3-arc-strong semicomplete digraph has a strong arc decomposition.
	
\begin{thm}\cite{bangC24}\label{thm-SD}
	A 2‐arc‐strong semicomplete digraph $D$ has a strong arc decomposition if and only if $D$ is not isomorphic to the digraph $S_4$ depicted in Figure \ref{fig-DMwithoutAD}. Furthermore, a strong arc decomposition of $D$ can be obtained in polynomial time when it exists.
\end{thm}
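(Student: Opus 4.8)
The plan is to first reformulate the statement in a way that removes the bookkeeping of covering all arcs. I claim that a digraph $D=(V,A)$ has a strong arc decomposition if and only if it contains two arc-disjoint spanning strong subdigraphs $H_1,H_2$. Indeed, given such $H_1,H_2$, I may dump every leftover arc of $A\setminus(A(H_1)\cup A(H_2))$ into the first part, since adding arcs to a strong spanning subdigraph keeps it strong; this yields the partition $A_1=A\setminus A(H_2)$ and $A_2=A(H_2)$ with both $(V,A_1)$ and $(V,A_2)$ strong. The converse is immediate, as $A_1,A_2$ are themselves arc-disjoint spanning strong subdigraphs. Hence the theorem reduces to deciding when a $2$-arc-strong semicomplete digraph admits two arc-disjoint spanning strong subdigraphs, and $S_4$ should be exactly the unique obstruction.

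The ``only if'' direction is then a finite verification that $S_4$, although $2$-arc-strong, carries no two arc-disjoint spanning strong subdigraphs. I expect this to come out of a short arc-counting argument: on $n=4$ vertices every spanning strong subdigraph uses at least $n$ arcs, so if $S_4$ has exactly $2n$ arcs then two arc-disjoint spanning strong subdigraphs would have to be two arc-disjoint Hamiltonian cycles partitioning $A(S_4)$, and one checks directly that $S_4$ has no such pair (its few Hamiltonian cycles always leave a complement that fails to connect the two digons of $S_4$).

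For the ``if'' direction I would construct the two subdigraphs from the Hamiltonicity of semicomplete digraphs together with Edmonds' branching theorem. Since $D$ is strong it has a Hamiltonian cycle $C$, a natural first spanning strong subdigraph $H_1=C$, and the core task becomes producing a spanning strong subdigraph $H_2\subseteq D-A(C)$, i.e. showing $D-A(C)$ is strong. Because $D$ is $2$-arc-strong we have $\delta^+(D),\delta^-(D)\ge 2$, so every vertex retains positive in- and out-degree after deleting $C$, and the density of a semicomplete digraph makes strong connectivity of the complement the typical outcome. When $D-A(C)$ is not strong I would switch to the branching formulation: fixing a root $r$, a $2$-arc-strong digraph has two arc-disjoint out-branchings and two arc-disjoint in-branchings rooted at $r$; the aim is to pair one out-branching with one in-branching, arc-disjointly, to form each $H_i$, rerouting the conflicting arcs through the dense semicomplete structure.

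The main obstacle is precisely this simultaneous arc-disjointness of an out-branching and an in-branching within each part, which is the very difficulty that makes Conjecture \ref{conj:CTbranch} hard in general; here the semicomplete structure must be exploited to force that the only irreparable conflict occurs in $S_4$. I would organize the argument as an induction on $|V|$ (deleting a suitable vertex of a Hamiltonian path and re-extending both $H_1$ and $H_2$), with the small base cases on at most four vertices isolating $S_4$ as the unique exception. Finally, polynomiality is inherited from the ingredients: a Hamiltonian cycle in a semicomplete digraph, arc-disjoint branchings via Edmonds' algorithm, and strong-connectivity tests are all computable in polynomial time, so tracing the construction produces the decomposition efficiently whenever $D\not\cong S_4$.
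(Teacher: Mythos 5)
First, a point of reference: the paper does not prove Theorem \ref{thm-SD} at all. It is imported as a black box from \cite{bangC24} (and it is really its multigraph generalization, Theorem \ref{thm-SDM}, that the paper invokes), so your proposal must be measured against the original argument, which is a long and technical one. Your opening reduction --- a strong arc decomposition exists iff there are two arc-disjoint spanning strong subdigraphs, since leftover arcs can be dumped into one part --- is correct, and your handling of the exception is essentially right: $S_4$ has exactly one Hamiltonian cycle, namely $v_1v_2v_3v_4v_1$, whose complement is the two disjoint digons on $\{v_1,v_3\}$ and $\{v_2,v_4\}$, so the arc-counting argument does rule out a decomposition.

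The genuine gap is the entire ``if'' direction, which is the substance of the theorem. The hope that $D-A(C)$ is strong for a Hamiltonian cycle $C$ is unsupported: 2-arc-strongness only gives minimum in- and out-degree 2, so after deleting $C$ each vertex merely retains positive in- and out-degree, which is nowhere near strong connectivity, and nothing in your outline tells you how to choose $C$ so that the complement is strong (if an arbitrary choice worked, the theorem would be almost trivial, and it is the main result of a substantial paper). Your fallback is to pair Edmonds branchings and ``reroute conflicting arcs through the dense semicomplete structure,'' but making an out-branching arc-disjoint from an in-branching is exactly Thomassen's Conjecture \ref{conj:CTbranch} --- open in general and itself a nontrivial theorem for semicomplete digraphs --- and even granting one such pair you would need two pairs that are mutually arc-disjoint, with no mechanism supplied for either the pairing or the rerouting. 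The proposed induction (delete a vertex of a Hamiltonian path and re-extend both $H_1$ and $H_2$) also does not go through as stated: $D-v$ need not be 2-arc-strong, so the induction hypothesis need not apply, and reinserting $v$ requires an in-arc and an out-arc into \emph{each} of the two parts, i.e.\ two in- and two out-neighbours of $v$ distributed suitably, which is not guaranteed. In short, only the easy reformulation and the finite check of $S_4$ are actually proved; the core of the theorem is missing.
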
	

Bang‐Jensen and Huang extended Theorem \ref{thm-SD} to a super class of semicomplete digraphs, namely  locally semicomplete digraphs. A digraph is \textbf{locally semicomplete} if every two vertices with a common out‐ or in‐neighbor have an arc between them. The \textbf{square} of a directed cycle $v_1v_2\cdots v_nv_1$ is obtained by adding an arc from $v_i$ to $v_{i+2}$ for every $i \in [n]$, where $v_{n+1} = v_1$ and $v_{n+2} = v_2$. Note that $S_4$ above is the square of a 4-cycle.

\begin{thm}\cite{bangJCT102}
	A 2‐arc‐strong locally semicomplete digraph $D$ has a strong arc decomposition if and only if $D$ is not the square of an even cycle. Every 3-arc-strong locally semicomplete digraph has a strong arc decomposition and such a decomposition can be obtained in polynomial time.
\end{thm}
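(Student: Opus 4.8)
The plan is to reduce the statement to the semicomplete case via the structure theory of locally semicomplete digraphs. By the classification of Bang-Jensen, Guo, Gutin and Volkmann, every connected locally semicomplete digraph is of exactly one of three types: it is semicomplete; it is \emph{round decomposable}, i.e. $D=R[S_1,\ldots,S_r]$ where $R$ is a round local tournament on $r\ge 2$ vertices and each $S_i$ is a strong semicomplete digraph; or it is one of an explicitly described family of non-round-decomposable, non-semicomplete digraphs. Since a strong arc decomposition can exist only for a $2$-arc-strong digraph, I would first check that in each type the required connectivity is inherited by the quotient $R$ and by the pieces $S_i$, so that the tools available for the blocks can be applied. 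Note that $S_4$ is semicomplete, while $C_n^2$ for $n\ge 5$ is a round local tournament and hence round decomposable.

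In the semicomplete case I would simply invoke Theorem~\ref{thm-SD}: a $2$-arc-strong semicomplete digraph fails to have a strong arc decomposition exactly when it is isomorphic to $S_4$, and $S_4$ is the square of the $4$-cycle. This already produces the claimed exception in the form of a square of an even cycle, and Theorem~\ref{thm-SD} supplies the decomposition in polynomial time whenever one exists.

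The heart of the argument is the round-decomposable case $D=R[S_1,\ldots,S_r]$. A strong round local tournament is Hamiltonian and its arcs are organised along the cyclic order $v_1,\ldots,v_r$, so the idea is to build the two spanning strong subdigraphs $D_1,D_2$ by routing each of them once around the cycle, distributing the arcs leaving each $v_i$ between $D_1$ and $D_2$, while using a strong arc decomposition inside each nontrivial piece $S_i$ to keep both halves strong there. The delicate point is the degenerate subcase in which $R$ is $2$-regular and every $S_i$ is a single vertex, that is $D=C_n^2$. For such a $D$ any strong arc decomposition must split the $2n$ arcs into two spanning strong subdigraphs with $n$ arcs each, hence into two Hamiltonian cycles, which forces every vertex to have out-degree and in-degree exactly one in each part. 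Writing $s_i$ for the arc $v_iv_{i+1}$, $\ell_i$ for $v_iv_{i+2}$, and letting $c(a)\in\{1,2\}$ record the part containing arc $a$, the out-constraints give $c(s_i)\neq c(\ell_i)$ and the in-constraints give $c(s_i)=c(s_{i-1})$, so all short arcs must lie in one part and all long arcs in the other; the long arcs form a single spanning cycle precisely when $n$ is odd. Hence $C_n^2$ has a strong arc decomposition if and only if $n$ is odd, which isolates the squares of even cycles as the only exceptions. I expect this case — the generic construction together with the tight impossibility argument pinning down $C_{2k}^2$ — to be the main obstacle.

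Finally, for the non-round-decomposable, non-semicomplete digraphs I would appeal to Guo's explicit structure theorem, which presents such a $D$ as a semicomplete digraph enlarged by a controlled set of additional vertices and arcs; none of these is a square of an even cycle, so the task is to show they always admit a strong arc decomposition, which I would do by adapting the Hamiltonian-cycle-plus-remainder construction used for semicomplete digraphs to this slightly larger structure. The ``furthermore'' statement is then immediate: $C_{2k}^2$ is $2$-regular and therefore exactly $2$-arc-strong, so no $3$-arc-strong locally semicomplete digraph can be an exception, and every such digraph has a strong arc decomposition. Polynomiality follows throughout because the structure theorem, the Hamiltonian cycles of the round quotients, and each of the constructions above can be computed in polynomial time.
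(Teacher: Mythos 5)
This theorem is quoted by the paper from \cite{bangJCT102}; no proof of it appears in the present text, so your proposal can only be measured against the strategy of that reference, which it does follow in outline: classify connected locally semicomplete digraphs into semicomplete, round decomposable, and a finite list of exceptional non-round-decomposable types, and reduce each case to Theorem \ref{thm-SD}. Your analysis of the exceptional family is correct and complete: since $C_n^2$ is $2$-regular, each part of a strong arc decomposition would have exactly $n$ arcs and hence be a Hamiltonian cycle; the out-degree constraints give $c(s_i)\neq c(\ell_i)$, combining the in-constraint at $v_i$ with the out-constraint at $v_{i-2}$ gives $c(s_{i-1})=c(s_{i-2})$, so all short arcs lie in one part and all long arcs in the other, and the long arcs form a single spanning cycle exactly when $n$ is odd. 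This correctly isolates the squares of even cycles (including $S_4=C_4^2$ on the semicomplete side).

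However, the two constructive cases that carry the real weight are not actually proved. For a general round decomposable $D=R[S_1,\ldots,S_r]$ you propose to ``route each of $D_1,D_2$ once around the cycle, distributing the arcs leaving each $v_i$,'' but this is precisely where the difficulty lies: the pieces $S_i$ need not be $2$-arc-strong even when $D$ is (they may be single vertices or have cut-arcs), so one cannot simply invoke ``a strong arc decomposition inside each nontrivial piece''; the arcs between consecutive pieces must compensate, and one must show that the only configuration in which no such compensation is possible is the degenerate pattern $C_{2k}^2$. Similarly, for the non-round-decomposable, non-semicomplete digraphs you defer entirely to ``adapting'' the semicomplete construction, without specifying how the additional vertices and arcs supplied by the structure theorem are absorbed while keeping both halves strong and arc-disjoint. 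As it stands, the proposal is a correct plan with a correct and self-contained treatment of the exceptional family, but the existence half of the theorem in the two non-semicomplete cases is asserted rather than established, so there is a genuine gap between this sketch and a proof.
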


Let $D$ be a digraph with vertex set $\{v_i: i\in[n]\}$, and let $H_1,\ldots,H_n$ be digraphs which are pairwise vertex-disjoint. The \textbf{composition} $D[H_1,\ldots,H_n]$ is the digraph $Q$ with vertex set $V(H_1)\cup\cdots\cup V(H_n)$ and arc set $(\bigcup_{i=1}^nA(H_i))\cup\{h_ih_j:h_i\in V(H_i),h_j\in V(H_j), v_iv_j\in A(D)\}$. A composition $D[H_1,\ldots,H_n]$ is called \textbf{semicomplete composition} if $D$ is semicomplete.

Sun, Gutin and Ai \cite{sunDM342} gave  a characterization of a subset of  semicomplete compositions with a strong arc decomposition.  Later, Bang-Jensen, Gutin and Yeo solved the problem for all semicomplete compositions as follows, here $C_n, K_n$ and $P_n$ denote the  directed cycle, the complete digraph and the directed path on $n$ vertices, respectively.

\begin{thm}\cite{bangJGT95}
	Let T be a strong semicomplete digraph on $t \geq 2$ vertices and
	let $H_1,\ldots, H_t$ be arbitrary digraphs. Then $Q = T[H_1,\ldots, H_t]$ has a strong arc decomposition if and only if $D$ is 2‐arc‐strong and is not isomorphic to one of the following
	four digraphs: $S_4$, $C_3[\overline{K}_2, \overline{K}_2, \overline{K}_2]$, $C_3[\overline{K}_2, \overline{K}_2, P_2]$ and $C_3[\overline{K}_2, \overline{K}_2, \overline{K}_3]$. In particular, every 3-arc-strong semicomplete composition has a strong arc decomposition.
\end{thm}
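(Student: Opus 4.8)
My plan is to prove the two directions separately: necessity by direct verification of the four exceptions, and sufficiency through a single ``lifting'' construction that reduces everything to the handful of sparse quotients where the genuine difficulty lives.

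For \emph{necessity}, recall that any digraph with a strong arc decomposition is $2$-arc-strong, so it suffices to show that each of the four listed digraphs admits no such decomposition. The case $S_4$ is immediate from Theorem \ref{thm-SD}. For the three compositions over $C_3$ I would use a counting-plus-parity argument. Each is so sparse that in any decomposition both spanning subdigraphs are forced to be nearly minimal: for instance $C_3[\overline{K}_2,\overline{K}_2,\overline{K}_2]$ has exactly $12$ arcs, so each color class must be a Hamiltonian cycle (a strong spanning subdigraph on $6$ vertices with only $6$ arcs). Encoding such a Hamiltonian subdigraph by the three bijections it induces between consecutive blocks, one checks that the subdigraph is a single $6$-cycle precisely when the composition of the three bijections around $C_3$ is a transposition; since the second color class uses the complementary matchings, its composition differs from the first by a swap in $S_2$, and the two cannot be transpositions simultaneously. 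This $S_2$-parity obstruction rules out a decomposition, and the $P_2$ and $\overline{K}_3$ variants follow from the same bookkeeping with the one or two extra arcs accounted for.

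For \emph{sufficiency} the key is a Lifting Lemma: \emph{if the quotient $T$ itself has a strong arc decomposition $A(T)=A_1\cup A_2$, then $Q=T[H_1,\dots,H_t]$ has one for arbitrary nonempty $H_1,\dots,H_t$.} Indeed, color every arc of the complete bipartite set $V(H_i)\to V(H_j)$ with the color of $v_iv_j$, and place all internal arcs of the $H_i$ into $A_1$. Writing $T_i=(V(T),A_i)$, the color-$i$ subdigraph of $Q$ contains $T_i[H_1,\dots,H_t]$, and since $T_i$ is strong and $t\ge 2$ this blow-up is strong: any two vertices are joined by routing along a $T_i$-path between their blocks, and two vertices of the same block via a cycle of $T_i$ through $v_i$ (whose final arc is complete into the block). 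Adding the internal arcs to $A_1$ keeps that side strong. Combining this with Theorem \ref{thm-SD}, and noting that a digraph with a strong arc decomposition is necessarily $2$-arc-strong, the entire statement collapses to the cases in which the strong semicomplete digraph $T$ has \emph{no} strong arc decomposition, namely $T=S_4$ or $T$ not $2$-arc-strong.

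It remains to dispatch these sparse regimes by direct construction, and this is where I expect the real work and the main obstacle to lie. When $T=S_4$ and some block is nontrivial, the extra vertices provide enough room to exhibit two arc-disjoint strong spanning subdigraphs by hand. When $T$ is strong but not $2$-arc-strong, $T$ has a one-arc cut $v_iv_j$, and $2$-arc-strength of $Q$ forces $n_in_j\ge 2$, so the blow-up repairs the bottleneck; one then builds the decomposition explicitly. The genuinely delicate instance is $t=3$ with $T=C_3$, the only place producing non-$S_4$ exceptions: here I would parametrize by the block sizes $(n_1,n_2,n_3)$ together with the internal arcs, pin down exactly which configurations are $2$-arc-strong, and construct decompositions for all of them except the three that reproduce the listed exceptions. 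The crux is that the exception boundary is razor-thin, e.g.\ $C_3[\overline{K}_2,\overline{K}_2,P_2]$ fails while $C_3[\overline{K}_2,\overline{K}_2,K_2]$ succeeds, so the constructions must be essentially optimal and one must verify that the $S_2$-parity obstruction from the necessity argument disappears the instant one leaves the four exceptional digraphs. Finally the ``in particular'' clause is immediate, since each of the four exceptions has minimum out-degree $2$ and hence is not $3$-arc-strong, so every $3$-arc-strong semicomplete composition avoids them and therefore has a strong arc decomposition.
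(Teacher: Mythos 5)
First, note that the paper does not prove this statement: it is quoted from \cite{bangJGT95} as background, so there is no in-paper proof to measure your attempt against and I can only assess it on its own terms. Your Lifting Lemma is correct (with the small caveat that the colour-$2$ class contains only $T_2[\overline{K}_{n_1},\ldots,\overline{K}_{n_t}]$ rather than $T_2[H_1,\ldots,H_t]$, but your routing argument uses only the between-block arcs, so this is harmless), and your necessity arguments for $S_4$, $C_3[\overline{K}_2,\overline{K}_2,\overline{K}_2]$ and $C_3[\overline{K}_2,\overline{K}_2,P_2]$ are sound. However, the $S_2$-parity obstruction does not carry over to $C_3[\overline{K}_2,\overline{K}_2,\overline{K}_3]$ as claimed: once one block has three vertices, the arcs $B_2\to B_3$ and $B_3\to B_1$ in each colour class are no longer perfect matchings (each class receives three of the six arcs, i.e.\ a surjection of a $3$-set onto a $2$-set), so the ``composition of bijections'' bookkeeping breaks down and a separate, though still finite, degree analysis is required there.

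The more serious gap is in sufficiency. Your reduction disposes only of the case where the quotient $T$ is $2$-arc-strong and not isomorphic to $S_4$. What remains --- $T\cong S_4$ with at least one nontrivial block, and the infinite family of strong semicomplete quotients $T$ that are not $2$-arc-strong but whose blow-up $Q$ is $2$-arc-strong because every one-arc bottleneck $v_iv_j$ is widened to $n_in_j\ge 2$ crossing arcs --- is precisely where the content of the theorem lies, and your proposal explicitly defers it (``this is where I expect the real work and the main obstacle to lie''). This is not a routine finite check: $T$ may have arbitrarily many cut-arcs arranged along its strong decomposition, and one must build two arc-disjoint strong spanning subdigraphs of $Q$ threading through all of these bottlenecks simultaneously; the boundary cases you yourself identify ($C_3[\overline{K}_2,\overline{K}_2,P_2]$ failing while $C_3[\overline{K}_2,\overline{K}_2,K_2]$ succeeds) show how delicate the constructions must be. As it stands the argument establishes only the strictly weaker statement that every semicomplete composition with a $2$-arc-strong quotient other than $S_4$ has a strong arc decomposition, and the plan for the remaining cases is a statement of intent rather than a proof.
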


A  \textbf{split digraph}  is a digraph whose vertex set is a disjoint union of two non-empty sets $V_1$ and $V_2$ such that $V_1$ is an independent set and the subdigraph induced by $V_2$ is semicomplete.  We use the notation $D=(V_1,V_2;A)$ to denote  a split digraph $D$.
 This class of digraphs has not been studied in many papers. In fact, our definition of a split digraph is different from that in \cite{hellDAM216,lamarDM312} but is very similar to the definition used in \cite{aiDM346}. Recall that an undirected graph is a split graph if its vertex set can be partitioned into an independent set and a clique. So by our definition, a split digraph is simply any digraph whose underlying undirected graph is a split graph. It should be noted that problems for split digraphs are often  much harder than the corresponding problem for semicomplete digraphs. One example is the hamiltonian cycle problem which is easy for semicomplete digraphs (a semicomplete digraph has a hamiltonian cycle if and only if it is strongly connected) but the complexity of the hamiltonian cycle problem is open already for split digraphs with only two vertices in the independent set. For split digraphs which become semicomplete after deletion of some vertex, the hamiltonian cycle problem is equivalent to deciding the existence of a hamiltonian path with prescribed starting and ending vertices and hence polynomial (but highly non-trivial) by \cite{bangJA13}.%See the discussion on page 290 in \cite{bang2009}.
%It also follows from the previous reference that for  general split digraphs the hamiltonian cycle problem is NP-complete.

It was shown in \cite{bangSJDM5} that it is NP-complete to decide for a given semicomplete digraph $D=(V,A)$ and a subset $A'\subset A$ whether $D$ has a hamiltonian cycle containing all arcs of $A'$. This immediately implies the following (for each arc $uv\in A'$ add a new vertex $x_{uv}$ and arcs $ux_{uv},x_{uv}v$).

\begin{thm}
 The hamiltonian cycle problem is NP-complete for split digraphs.
\end{thm}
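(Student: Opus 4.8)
The plan is to give a polynomial-time reduction from the arc-constrained hamiltonian cycle problem on semicomplete digraphs, which is NP-complete by \cite{bangSJDM5}, to the hamiltonian cycle problem on split digraphs. Membership in NP is immediate, since a hamiltonian cycle is a certificate of polynomial size that can be checked in polynomial time; so the work lies entirely in the hardness reduction.

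Given an instance consisting of a semicomplete digraph $D=(V,A)$ and a subset $A'\subseteq A$, I would construct a digraph $D'$ exactly as indicated in the parenthetical remark: for every arc $uv\in A'$ introduce a fresh vertex $x_{uv}$ together with the two arcs $ux_{uv}$ and $x_{uv}v$ (it is immaterial whether the original arc $uv$ is retained, since a hamiltonian cycle of $D'$ must in any case visit $x_{uv}$). Set $V_2=V$ and $V_1=\{x_{uv}: uv\in A'\}$. Each new vertex $x_{uv}$ has exactly one in-neighbour ($u$) and one out-neighbour ($v$) in $V_2$ and is non-adjacent to every other new vertex, so $V_1$ is independent while $V_2$ induces the semicomplete digraph $D$. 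Hence $D'$ is a split digraph with independent set $V_1$ and semicomplete part $V_2$, and the construction is clearly computable in polynomial time.

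The correctness rests on the single structural observation that in any hamiltonian cycle of $D'$ the vertex $x_{uv}$ can only be entered from $u$ and left towards $v$, so it forces the subpath $u\to x_{uv}\to v$. For the forward direction, if $D$ has a hamiltonian cycle $C$ using every arc of $A'$, then replacing each traversal $u\to v$ of an arc $uv\in A'$ by $u\to x_{uv}\to v$ produces a hamiltonian cycle of $D'$. For the converse, given a hamiltonian cycle $C'$ of $D'$, contracting each forced subpath $u\to x_{uv}\to v$ back to the arc $uv$ yields a closed spanning walk on $V$ in which every vertex still has in- and out-degree exactly one, that is, a hamiltonian cycle of $D$ containing all arcs of $A'$.

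The only point requiring care---rather than a genuine obstacle---is to confirm that the two directions stay consistent when several arcs of $A'$ share a tail or a head. If, say, $uv_1,uv_2\in A'$ with $v_1\neq v_2$, then no hamiltonian cycle of $D'$ can route $u$ to both $x_{uv_1}$ and $x_{uv_2}$, so $D'$ has no hamiltonian cycle; this matches the original instance, in which no hamiltonian cycle can use two distinct out-arcs of $u$, so it is likewise a ``no'' instance (the symmetric argument handles a shared head). With this check in place the equivalence holds in both directions, and the NP-hardness of the semicomplete arc-constrained problem transfers to split digraphs, completing the proof.
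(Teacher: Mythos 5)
Your reduction is exactly the one the paper sketches in its parenthetical remark (subdivide each arc of $A'$ by a new vertex $x_{uv}$ and invoke the NP-completeness of the arc-constrained hamiltonian cycle problem on semicomplete digraphs from \cite{bangSJDM5}), and your verification of both directions is correct. The proposal matches the paper's proof, just with the routine details written out.
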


As mentioned above the following conjecture is open already for $k=2$ (note that a digraph obtained as described below may not be a split digraph as we may add arcs between new vertices).

\begin{conj}\cite{bangCN115}
  For every fixed integer $k$ there exists a polynomial algorithm for deciding if there is a hamiltonian cycle in a given digraph which is obtained from a semicomplete digraph by adding $k$ new vertices and some arcs.
\end{conj}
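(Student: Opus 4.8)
The plan is to reduce the Hamiltonicity question to a \emph{spanning path-partition} problem inside the semicomplete part and then to attack that problem by generalizing the prescribed-endpoints Hamiltonian path machinery. Write $D$ as a semicomplete digraph $S=(V,A)$ together with an added set $U=\{u_1,\dots,u_k\}$ and some arcs incident with $U$ (possibly including arcs inside $U$). Observe that if $C$ is a Hamiltonian cycle of $D$, then reading $C$ cyclically and deleting the vertices of $U$ breaks $C$ into maximal runs of $V$-vertices; each such run is a directed path of $S$, and since every run is immediately preceded on $C$ by a distinct vertex of $U$, there are at most $k$ of them. Hence $C$ induces a partition of $V$ into $m\le k$ vertex-disjoint directed paths of $S$, together with a cyclic pattern describing how these paths and the vertices of $U$ alternate. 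First I would enumerate this pattern: the cyclic order of $U$ on $C$ (at most $(k-1)!$ choices) and, for each consecutive pair of $U$-vertices in that order, whether they are joined directly by an arc of $D$ or by one of the $V$-paths (at most $2^{k}$ choices). For fixed $k$ this is only a constant number of guesses.

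Each guess leaves the following kernel problem in $S$: partition $V$ into $m\le k$ vertex-disjoint directed paths $P_1,\dots,P_m$ such that the first vertex of $P_j$ is an out-neighbour in $D$ of a prescribed $u\in U$ and the last vertex of $P_j$ is an in-neighbour of a prescribed $u'\in U$ (the two $U$-vertices bounding the $j$-th gap), while for every ``empty'' gap the required $U$--$U$ arc is present. Next I would remove the neighbour-set constraints by brute force: there are only $2m\le 2k$ path endpoints, so enumerating which vertices of $V$ play these roles costs $n^{O(k)}$, and each choice respecting the prescribed neighbourhoods turns the kernel into the \textbf{spanning $m$-linkage problem} for semicomplete digraphs, namely finding vertex-disjoint paths with given terminals $s_j\rightsquigarrow t_j$ that together cover all of $V$. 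When $k=1$ this is exactly the prescribed-endpoints Hamiltonian path problem, solvable in polynomial time by \cite{bangJA13}, which recovers the special case quoted above.

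The crux, and the step I expect to be the main obstacle, is solving the spanning $m$-linkage in a semicomplete digraph in polynomial time for each fixed $m$. The tempting shortcut --- adding connector arcs $t_j\to s_{j+1}$ to splice the $m$ paths into one and then invoking the $k=1$ algorithm --- is doomed, because it amounts to forcing a prescribed set of arcs into a Hamiltonian structure, and deciding Hamiltonicity of a semicomplete digraph through a prescribed arc set is NP-complete \cite{bangSJDM5}; so the linkage must be built directly. The approach I would pursue is to first compute an ordinary (non-spanning) $m$-linkage with the given terminals, using the polynomial-time algorithms known for vertex-disjoint paths in semicomplete digraphs with a fixed number of terminals, and then to absorb the leftover vertices by insertion: any vertex off the current paths is adjacent to every path vertex, so along each path there is a ``down--up'' transition permitting an interior insertion unless that vertex is forced toward a path end. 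Because the endpoints are fixed, such end-forced vertices cannot simply be appended, and one must reroute them among the $m$ paths; organizing these reroutings --- presumably through the strong-component order of $S$ and a dynamic program that carries the bounded interface of $2m$ terminals across a linear layout --- is where the real work lies and where a genuinely new structural argument extending \cite{bangJA13} from one path to $m$ paths will be needed. Granting a polynomial algorithm for the spanning $m$-linkage, the enumeration above runs in $n^{O(k)}$ time and settles the conjecture.
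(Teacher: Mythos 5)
This statement is not a theorem of the paper at all: it is Conjecture 4, quoted from \cite{bangCN115}, and the paper explicitly records that it is \emph{open already for $k=2$}. So there is no proof in the paper to compare against, and your task was in effect to settle an open problem. Your reduction itself is sound as far as it goes: a Hamiltonian cycle of $D$ does decompose, after deleting the $k$ added vertices, into $m\le k$ vertex-disjoint paths of the semicomplete part, the cyclic pattern and the $2m\le 2k$ endpoints can be enumerated in $n^{O(k)}$ time for fixed $k$, and the case $m=1$ is exactly the prescribed-endpoints Hamiltonian path problem solved in \cite{bangJA13} (this is precisely how the paper handles split digraphs that become semicomplete after deleting one vertex). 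You are also right that the splice-into-one-path shortcut is blocked by the NP-completeness of Hamiltonicity through prescribed arcs \cite{bangSJDM5}.

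The genuine gap is the step you yourself flag as ``the crux'': a polynomial algorithm for the \emph{spanning} $m$-linkage problem in semicomplete digraphs for fixed $m\ge 2$. Your reduction shows the conjecture is essentially equivalent to that problem; it does not solve it, and that problem is exactly the open content of the conjecture (for $k=2$ one already needs spanning $2$-linkages, matching the paper's remark that $k=2$ is open). Your proposed attack --- compute a non-spanning linkage and then absorb leftover vertices by insertion --- breaks down at the point you gesture past: a leftover vertex $v$ is adjacent to every path vertex, but if $v$ dominates (or is dominated by) \emph{all} vertices of every path there is no ``down--up'' transition anywhere, and since all $2m$ endpoints are prescribed, $v$ cannot be appended either; rerouting such vertices across paths is not a local operation, and ``a dynamic program carrying the bounded interface across a linear layout'' is a hope, not an argument --- no analogue of the structural theory of \cite{bangJA13} for $m\ge 2$ paths currently exists. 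So the proposal should be read as a correct polynomial-time Turing reduction of the conjecture to spanning $m$-linkage in semicomplete digraphs, with the conjecture itself left unproven.
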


Based on this conjecture and the discussion on page 290 in \cite{bang2009} we conjecture the following.

\begin{conj}
  For every fixed integer $k$ the hamiltonian cycle problem is polynomial for the class of split digraphs $D=(V_1,V_2;A)$ where the independent set $V_1$ has size at most $k$.
  \end{conj}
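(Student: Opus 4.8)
The plan is to reduce the hamiltonian cycle problem on a split digraph $D=(V_1,V_2;A)$ with $|V_1|=k$ to a bounded number of spanning path-system problems on the semicomplete digraph $H=D[V_2]$, and then to attack those. The starting point is a structural observation: since $V_1$ is independent, in any hamiltonian cycle $C$ of $D$ the predecessor and the successor of each vertex of $V_1$ lie in $V_2$. Hence deleting $V_1$ from $C$ leaves exactly $k$ pairwise vertex-disjoint directed paths inside $H$ whose union spans $V_2$ (in particular $|V_2|\ge k$ is necessary, and if $|V_2|<k$ there is no hamiltonian cycle). Conversely, a hamiltonian cycle of $D$ is obtained precisely by fixing a cyclic order $u_1,\dots,u_k$ of $V_1$, partitioning $V_2$ into $k$ nonempty vertex-disjoint paths $P_1,\dots,P_k$, and requiring that $P_i$ starts at a vertex $s_i\in N^+(u_i)\cap V_2$ and ends at a vertex $t_i\in N^-(u_{i+1})\cap V_2$ (indices mod $k$). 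This reduces the problem to deciding the existence of such a constrained spanning system of $k$ paths in $H$.

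First I would dispose of the combinatorial overhead, which is exactly where the hypothesis that $k$ is fixed is used. There are at most $(k-1)!$ cyclic orders of $V_1$, and for each order at most $|V_2|^{2k}$ ways to prescribe the ordered endpoint pairs $(s_1,t_1),\dots,(s_k,t_k)$ compatible with the out- and in-neighbourhoods of the $u_i$. As $k$ is constant, iterating over all of these is polynomial. It therefore suffices to solve the following core problem in polynomial time for a semicomplete digraph $H$: given $k$ ordered pairs $(s_i,t_i)$ of vertices, decide whether $H$ can be partitioned into $k$ vertex-disjoint paths $P_1,\dots,P_k$ with $P_i$ running from $s_i$ to $t_i$. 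I will call this the \emph{spanning $k$-linkage problem}.

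The spanning $k$-linkage problem is the crux, and I expect it to be the main obstacle. For $k=1$ it is exactly the problem of finding a hamiltonian path with prescribed endpoints in a semicomplete digraph, which is polynomial by \cite{bangJA13}; this recovers the already-known case where $D$ becomes semicomplete after deleting a single vertex. For general fixed $k$ the \emph{non-spanning} $k$-linkage problem (find $k$ disjoint paths with prescribed terminals, not necessarily covering every vertex) is known to be polynomial for semicomplete digraphs, so the genuinely new difficulty is the spanning requirement: the $k$ paths must be forced to cover all of $V_2$ simultaneously. I would try two complementary routes. The first is to extend the path-merging and augmentation arguments underlying the $k=1$ result of \cite{bangJA13} to several paths at once, exploiting the fact that between any two still-uncovered vertices of $H$ there is an arc, which should allow leftover vertices to be spliced into an existing path. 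The second is to reduce spanning $k$-linkage to non-spanning linkage in an auxiliary semicomplete digraph obtained by contracting already-forced segments, and to argue that a non-spanning solution can always be rerouted to absorb the remaining vertices because $H$ is semicomplete.

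The reason the naive reductions do not immediately close the argument — and hence why the statement is only conjectured — is that the most natural way to chain the $k$ paths together, by inserting $k-1$ bridge vertices between consecutive endpoints $t_i$ and $s_{i+1}$, destroys semicompleteness: it reduces spanning $k$-linkage in the semicomplete digraph $H$ to a single hamiltonian path (spanning $1$-linkage) in $H$ augmented by these bridge vertices, but that augmented digraph is itself a split digraph whose independent set has size $k-1$. Thus the reduction returns a split-digraph path problem of the same flavour rather than a strictly smaller semicomplete one, so the induction on $k$ does not descend cleanly. Making the spanning linkage work \emph{inside} the semicomplete digraph, without reintroducing independent vertices, is precisely the gap that would have to be bridged to settle the conjecture.
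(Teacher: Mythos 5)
The statement you are trying to prove is posed in the paper as a \emph{conjecture}; the paper offers no proof of it, so there is nothing to compare your argument against except the paper's surrounding discussion. Your reduction is correct and is exactly the one the authors have in mind: since $V_1$ is independent, every vertex of $V_1$ on a hamiltonian cycle has both neighbours in $V_2$, so after guessing a cyclic order of $V_1$ and the $2k$ attachment vertices (polynomially many choices for fixed $k$) the problem becomes: partition the semicomplete digraph $D\left\langle V_2\right\rangle$ into $k$ vertex-disjoint paths with prescribed ordered endpoint pairs. For $k=1$ this is hamiltonian path with prescribed ends in a semicomplete digraph, polynomial by the cited result of Bang-Jensen, Manoussakis and Thomassen, which recovers the one case the paper states is known.

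The genuine gap is the one you yourself flag: you do not give, and cannot cite, a polynomial algorithm for the spanning $k$-linkage (path-partition with prescribed terminals) problem in semicomplete digraphs for any $k\ge 2$. Neither of your two proposed routes is carried out. The first (extending the path-merging machinery of the $k=1$ algorithm to several paths simultaneously) is a research programme, not an argument; the second (solve the non-spanning linkage and then absorb leftover vertices) fails as stated, because a leftover vertex $v$ may have all its arcs oriented so that it cannot be spliced into any $P_i$ without breaking the prescribed endpoints --- semicompleteness guarantees an arc between $v$ and each path vertex, but not one in a usable direction, and absorbing $v$ into one path can force reroutings that cascade across all $k$ paths. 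Your observation that the natural bridge-vertex reduction returns a split-digraph instance of the same size class, so that no induction on $k$ descends, is accurate and is essentially why the paper (and the earlier conjecture of Bang-Jensen and Gutin it is modelled on) leaves even the case $k=2$ open. In short: the reduction is sound, the enumeration is sound, but the core algorithmic problem is untouched, so this is a restatement of the conjecture rather than a proof of it.
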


In this paper, we study strong arc decompositions of split digraphs. Our main result is the following.

\begin{thm}\label{thm-Arcdecsplit}
	Let $D=(V_1,V_2;A)$ be a 2-arc-strong split digraph such that $V_1$ is an independent set and the subdigraph induced by $V_2$ is semicomplete. If every vertex of $V_1$ has both out- and in-degree at least 3 in $D$, then $D$ has a strong arc decomposition.
\end{thm}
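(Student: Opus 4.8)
The plan is to reformulate the goal and then separate a generic case, dispatched immediately by Theorem~\ref{thm-SD}, from a short list of exceptional cases that carry all the real content. First I would reduce the statement to producing two \emph{arc-disjoint} spanning strong subdigraphs $H_1,H_2$ of $D$. This suffices: setting $A_2=A(H_2)$ and $A_1=A\setminus A_2\supseteq A(H_1)$ yields a partition in which $D_2=H_2$ is strong and $D_1\supseteq H_1$ is strong, since adding arcs to a spanning strong subdigraph cannot destroy strong connectivity. Alongside this I would record an \emph{append lemma}: if $H$ is strong and we add a new vertex $v$ with at least one arc into $v$ and at least one arc out of $v$ (all joining $v$ to $V(H)$), then $H+v$ is again strong, because every vertex still reaches $v$ through an in-neighbour of $v$, and $v$ reaches every vertex through an out-neighbour.

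These two observations settle the generic situation. If the semicomplete digraph $D[V_2]$ is $2$-arc-strong and not isomorphic to $S_4$, then by Theorem~\ref{thm-SD} it has a strong arc decomposition $(B_1,B_2)$ into spanning strong subdigraphs of $D[V_2]$. For each $v\in V_1$ I would then distribute its incident arcs: since $d^-_D(v)\ge 3\ge 2$ and $d^+_D(v)\ge 3\ge 2$ and all these arcs join $v$ to $V_2$, I can place at least one in-arc and at least one out-arc of $v$ into each colour class. Appending the vertices of $V_1$ one at a time and invoking the append lemma repeatedly shows both colour classes remain strong; note that only in- and out-degree $\ge 2$ is used here.

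The crux is therefore the case where $D[V_2]$ has \emph{no} strong arc decomposition, which by Theorem~\ref{thm-SD} means either $D[V_2]=S_4$ or $D[V_2]$ is not $2$-arc-strong (in particular it may fail to be strong, in which case its strong components form a transitive tournament and the source and sink components can only be reached from, respectively left towards, $V_1$). Here I would not try to decompose $D[V_2]$ in isolation; instead I would choose a subset $V_1'\subseteq V_1$ and directly construct two arc-disjoint spanning strong subdigraphs of $D[V_2\cup V_1']$, using the high-degree vertices of $V_1'$ to route the extra paths that $D[V_2]$ itself cannot supply — for instance replacing a connectivity-critical arc $x_ix_j$ inside $V_2$ by a detour $x_i\to v\to x_j$ through some $v\in V_1'$. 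The remaining vertices $V_1\setminus V_1'$ are then appended exactly as in the generic case. This is precisely where the hypothesis of degree at least $3$ (rather than $2$) enters: such a $v$ may be forced to donate one in-arc and one out-arc to repair the deficiency of $D[V_2]$, and the third arc in each direction is the slack that still lets $v$ both reach, and be reached within, each colour class.

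The main obstacle, which I expect to demand a careful case analysis, is exactly this repair step. Because $V_1$ is an independent set, every detour through $V_1$ must enter and leave $V_1$ in a single step, so all the supplementary connectivity has to be manufactured from the bipartite arc pattern between $V_1$ and $V_2$ while keeping the two subdigraphs arc-disjoint. Verifying that the degree-$3$ condition, combined with $2$-arc-strongness of $D$, always provides enough such arcs — especially in the tight configurations $D[V_2]=S_4$, $D[V_2]$ a directed cycle, and the non-strong cases with several strong components in $V_2$ — is the technical heart of the argument, and I would organise it by the structure (number and sizes of the strong components) of $D[V_2]$.
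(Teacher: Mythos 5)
Your generic case is sound and matches the paper's easy reductions: when $D\langle V_2\rangle$ is $2$-arc-strong and not exceptional, decompose it by Theorem~\ref{thm-SD} and append each $v\in V_1$ with one in/out pair per colour class (this is Lemma~\ref{(D-X)->D} in the paper). But the proposal has a genuine gap: everything you defer to ``a careful case analysis'' is in fact the entire technical content of the theorem, and your sketch of the repair step is too weak to carry it. First, when $D\langle V_2\rangle$ is not strong there need not be any ``connectivity-critical arc $x_ix_j$'' to replace by a detour $x_i\to v\to x_j$: there may be no arc at all from the terminal strong component back to the initial one, and the required connections are long paths alternating between $V_2$ and $V_1$, two of which must moreover be chosen arc-disjoint (their existence uses $2$-arc-strongness of all of $D$, not just degree conditions). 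Second, your plan to ``directly construct two arc-disjoint spanning strong subdigraphs of $D[V_2\cup V_1']$'' has no tool behind it: that digraph is again a split digraph, not semicomplete, so Theorem~\ref{thm-SD} does not apply to it and you are back at the original problem. The paper resolves exactly this by \emph{splitting off} the detour paths into a semicomplete directed \emph{multigraph} on $V_2$, to which the multigraph version (Theorem~\ref{thm-SDM}, with four exceptional digraphs rather than one) applies; your proposal never produces an object covered by a known decomposition theorem.

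Third, even granting the splitting-off device, it is not automatic that the augmented object is $2$-arc-strong. The paper needs the nice-decomposition machinery of $S_1$ and $S_k$, a classification of which arcs can remain cut-arcs after splitting (Lemma~\ref{lem-cutarc}), and an extremal choice of the path family minimising the number of vertices of out-/in-degree one, together with several exchange arguments (Lemmas~\ref{lem-tx}--\ref{lem-S1cycle}) showing that a bad configuration forces $S_k$ (or $S_1$) to be a $3$-cycle whose three vertices share a single common out-neighbour (in-neighbour) in $V_1$ --- a case that then requires a third path through that vertex and a separate argument that three splittings at one vertex of $V_1$ are still liftable. None of this is visible from your outline, and the degree-$3$ hypothesis is used in these exchange arguments in a more delicate way than ``one spare arc in each direction.'' So the approach is pointed in the right direction, but as written it is a plan for a proof rather than a proof.
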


\begin{coro}\label{coro-3arcSAD}
	Every 3-arc-strong split digraph has a strong arc decomposition.
\end{coro}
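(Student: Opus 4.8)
The plan is to prove Theorem~\ref{thm-Arcdecsplit}, the main result, since Corollary~\ref{coro-3arcSAD} follows almost immediately: if $D=(V_1,V_2;A)$ is $3$-arc-strong, then it is in particular $2$-arc-strong, and moreover every vertex $v\in V_1$ must have in-degree at least $3$ and out-degree at least $3$. Indeed, if some $v\in V_1$ had out-degree at most $2$, then deleting those at most $2$ out-arcs would leave $v$ as a source, contradicting strong connectivity after removing $2$ arcs; the same argument with in-arcs handles the in-degree. Hence the hypotheses of Theorem~\ref{thm-Arcdecsplit} are met and the corollary is immediate, so I will focus the entire effort on the theorem.

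For the theorem itself, I would start from the semicomplete digraph $D\langle V_2\rangle$ and try to bootstrap a strong arc decomposition of the whole split digraph from a strong arc decomposition (or a near-decomposition) of the semicomplete part, using Theorem~\ref{thm-SD} as the engine. The rough idea is: since $D$ is $2$-arc-strong, one expects $D\langle V_2\rangle$ to be ``almost'' $2$-arc-strong, and by Theorem~\ref{thm-SD} a $2$-arc-strong semicomplete digraph that is not $S_4$ splits into two strong spanning subdigraphs $D_1^{V_2},D_2^{V_2}$ of $V_2$. The task then reduces to distributing the arcs incident with $V_1$ (each $v\in V_1$ has $\ge 3$ in-arcs and $\ge 3$ out-arcs, all going to/from $V_2$) between the two parts $A_1,A_2$ so that (i) each $v\in V_1$ gets at least one in-arc and at least one out-arc in \emph{each} part, guaranteeing $v$ is neither a source nor a sink in either $D_i$, and (ii) the resulting two digraphs remain strong. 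Condition (i) is exactly where the degree-$3$ hypothesis is used: with three in-arcs and three out-arcs we can always give each side at least one of each while retaining freedom to place the remaining arcs.

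The main structural work is in choosing the distribution so that each $D_i$ stays strong. Here the key observation is that once the $V_2$-part $D_i^{V_2}$ is already strong, attaching a vertex $v\in V_1$ preserves strongness as long as $v$ has at least one out-neighbor and at least one in-neighbor in $D_i$ that lie in the already-strong component: any vertex with both a path \emph{into} the strong core and a path \emph{out} of it is itself in the unique strong component. So provided $D_i^{V_2}$ is strong and every $v\in V_1$ has an in-arc and an out-arc assigned to $A_i$ (both incident to $V_2$), the digraph $D_i$ is automatically strong. Thus the problem collapses to a pure distribution problem on the arcs between $V_1$ and $V_2$: orient a splitting of each vertex's in- and out-arcs so that both colour classes are nonempty on each side. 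This is trivially solvable vertex-by-vertex given three in- and three out-arcs, and the arcs inside $V_2$ need only to be partitioned so that $D_1^{V_2},D_2^{V_2}$ are strong, which Theorem~\ref{thm-SD} provides (in polynomial time).

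The hard part will be the exceptional and degenerate cases, and these are where a careful proof must spend its energy. First, $D\langle V_2\rangle$ need not itself be $2$-arc-strong even when $D$ is: the arc-connectivity of $D$ can be ``carried'' partly through the vertices of $V_1$, so a minimum arc-cut of $D\langle V_2\rangle$ might have size $1$ while $D$ is $2$-arc-strong via detours through $V_1$. Handling this requires either routing some $V_1$ vertices to repair the connectivity of the $V_2$-parts, or arguing directly about arc-cuts of $D$ that separate $V_2$. Second, the exceptional case $D\langle V_2\rangle\cong S_4$ (the square of the $4$-cycle, which has no strong arc decomposition) must be treated separately; here I would exploit the extra arcs through $V_1$ to break the obstruction, showing that the presence of even a single high-degree vertex in $V_1$ destroys the $S_4$ obstruction. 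A third subtlety is small $|V_2|$: when $V_2$ has very few vertices, the semicomplete machinery gives little room, and the decomposition must lean almost entirely on the $V_1$-incident arcs, so separate ad hoc arguments or a direct flow/matching formulation (finding two arc-disjoint strong spanning subdigraphs via a combination of Edmonds' branching theorem and the splitting construction) are likely needed. I anticipate the bulk of the proof, and its genuine difficulty, to lie in simultaneously guaranteeing strong connectivity of \emph{both} parts while the connectivity of $D$ is supported partly by $V_1$; the clean ``attach vertices to a strong core'' argument is the easy backbone, but making the core strong in both parts when $D\langle V_2\rangle$ is only weakly connected is the real obstacle.
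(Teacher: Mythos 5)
Your derivation of the corollary from Theorem~\ref{thm-Arcdecsplit} is exactly the paper's (implicit) argument and is correct: a $3$-arc-strong digraph has minimum in- and out-degree at least $3$ at every vertex (your source/sink argument), so the hypotheses of the theorem are satisfied. If you are permitted to cite Theorem~\ref{thm-Arcdecsplit}, you are done, and nothing more is needed.

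However, you chose to make a proof of Theorem~\ref{thm-Arcdecsplit} itself part of your proposal, and there the sketch has a genuine gap. Your backbone --- decompose $D\langle V_2\rangle$ via Theorem~\ref{thm-SD} into two strong spanning subdigraphs of $V_2$ and then distribute the $V_1$-incident arcs so each vertex of $V_1$ gets an in-arc and an out-arc in each part --- only works when $D\langle V_2\rangle$ is already $2$-arc-strong. You correctly identify that it need not be (the connectivity of $D$ can be routed through $V_1$), but you leave that case as ``the real obstacle'' without a method, and it is precisely the case the whole paper is about. When $D\langle V_2\rangle$ is non-strong or has a cut-arc, \emph{no} partition of $A(D\langle V_2\rangle)$ into two strong spanning subdigraphs of $V_2$ exists, so no amount of clever distribution of the $V_1$-arcs afterwards can repair both parts; the $V_1$-arcs must be used to \emph{create} connectivity inside the $V_2$-part before decomposing. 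The paper does this by splitting off paths through $V_1$ (replacing $ut,tv$ by a new arc $uv$) along carefully chosen $(X,Y)$-paths from the terminal to the initial component, producing a semicomplete directed \emph{multigraph} $D^{\ast}\langle V_2\rangle$ that is $2$-arc-strong; it then needs the multidigraph characterization (Theorem~\ref{thm-SDM}, with the extra exceptions $S_{4,1},S_{4,2},S_{4,3}$), not just Theorem~\ref{thm-SD}, and a lifting argument (Lemma~\ref{lem-Dstar}) to pull the decomposition back to $D$. The degree-$3$ hypothesis is used not merely to give each side one in- and one out-arc at each $V_1$-vertex, but to guarantee that after splitting off up to two pairs at a vertex of $V_1$ there is still a spare in/out pair left for the lifting step, and to drive the minimum-counterexample arguments (Lemmas~\ref{lem-cutarc}--\ref{lem-S1cycle}) that force the $2$-arc-strong outcome. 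None of this machinery is present, even in outline, in your sketch, so as a proof of the theorem it is incomplete at its central point.
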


The paper is organized as follows.  Section \ref{sec-prelim} provides additional terminology and notation and preliminary results. The proof of our main result, Theorem \ref{thm-Arcdecsplit}, is given in Section \ref{sec-proof}. In Section \ref{sec-GP}, we show some results on arc-disjoint in- and out-branchings in split digraphs and we provide an infinite family of split digraphs which shows that vertex-connectivity 2 is not sufficient to guarantee the existence of a strong arc decomposition in split digraphs (Corollary \ref{coro-2strNoSAD}). Finally we list some open problems in Section \ref{sec-problem}.

\section{Preliminaries}\label{sec-prelim}

Let $D$ be a directed multigraph and let $X$ be a subset of $V(D)$. We use $D\left\langle X\right\rangle$ to denote the directed multigraph induced by $X$. Let $D-X = D\left\langle V\backslash X \right\rangle$. We often identify a subdigraph $H$ of $D$ with its vertex set $V(H)$. For example, we write $D-H$ and $v\in H$ instead of $D-V(H)$ and $v\in V(H)$. 

In this paper a cycle and a path always means a directed cycle and path. For subsets $X,Y$ of $V(D)$, a path $P$ is an $(X,Y)$-path if it starts at a vertex $x$ in $X$ and ends at a vertex $y$ in $Y$ such that $V(P)\cap(X\cup Y)=\{x,y\}$. For a path $P$, we use $P[x,y]$ to denote the subpath of $P$ from $x$ to $y$. For every digraph $D$, we can label its strong components $C_1,\ldots{},C_p$ ($p\geq 1$) such that there is no arc from $C_j$ to $C_i$ when $j>i$. We call such an ordering an \textbf{acyclic ordering} of the strong components of $D$. For a semicomplete digraph $D$  it is easy to see that the ordering  $C_1,\ldots{},C_p$ ($p\geq 1$) is unique and we call $C_1$ (resp., $C_p$) the {\bf initial (resp., terminal)} strong component of $D$.

\subsection{Nice decompositions of strong digraphs}

A \textbf{vertex decomposition} of a digraph $D$ is a partition of its vertex set $V(D)$ into disjoint sets $(U_1,\ldots,U_l)$ ($l\geq 1$). The \textbf{index} of a vertex $v$ in the decomposition, denoted by $ind(v)$, is the index $i$ such that $v\in U_i$.  
An arc $xy$ is called a \textbf{backward arc} if $ind(x)>ind(y)$. The following decomposition, introduced in \cite{bangJGT102}, plays an important role in our proof of Theorem \ref{thm-Arcdecsplit}. A \textbf{nice decomposition} of a digraph $D$ is a vertex decomposition such that $D\left\langle U_i\right\rangle$ is strong for all $i\in[l]$ and the set of cut‐arcs of $D$ is exactly the set of backward arcs.% Thus the next observation follows from the defintion of a nice decomposition.

\begin{thm}\cite{bangJGT102}\label{thm-nicedecom}
	Every strong semicomplete digraph $D$ of order at least 4 admits a unique nice decomposition. Furthermore, the nice decomposition can be constructed in polynomial time.
\end{thm}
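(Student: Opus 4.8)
The plan is to describe the parts and their order explicitly, and then to verify the two defining properties of a nice decomposition together with uniqueness. Recall that an arc $e=uv$ is a cut-arc exactly when $D-e$ is not strong. The first ingredient is the structure of a single cut-arc: if $e=uv$ is a cut-arc of a strong digraph $D$, then re-adding $e$ to $D-e$ restores strong connectivity, which forces the acyclic ordering of the strong components of $D-e$ to be a single directed path $C_1,\dots,C_q$ with $v\in C_1$ and $u\in C_q$. Since $D$ is semicomplete, any two vertices in distinct components are joined by an arc, and in $D-e$ all these arcs run forward; hence $e$ is the unique arc running from a later component to an earlier one.

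Let $E_c$ be the set of all cut-arcs and let the parts $U_1,\dots,U_l$ be the strong components of $D-E_c$. Each $D\langle U_i\rangle$ is strong, since re-adding arcs cannot destroy strong connectivity, and no cut-arc has both endpoints in one $U_i$: otherwise its endpoints would still lie on a common cycle in $D-E_c\subseteq D-e$, contradicting that it is a cut-arc. Thus every cut-arc joins two distinct parts. To order the parts I would form the auxiliary digraph $D^{*}$ obtained from $D$ by reversing every cut-arc and keeping every non-cut arc: a vertex ordering making all cut-arcs backward and all remaining arcs forward is precisely a topological ordering of $D^{*}$, and the whole construction succeeds as soon as $D^{*}$ is acyclic on the parts (equivalently, the parts are exactly its strong components). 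Taking any acyclic ordering of its components then yields $(U_1,\dots,U_l)$, and by construction the set of backward arcs equals $E_c$.

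Proving that $D^{*}$ is acyclic is the main obstacle, and it reduces to a \emph{local} claim: no two parts $X,Y$ are joined by cut-arcs in both directions. Indeed, the non-cut arcs between two parts all point the same way, since $X$ and $Y$ are distinct strong components of $D-E_c$; so a clash can only arise from a cut-arc $X\to Y$ together with a cut-arc $Y\to X$. Applying the single-cut-arc description to each of them shows that these would be the \emph{only} arcs between $X$ and $Y$, forcing a degenerate configuration (a digon between two singletons) that a short trapping argument excludes: every remaining vertex would then be able to reach neither endpoint, contradicting strongness — this is where the hypothesis $|V(D)|\ge 4$ enters. Granting the local claim, every pair of parts receives a consistent direction in $D^{*}$, and for pairs carrying a non-cut arc this direction coincides with the acyclic ordering of the condensation of $D-E_c$, while on the remaining (cut-arc-only) pairs it is still consistent with the path-condensation of that cut-arc; the joint compatibility of these constraints — the technical heart of the argument — yields that $D^{*}$ is acyclic. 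For uniqueness, any nice decomposition must have all within-part arcs non-cut and all backward arcs equal to $E_c$, which forces each part to be a strong component of $D-E_c$; since $D$ is semicomplete every two parts are comparable, so the acyclic order on the parts is a transitive tournament and hence unique. Finally, the construction is polynomial: $E_c$ is found by testing strong connectivity of $D-e$ for each arc $e$, the parts are the strong components of $D-E_c$, and the order is a topological sort of $D^{*}$.
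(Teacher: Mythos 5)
The paper does not actually prove Theorem \ref{thm-nicedecom}: it imports the result from \cite{bangJGT102}, so there is no internal proof to compare against and your proposal has to stand on its own. Much of it does: the parts must indeed be the strong components of $D-E_c$ where $E_c$ is the set of cut-arcs, each such part induces a strong subdigraph, no cut-arc has both ends in one part (your rerouting argument is correct), uniqueness follows once existence is known because semicompleteness makes every two parts comparable, and the construction is clearly polynomial. The single-cut-arc analysis (for a cut-arc $uv$, the vertex $u$ lies in the unique terminal and $v$ in the unique initial strong component of $D-uv$, and $uv$ is the only backward arc of $D$ relative to that component ordering) is also sound and is the right tool.

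The gap is exactly where you write ``the technical heart of the argument'': the acyclicity of the part-level constraint digraph is asserted, not proved, and the reduction to a local claim is not valid. Acyclicity is not a local property: even if every pair of parts receives a consistent direction, the resulting tournament on the parts could a priori contain a directed triangle (for instance three singleton parts pairwise joined by single cut-arcs forming a $3$-cycle of cut-arcs); nothing in your argument excludes such global obstructions, and excluding them is essentially equivalent to establishing the interleaving structure recorded in Proposition \ref{prop-nicedecom}, i.e.\ it is the real content of the theorem. The local analysis is also incomplete on two counts. A clash between two parts can arise not only from opposite cut-arcs but from a cut-arc and a non-cut arc pointing the \emph{same} way (the former must be backward, the latter forward); this case is in fact excludable from your single-cut-arc description, since the non-cut arc would be a second backward arc of $D$ relative to the ordering of $D-e$, but you do not address it. And the claim that opposite cut-arcs between $X$ and $Y$ force ``a digon between two singletons'' does not follow from the counting: each of the two cut-arcs is the unique arc in its direction, so there are exactly two arcs between $X$ and $Y$ and semicompleteness gives $|X|\cdot|Y|\le 2$, which also permits $|X|=2$, $|Y|=1$ (say $X=\{u,v'\}$, $Y=\{v\}$ with arcs $uv$ and $vv'$), a configuration your trapping argument does not treat. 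Until global acyclicity is actually established, the existence half of the theorem is unproved.
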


Given a semicomplete digraph and its nice decomposition, the \textbf{natural ordering} of its backward arcs is the ordering of these in decreasing order according to the index of their tails. 

\begin{prop}\cite{bangJGT102} \label{prop-nicedecom}
	Let $D$ be a strong semicomplete digraph of order at least 4. Suppose that $(U_1,\ldots,U_l)$ is the nice decomposition of $D$ and  $(x_1y_1,\ldots,x_ry_r)$ is the natural ordering of the backward arcs. Then the following statements hold.
	
	(i)  $x_1\in U_l$ and $y_r\in U_1$;
	
	(ii) $ind(y_{j+1})<ind(y_j)\leq ind(x_{j+1})< ind(x_j)$ for all $j\in[r-1]$ and $ind(y_{j+1})\leq ind(x_{j+2})<ind(y_j)$ for all $j\in[r-2]$.
\end{prop}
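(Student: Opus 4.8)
\noindent\emph{Proof strategy.} The plan is to recast the statement in terms of which ``level cuts'' each backward arc crosses, and then to read off all the inequalities from two facts: a covering fact coming from strong connectivity, and a ``private level'' fact coming from the cut-arc property. Write $a_j:=ind(x_j)$ and $b_j:=ind(y_j)$, so that $a_j>b_j$ for every $j$, and for $i\in[l-1]$ call the partition $(U_1\cup\cdots\cup U_i,\;U_{i+1}\cup\cdots\cup U_l)$ the \emph{level cut} $i$. A backward arc $x_jy_j$ crosses level cut $i$ precisely when $b_j\le i\le a_j-1$; I will denote this set of levels by $I_j=\{b_j,b_j+1,\ldots,a_j-1\}$. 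Since $D$ is strong, every level cut must be crossed by at least one arc from the right side to the left side, and such an arc is backward, hence a cut-arc by the definition of a nice decomposition. Thus $\bigcup_{j=1}^r I_j=\{1,\ldots,l-1\}$. In particular level $l-1$ is crossed by some backward arc, whose tail then lies in $U_l$; as $a_1$ is the maximum tail index, this forces $a_1=l$, that is, $x_1\in U_l$, which is half of (i).

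The technical heart is the following claim: each backward arc $x_jy_j$ owns a \emph{private} level $m_j\in I_j$, meaning that no other backward arc crosses level cut $m_j$. To prove it I would use that $x_jy_j$ is a cut-arc, so $D-x_jy_j$ is not strong and therefore admits a partition $(L,R)$ with $y_j\in L$, $x_j\in R$ and with $x_jy_j$ the \emph{only} arc from $R$ to $L$. Because $x_jy_j$ joins two different blocks it lies inside no $U_s$; hence for every block $U_s$ all arcs between $U_s\cap L$ and $U_s\cap R$ would point from $L$ to $R$, which is impossible if both parts were nonempty, since $D\langle U_s\rangle$ is strong. So every block lies entirely in $L$ or entirely in $R$. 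One then argues, using semicompleteness together with the fact that between two blocks all adjacencies but one point the same way, that $L$ may be taken to be a down-set $U_1\cup\cdots\cup U_{m_j}$ of the block order; then $b_j\le m_j\le a_j-1$, so $m_j\in I_j$, and the uniqueness of the arc $x_jy_j$ from $R$ to $L$ says exactly that $x_jy_j$ is the only backward arc crossing level cut $m_j$.

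Granting the private levels, the remaining inequalities are pure interval combinatorics. If some $I_{j'}$ were contained in $I_j$ for $j\ne j'$, then the private level $m_{j'}\in I_{j'}\subseteq I_j$ would also be crossed by $x_jy_j$, a contradiction; hence $I_1,\ldots,I_r$ form an antichain under inclusion. An antichain of intervals has pairwise distinct left endpoints and pairwise distinct right endpoints, and sorting by left endpoint agrees with sorting by right endpoint. Since the natural ordering sorts the tails in decreasing order, we obtain $a_1>a_2>\cdots>a_r$ and simultaneously $b_1>b_2>\cdots>b_r$. Next, for each $j\in[r-1]$ I claim $b_j\le a_{j+1}$: otherwise $a_{j+1}\le b_j-1$ (so in particular $b_j\ge 2$), whence $I_{j+1},\ldots,I_r$ all lie in $\{1,\ldots,b_j-2\}$ while $I_1,\ldots,I_j$ all start at index $\ge b_j$, leaving level $b_j-1$ uncovered and contradicting the covering fact. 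Together these give $b_{j+1}<b_j\le a_{j+1}<a_j$, which is the first chain of (ii). Finally, level cut $1$ must be crossed by a backward arc whose head then lies in $U_1$, so the minimum head index is $1$, and by $b_1>\cdots>b_r$ this gives $b_r=1$, i.e. $y_r\in U_1$, completing (i).

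For the second chain, fix $j\in[r-2]$ and use the private level $m_{j+1}\in I_{j+1}$. Since $m_{j+1}$ is crossed by no backward arc other than $x_{j+1}y_{j+1}$, it lies in neither $I_j$ nor $I_{j+2}$. From $m_{j+1}\le a_{j+1}-1<a_j$, so $m_{j+1}\le a_j-1$, together with $m_{j+1}\notin I_j=\{b_j,\ldots,a_j-1\}$, we get $m_{j+1}<b_j$; from $m_{j+1}\ge b_{j+1}>b_{j+2}$ together with $m_{j+1}\notin I_{j+2}=\{b_{j+2},\ldots,a_{j+2}-1\}$ we get $m_{j+1}\ge a_{j+2}$. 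Hence $a_{j+2}\le m_{j+1}<b_j$, and combining with $b_{j+1}\le a_{j+2}$ (the first chain applied to $j+1$) yields $b_{j+1}\le a_{j+2}<b_j$, which is the second chain of (ii). The one step that is not bookkeeping is the private-level claim, and within it the reduction of the cut-arc separator to a genuine level cut (equivalently, that the initial strong component of $D-x_jy_j$ is a down-set of blocks); this is exactly where semicompleteness and the structure of the nice decomposition must be exploited, and I expect it to be the main obstacle.
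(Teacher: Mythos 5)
This proposition is imported by the paper from \cite{bangJGT102} without proof, so there is no in-paper argument to compare against; I have assessed your proposal on its own merits. Your reformulation in terms of the intervals $I_j=\{b_j,\ldots,a_j-1\}$ of crossed level cuts is a clean way to organize the statement, and everything downstream of your two key facts is correct: the covering fact $\bigcup_j I_j=\{1,\ldots,l-1\}$ does follow from strong connectivity, and, granting the private-level claim, your antichain argument, the derivation of $b_{j+1}<b_j\le a_{j+1}<a_j$, of statement (i), and of $b_{j+1}\le a_{j+2}<b_j$ are all complete and accurate bookkeeping.

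The genuine gap is exactly where you flagged it, and it is not a small one: the reduction of an arbitrary cut-arc separator to a level cut. You correctly show that for a partition $(L,R)$ witnessing that $x_jy_j$ is a cut-arc, no block $U_s$ is split between $L$ and $R$ (since $D\langle U_s\rangle$ is strong and the unique $R\to L$ arc joins two different blocks). But the passage from ``no block is split'' to ``$L$ may be taken to be a down-set $U_1\cup\cdots\cup U_{m_j}$'' is only asserted. Following your own reasoning, a hypothetical violation reduces to blocks $U_s\subseteq R$ and $U_{s'}\subseteq L$ with $s<s'$ and no forward arc between them; since the only $R\to L$ arc is the backward arc $x_jy_j$, semicompleteness then forces every arc between these blocks to be backward, hence a cut-arc, which (because the blocks are strong) forces $U_s=\{u\}$, $U_{s'}=\{v\}$ with $vu$ the unique arc between them. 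Nothing in your argument excludes this residual configuration, and excluding it is essentially the entire graph-theoretic content of the proposition: it amounts to showing that the strong components of $D-x_jy_j$ are unions of consecutive blocks, which is where the defining property ``cut-arcs $=$ backward arcs'' and the semicomplete structure must really be brought to bear. Without it the private levels do not exist, and then the antichain property, the strict monotonicity of the $a_j$ and $b_j$, and all of part (ii) collapse. In short, your proposal is a correct and complete reduction of the proposition to this one structural lemma, but the lemma itself---the heart of the matter---remains unproved.
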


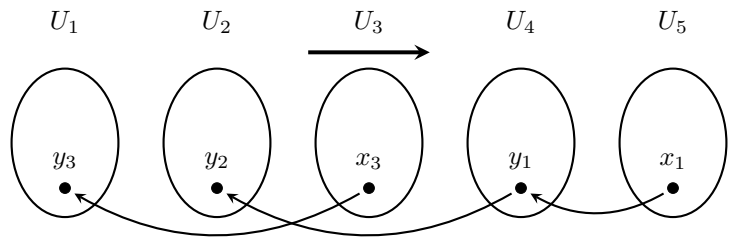
\begin{figure}[H]
	\centering\begin{tikzpicture}[scale=0.4]
	\foreach \i in {(-10,0),(-5,0),(0,0),(5,0),(10,0)}{\draw[ line width=0.8pt] \i ellipse [x radius=50pt, y radius=70pt];}
	\coordinate [label=center:$U_1$] () at (-10,4);
	\coordinate [label=center:$U_2$] () at (-5,4);
	\coordinate [label=center:$U_3$] () at (0,4);
	\coordinate [label=center:$U_4$] () at (5,4);
	\coordinate [label=center:$U_5$] () at (10,4);
	\draw[line width=1.5pt] (-2,3) -- (2,3); 

	\filldraw[black](-5,-1.5) circle (5pt)node[label=above:$y_2$](y2){};
	\filldraw[black](-10,-1.5) circle (5pt)node[label=above:$y_3$](y3){};
	\filldraw[black](10,-1.5) circle (5pt)node[label=above:$x_1$](x1){};
	\filldraw[black](5,-1.5) circle (5pt)node[label=above:$y_1$](y1){};
	\filldraw[black](0,-1.5) circle (5pt)node[label=above:$x_3$](x3){};
	
	\foreach \i/\j in {x1/y1,y1/y2,x3/y3}{\path[draw, line width=0.8pt] (\i) edge[bend left=30] (\j);}

	\end{tikzpicture}\caption{An illustration of a nice decomposition $(U_1,\ldots, U_5)$. The backward arcs are $\{x_1y_1,x_2y_2,x_3y_3\}$ with $x_2=y_1$. All arcs between different $U_i$s which are not shown are from left to right.}
	\label{fig-nicedecom}
\end{figure}

\subsection{Strong arc decompositions of semicomplete directed multigraphs}

Bang-Jensen, Gutin and Yeo generalized Theorem \ref{thm-SD} to  semicomplete directed multigraphs. The characterization below turns out to be extremely useful in our proof of Theorem \ref{thm-Arcdecsplit}.

\begin{thm}\cite{bangJGT95}\label{thm-SDM}
	A 2‐arc‐strong semicomplete directed multigraph $D = (V, A)$ on $n\geq 4$ vertices has a strong arc decomposition if and only if it is not isomorphic to one of the exceptional digraphs depicted in Figure \ref{fig-DMwithoutAD}. Furthermore, a strong arc decomposition of $D$ can be obtained in polynomial time when it exists.
\end{thm}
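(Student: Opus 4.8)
The plan is to recast the problem as a packing question. I would first observe that a directed multigraph $D=(V,A)$ has a strong arc decomposition if and only if it contains two \emph{arc-disjoint} spanning strong subdigraphs $D_1,D_2$: given such a pair, any arc not used by $D_1$ or $D_2$ can be thrown into $A(D_1)$ (adding arcs to a strong digraph keeps it strong), yielding a genuine partition of $A$, and conversely a strong arc decomposition is itself such a pair. This reformulation turns the \textbf{necessity} part into a finite verification: for each digraph $F$ in Figure~\ref{fig-DMwithoutAD} I would check directly that $F$ has no two arc-disjoint spanning strong subdigraphs. Since each such $F$ has only four vertices and few arcs, this is a short cut/counting argument. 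For instance, $S_4$ has exactly $8$ arcs and a unique Hamiltonian cycle, so any split into two spanning strong subdigraphs would force each part (needing at least $4$ arcs) to be a Hamiltonian cycle, which is impossible; each remaining exception is $S_4$ with extra parallel arcs, for which the same obstruction to returning across the cut $\{v_1,v_3\}$ versus $\{v_2,v_4\}$ persists.

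For \textbf{sufficiency}, let $D$ be $2$-arc-strong, semicomplete, on $n\ge 4$ vertices, and not an exception, and let $S$ be the underlying simple semicomplete digraph of $D$. I would split into two regimes according to the arc-strength of $S$. If $S$ is itself $2$-arc-strong and $S\neq S_4$, then Theorem~\ref{thm-SD} gives a strong arc decomposition of $S$ into spanning strong subdigraphs $S_1,S_2$; these are arc-disjoint spanning strong subdigraphs of $D$, so we are done (the parallel copies of $D$ are distributed arbitrarily). If $S=S_4$, then $n=4$ and, since $D$ is not an exception, $D$ carries at least one parallel arc; here I would finish by a direct finite analysis of the semicomplete multigraphs on four vertices whose underlying simple digraph is $S_4$, reading off precisely the list in Figure~\ref{fig-DMwithoutAD}.

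The substantive case is when $S$ is not $2$-arc-strong, that is, when the second arc across some cut of $D$ is supplied only by parallel arcs. Here I would work with the unique nice decomposition $(U_1,\dots,U_l)$ of $S$ (Theorem~\ref{thm-nicedecom}), whose backward arcs $x_1y_1,\dots,x_ry_r$ are exactly the cut-arcs of $S$ and obey the nesting relations of Proposition~\ref{prop-nicedecom}. Because $D$ is $2$-arc-strong, every cut separating an initial segment $U_1\cup\dots\cup U_i$ from the rest is crossed, in each direction, by at least two arcs of $D$; in particular at least two arcs leave $U_l$ and at least two enter $U_1$, using parallel copies wherever $S$ has only one backward arc. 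The construction then assigns the backward arcs and their parallel copies to two classes so that, reading them in natural order and exploiting Proposition~\ref{prop-nicedecom}(ii), each class provides a ``return'' from $U_l$ down to $U_1$ passing through every block; the cross arcs between distinct blocks that are not backward all go from lower to higher index, and this complete forward structure is split to supply the forward connectivity of both subdigraphs. I would organize this as an induction on $l$ (equivalently, on the number of backward arcs), peeling off the terminal block $U_l$, with the $2$-arc-strong simple case handled by Theorem~\ref{thm-SD} as the base.

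The \textbf{main obstacle} is this last construction: threading two arc-disjoint spanning strong subdigraphs simultaneously when individual blocks $U_i$ need not be $2$-arc-strong (indeed $D\langle U_i\rangle$ is only guaranteed strong), so neither subdigraph can be forced to be strong inside a single block; global strong connectivity of each $D_j$ must instead be achieved by combining intra-block arcs, forward cross arcs, and the assigned backward returns. Small blocks (with $|U_i|=1$ or $2$) and the bookkeeping needed to guarantee that both returns, after consuming parallel copies, still reach every block are the delicate points, and it is exactly the failure of this bookkeeping in the four-vertex case $S=S_4$ that produces the finite list of exceptions. Finally, since the nice decomposition and the decomposition of Theorem~\ref{thm-SD} are polynomial and the arc assignment above is explicit, every step is algorithmic, giving the claimed polynomial-time construction.
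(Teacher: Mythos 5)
This statement is not proved in the paper at all: it is imported verbatim from \cite{bangJGT95}, so there is no in-paper argument to compare against, and your proposal has to be judged as a free-standing proof of a theorem whose published proof is itself a long case analysis. The parts of your proposal that are complete are fine: the observation that a strong arc decomposition is equivalent to two arc-disjoint strong spanning subdigraphs (by dumping unused arcs into one side), the finite non-existence checks for the four exceptions (your unique-Hamiltonian-cycle argument for $S_4$ is correct, and the checks for $S_{4,1},S_{4,2},S_{4,3}$ are routine), and the reduction to Theorem \ref{thm-SD} when the underlying simple semicomplete digraph $S$ is $2$-arc-strong and not $S_4$.

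The genuine gap is that everything beyond those reductions — which is the actual content of the theorem — is left as a plan rather than an argument, and you say so yourself in the ``main obstacle'' paragraph. Concretely: in the case where $S$ is not $2$-arc-strong you never specify the partition of the backward arcs $x_1y_1,\dots,x_ry_r$ and their forced parallel copies into two classes such that each class crosses every cut $(U_1\cup\cdots\cup U_i,\,U_{i+1}\cup\cdots\cup U_l)$ backwards (note that by Proposition \ref{prop-nicedecom}(ii) a cut can be covered by at most two consecutive backward arcs, so whenever a cut is covered by only one, $D$ must carry a parallel copy of that arc, and the assignment of copies must be made consistently along the whole span of each backward arc); nor do you show how the intra-block and forward arcs are split so that \emph{both} classes end up strong when $D\langle U_i\rangle$ is merely strong — e.g.\ a block that is a single $2$-cycle gives each class only one of its two arcs, and singleton terminal blocks $U_l=\{x_1\}$, $U_{l-1}=\{y_1\}$ (the configuration of Observation \ref{obse-smallvtx-in-gd}) make the available out-degrees critically tight. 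Moreover, the proposed induction ``peeling off $U_l$'' is not well-founded as stated: $D-U_l$ need not be $2$-arc-strong, so the inductive hypothesis does not apply to it, and you give no replacement hypothesis. Finally, the $S=S_4$ multigraph case is deferred to an unspecified ``direct finite analysis'' (finite only via your monotonicity remark), which is exactly the analysis that produces the list in Figure \ref{fig-DMwithoutAD}. So the proposal is a credible strategy outline, but its decisive steps are missing, and they cannot be waved through: in \cite{bangJGT95} they occupy the bulk of the proof.
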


\begin{figure}[H]
	\centering
	\subfigure{\begin{minipage}[t]{0.23\linewidth}
			\centering\begin{tikzpicture}[scale=0.8]
				\filldraw[black](0,0) circle (3pt)node[label=left:$v_1$](v1){};
				\filldraw[black](2,0) circle (3pt)node[label=right:$v_2$](v2){};
				\filldraw[black](0,-2) circle (3pt)node[label=left:$v_3$](v3){};
				\filldraw[black](2,-2) circle (3pt)node[label=right:$v_4$](v4){};
				\foreach \i/\j/\t in {
					v1/v2/0,
					v2/v3/0,
					v3/v4/0,
					v4/v1/0,
					v1/v3/15,
					v2/v4/15,
					v3/v1/15,
					v4/v2/15
				}{\path[draw, line width=0.8] (\i) edge[bend left=\t] (\j);}			
			\end{tikzpicture}\caption*{$S_4$}\end{minipage}}
	\subfigure{\begin{minipage}[t]{0.23\linewidth}
			\centering\begin{tikzpicture}[scale=0.8]
				\filldraw[black](0,0) circle (3pt)node[label=left:$v_1$](v1){};
				\filldraw[black](2,0) circle (3pt)node[label=right:$v_2$](v2){};
				\filldraw[black](0,-2) circle (3pt)node[label=left:$v_3$](v3){};
				\filldraw[black](2,-2) circle (3pt)node[label=right:$v_4$](v4){};
				\foreach \i/\j/\t in {
					v1/v2/0,
					v2/v3/0,
					v3/v4/0,
					v4/v1/0,
					v1/v3/15,
					v2/v4/15,
					v3/v1/10,
					v3/v1/30,
					v4/v2/15
				}{\path[draw, line width=0.8] (\i) edge[bend left=\t] (\j);}	
			\end{tikzpicture}\caption*{$S_{4,1}$}\end{minipage}}
	\subfigure{\begin{minipage}[t]{0.23\linewidth}
			\centering\begin{tikzpicture}[scale=0.8]
				\filldraw[black](0,0) circle (3pt)node[label=left:$v_1$](v1){};
				\filldraw[black](2,0) circle (3pt)node[label=right:$v_2$](v2){};
				\filldraw[black](0,-2) circle (3pt)node[label=left:$v_3$](v3){};
				\filldraw[black](2,-2) circle (3pt)node[label=right:$v_4$](v4){};
				\foreach \i/\j/\t in {
					v1/v2/0,
					v1/v2/20,
					v2/v3/0,
					v3/v4/0,
					v4/v1/0,
					v1/v3/15,
					v2/v4/15,
					v3/v1/15,
					v4/v2/15
				}{\path[draw, line width=0.8] (\i) edge[bend left=\t] (\j);}	
			\end{tikzpicture}\caption*{$S_{4,2}$}\end{minipage}}
	\subfigure{\begin{minipage}[t]{0.23\linewidth}
			\centering\begin{tikzpicture}[scale=0.8]
				\filldraw[black](0,0) circle (3pt)node[label=left:$v_1$](v1){};
				\filldraw[black](2,0) circle (3pt)node[label=right:$v_2$](v2){};
				\filldraw[black](0,-2) circle (3pt)node[label=left:$v_3$](v3){};
				\filldraw[black](2,-2) circle (3pt)node[label=right:$v_4$](v4){};
				\foreach \i/\j/\t in {
					v1/v2/0,
					v2/v3/0,
					v3/v4/0,
					v4/v1/0,
					v1/v3/15,
					v2/v4/10,
					v2/v4/30,
					v3/v1/10,
					v3/v1/30,
					v4/v2/15
				}{\path[draw, line width=0.8] (\i) edge[bend left=\t] (\j);}	
			\end{tikzpicture}\caption*{$S_{4,3}$}\end{minipage}}
	\caption{2-arc-strong directed multigraphs without strong arc decompositions.}
	\label{fig-DMwithoutAD}
\end{figure}

\subsection{Splitting off arcs}

We now introduce a very useful operation that can be used to obtain various semicomplete directed multigraphs on the vertex set $V_2$ from a split digraph $D=(V_1,V_2;A)$. This will allow us to apply Theorem \ref{thm-SDM} and Lemma \ref{(D-X)->D} below in many cases to obtain the desired result for $D$. 

\begin{definition}\label{def-splitting}
  Suppose that $D=(V_1,V_2;A)$ is a split digraph and that $ut$ and $tv$ with $u\neq v$ are two arcs incident to a vertex $t\in V_1$. \textbf{Splitting off the pair $(ut, tv)$ at $t$} means that we replace the arcs $ut, tv$ by a new arc $uv$ (or a copy of that arc if it already exists). The arc $uv$ is called a \textbf{splitting arc}. The reverse operation where we replace a splitting arc by the two original arcs is called  \textbf{lifting }the arc.
\end{definition}

\begin{definition}
	 	 Suppose that $P$ is a path of a split digraph $D=(V_1,V_2;A)$ with both end-vertices in $V_2$. \textbf{Splitting off the path $P$} means that we split  off the pairs $(t^-t,tt^+)$ for every vertex $t\in V(P)\cap V_1$, where $t^-$ and $t^+$ are the predecessor and successor of $t$ on the path $P$, respectively.
\end{definition}

\begin{lem}\label{(D-X)->D}
	Let $D$ be a directed multigraph and let $X$ be a subset of $V(D)$ such that every vertex of $D-X$ has both two in-neighbors and two out-neighbors in $X$. If $X$ has a strong arc decomposition  then $D$ has a strong arc decomposition.
\end{lem}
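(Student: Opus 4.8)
The plan is to build the desired decomposition of $D$ directly on top of the given decomposition of the induced subdigraph $D\langle X\rangle$. Let $(A_1,A_2)$ be a strong arc decomposition of $D\langle X\rangle$, so that $D_1=(X,A_1)$ and $D_2=(X,A_2)$ are both strong. I will extend these to arc sets $B_1\supseteq A_1$ and $B_2\supseteq A_2$ that partition $A(D)$ and induce strong spanning subdigraphs of $D$.

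First I would handle the vertices outside $X$ one at a time. Fix $v\in V(D)\setminus X$. By hypothesis $v$ has two distinct in-neighbors $x_1,x_2\in X$ and two distinct out-neighbors $y_1,y_2\in X$. I put the arcs $x_1v$ and $vy_1$ into $B_1$ and the arcs $x_2v$ and $vy_2$ into $B_2$. Since $x_1\neq x_2$ and $y_1\neq y_2$, the four chosen arcs are pairwise distinct, and arcs chosen for different vertices of $V(D)\setminus X$ are automatically distinct because they have different end-vertices in $V(D)\setminus X$; hence no conflict arises with the partition.

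The key step is to verify that each $(V(D),B_i)$ is strong. Here I would argue that attaching a new vertex $v$ to a strong digraph by a single in-arc coming from it and a single out-arc going to it preserves strong connectivity: every old vertex reaches $v$ via its chosen in-neighbor, $v$ reaches every old vertex via its chosen out-neighbor, and any two added vertices communicate through $X$. Performing this for all $v\in V(D)\setminus X$ simultaneously, and using that $D_i$ is already strong on $X$, shows that the subdigraph of $(V(D),B_i)$ consisting of $A_i$ together with the two chosen arcs per external vertex is already strong and spanning. Finally, every arc of $D$ not yet assigned can be placed into $B_1$ or $B_2$ arbitrarily; since adding arcs to a strong spanning subdigraph keeps it strong, both $(V(D),B_1)$ and $(V(D),B_2)$ remain strong, while $(B_1,B_2)$ is a partition of $A(D)$, as required.

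I expect no serious obstacle: the only point requiring care is that the two in-neighbors (resp.\ out-neighbors) guaranteed by the hypothesis be \emph{distinct} vertices, which is exactly what ensures that the arcs assigned to $B_1$ and to $B_2$ are distinct, so that the two arc sets stay disjoint. The heart of the argument is the elementary observation that a strong digraph remains strong when a vertex is glued on by one entering and one leaving arc.
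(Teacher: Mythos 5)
Your proposal is correct and follows essentially the same route as the paper: take the given strong arc decomposition of $D\langle X\rangle$ and, for each vertex outside $X$, assign one in-arc and one out-arc to each part, using the two distinct in-neighbors and two distinct out-neighbors to keep the parts disjoint. Your explicit remark that leftover arcs can be distributed arbitrarily (since adding arcs preserves strongness) is a small point the paper leaves implicit, but the argument is the same.
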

\begin{proof}
	Let $(A_1,A_2)$ be a strong arc decomposition of $X$. By assumption, every vertex $x\in D-X$ has two out-neighbors $x_{1}^+, x_{2}^+$ and two in-neighbors $x_{1}^-,x_{2}^-$ in $X$. Then $A_i\cup\{x_{i}^-x, xx_{i}^+: x\in D-X\}$, $i\in[2]$ is a strong arc decomposition of $D$.
\end{proof}

\begin{lem}\label{lem-Dstar}
	Let $D=(V_1,V_2;A)$ be a split digraph such that every vertex of $V_1$ has out- and in-degree at least 3 in $D$. Let $D^{\ast}$ be a directed multigraph obtained from $D$ by splitting off at most two pairs at every vertex in $V_1$. If $D^{\ast}\left\langle V_{2}\right\rangle$ has a strong arc decomposition, then there is a strong arc decomposition of $D$. 
\end{lem}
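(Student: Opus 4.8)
The plan is to start from a strong arc decomposition $(B_1,B_2)$ of $D^{\ast}\langle V_2\rangle$ and turn it into one of $D$ by \emph{lifting} the splitting arcs and then attaching every vertex of $V_1$ to both parts using the arcs of $D$ that were never split off. Since $V_1$ is independent, every arc split off at a vertex $t\in V_1$ has both its end-vertices in $V_2$, so each splitting arc of $D^{\ast}$ lies inside $V_2$. Thus each arc of $B_1\cup B_2$ is either an original arc of $D\langle V_2\rangle$ or a splitting arc obtained from a pair $(ut,tv)$ with $t\in V_1$ and $u,v\in V_2$.

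First I would lift. For every splitting arc $uv\in B_i$ coming from the pair $(ut,tv)$ I replace $uv$ in $B_i$ by the two arcs $ut,tv$, obtaining arc sets $A_1',A_2'$; this is just subdividing each splitting arc by its vertex $t\in V_1$. Let $W_i\subseteq V_1$ be the set of vertices $t$ at least one of whose splitting arcs lies in $B_i$. Since subdividing an arc of a strong digraph keeps it strong (any walk using $uv$ reroutes through $u\to t\to v$, and $t$ is reached from $u$ and reaches $v$), and since routing two splitting arcs through the same $t$ causes no problem, each $(V_2\cup W_i,A_i')$ is strong. Moreover $A_1'$ and $A_2'$ are arc-disjoint and together use exactly the original $V_2$-arcs together with all arcs of $D$ that were split off.

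It remains to attach the still-uncovered vertices of $V_1$ to both parts and to place the arcs not yet used. For $t\in V_1$ let $a$ and $b$ be the numbers of splitting arcs of $t$ in $B_1$ and $B_2$; then $t$ already has an in- and an out-arc in $A_i'$ exactly when the corresponding count is positive, so nothing is needed when $a,b\ge 1$. Because $t$ has in- and out-degree at least $3$ in $D$ while splitting consumed exactly $a+b\le 2$ of its in-arcs and $a+b$ of its out-arcs, at least $3-(a+b)$ in-arcs and at least $3-(a+b)$ out-arcs of $t$ (all with the other end in $V_2$) survive unused. A short case check then shows this always suffices: if $t$ is uncovered in exactly one part ($a=0$ or $b=0$, with $a+b\ge 1$) one residual in-arc and one residual out-arc attach it there, and if $t$ is uncovered in both parts ($a=b=0$) two distinct residual in-arcs and two distinct residual out-arcs attach it to both. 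Adding an in-arc $wt$ and an out-arc $tz$ with $w,z\in V_2$ to a part already strong on a vertex set containing $V_2$ keeps it strong, so after doing this for every $t\in V_1$ both parts are strong and spanning. Finally I dump every still-unused arc of $D$ (necessarily incident with $V_1$, its other end in $V_2$) into either part arbitrarily; this preserves strongness since both its end-vertices already lie in both parts. The two resulting arc sets partition $A$ and induce strong spanning subdigraphs, i.e.\ a strong arc decomposition of $D$.

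The only place where the hypotheses enter, and hence the main point to get right, is the counting in the last paragraph: the bound of at most two split pairs together with in- and out-degree at least $3$ is exactly what guarantees that enough unsplit in- and out-arcs remain at each $t\in V_1$ to reach it from, and out of, both strong cores simultaneously. This step is the quantitative refinement of Lemma \ref{(D-X)->D}, which is the special case where no splitting is performed.
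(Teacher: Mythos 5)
Your proposal is correct and follows essentially the same route as the paper: lift each splitting arc back to its pair through the vertex of $V_1$, observe that strongness is preserved, and use the degree-at-least-3 hypothesis against the at-most-two-pairs bound to find residual in- and out-arcs attaching each vertex of $V_1$ to any part that does not yet cover it (the paper handles the vertices covered by neither part via Lemma \ref{(D-X)->D}, which you correctly identify as the special case your direct count refines). No gaps.
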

\begin{proof}
	Let $A_1$ and $A_2$ be a strong arc decomposition of $D^{\ast}\left\langle  V_{2}\right\rangle$. Lifting every splitting arc $s_1s_2$, that is, replacing $s_1s_2$ with the two arcs $s_1t,ts_2$ in its corresponding splitting pair,  we obtain two disjoint arc sets $A_1^{\prime}, A_2^{\prime}$ of $A(D)$, see Figure \ref{fig-lem25}. %. In other words, for $i\in[2]$, the arc set $A_i^{\prime}$ is obtained from $A_i$ by deleting each splitting arc $s_1s_2\in A_i$ and adding the arcs $s_1t,ts_2$ in its splitting pair to $A_i$, see Figure \ref{fig-lem25}. 
	
	Let $D_i$ $(i\in[2])$ be the digraph induced by $A_i^{\prime}$. Clearly, each $D_i$ is strong  and it covers all vertices of $V_2$. Then every vertex of $V_1$ has at least 3 out- and in-neighbors in $V(D_i)$ for all $i\in[2]$. Recall that $D^{\ast}$ is obtained from $D$ by splitting off at most two pairs of every vertex in $V_1$. This implies that for each $i\in[2]$ and each vertex $t\in V(D_i)-V(D_{3-i})$, $t$ has at least one pair of in- and out-arcs which are not in $D_i$.  Adding such in- and out-arcs of $t$ in $A(D)-A(D_{i})$ to $A(D_{3-i})$, we obtain a strong arc decomposition of $D\left\langle V(D_1\cup D_2)\right\rangle$ and  the claim follows by Lemma \ref{(D-X)->D}.
\end{proof}

\begin{figure}[H]
	\centering
	\subfigure{\begin{minipage}[t]{0.3\linewidth}
			\centering\begin{tikzpicture}[scale=1]
				\coordinate [label=center:$V_2$] () at (2,2);
			\coordinate [label=center:$V_1$] () at (0,2);
			\filldraw[black](2,1) circle (3pt)node[label=right:$s_1$](s1){};
			\filldraw[black](2,-0.5) circle (3pt)node[](s2){};
			\filldraw[black](2,-2) circle (3pt)node[](s3){};
			\filldraw[black](2,-3.5) circle (3pt)node[label=right:$s_4$](s4){};
			\filldraw[black](0,-0) circle (3pt)node[label=left:$t$](t){};
			\filldraw[black](0,-2) circle (3pt)node[label=left:$t^{\prime}$](tt){};
			
			\foreach \i/\j/\t in {
				s1/s2/0,
				s3/s2/0,
				s3/s4/15,
				s4/s3/15,
				s2/s4/30,
				t/s1/9,
				s1/t/9,
				t/s2/9,
				s2/t/9,
				t/s3/9,
				s3/t/9,
				tt/s1/9,
				s1/tt/9,
				tt/s2/9,
				s2/tt/9,
				tt/s3/9,
				s3/tt/9
			}{\path[draw, line width=0.8] (\i) edge[bend left=\t] (\j);}
		\path[draw, line width=0.8pt] (s3) edge[bend right=30] (s1);
		\path[draw, line width=0.8pt] (s4) edge[bend right=50] (s1);
			\end{tikzpicture}\caption*{(a)}\end{minipage}}
	\subfigure{\begin{minipage}[t]{0.3\linewidth}
			\centering\begin{tikzpicture}[scale=1]
			\coordinate [label=center:$V_2$] () at (2,2);
			\coordinate [label=center:$V_1$] () at (0,2);
			\filldraw[black](2,1) circle (3pt)node[label=right:$s_1$](s1){};
			\filldraw[black](2,-0.5) circle (3pt)node[](s2){};
			\filldraw[black](2,-2) circle (3pt)node[](s3){};
			\filldraw[black](2,-3.5) circle (3pt)node[label=right:$s_4$](s4){};
			\filldraw[black](0,-0) circle (3pt)node[label=left:$t$](t){};
			\filldraw[black](0,-2) circle (3pt)node[label=left:$t^{\prime}$](tt){};
			
			\foreach \i/\j/\t/\w in {
				s1/s2/10/1.5,
				s3/s2/15/0.8,
				s4/s3/15/1.5,
				s2/s4/30/1.5,
				t/s1/9/0.8,
				s3/t/9/0.8,
				tt/s1/9/0.8,
				s1/tt/9/0.8,
				tt/s2/9/0.8,
				s2/tt/9/0.8,
				tt/s3/9/0.8,
				s3/tt/9/0.8
			}{\path[draw, line width=\w pt] (\i) edge[bend left=\t] (\j);}
			\path[draw, line width=1.5pt] (s3) edge[bend right=30] (s1);
			\path[draw, dashed, line width=0.8pt] (s4) edge[bend right=50] (s1);
			\path[draw, dashed, line width=0.8pt] (s1) edge[bend right=20] (s2);
			\path[draw, dashed, line width=0.8pt] (s2) edge[bend left=10] (s3);
			\path[draw, dashed, line width=0.8pt] (s3) edge[bend left=10] (s4);			\end{tikzpicture}\caption*{(b)}\end{minipage}}
		\subfigure{\begin{minipage}[t]{0.3\linewidth}
				\centering\begin{tikzpicture}[scale=1]
				\coordinate [label=center:$V_2$] () at (2,2);
				\coordinate [label=center:$V_1$] () at (0,2);
				\filldraw[black](2,1) circle (3pt)node[label=right:$s_1$](s1){};
				\filldraw[black](2,-0.5) circle (3pt)node[](s2){};
				\filldraw[black](2,-2) circle (3pt)node[](s3){};
				\filldraw[black](2,-3.5) circle (3pt)node[label=right:$s_4$](s4){};
				\filldraw[black](0,-0) circle (3pt)node[label=left:$t$](t){};
				\filldraw[black](0,-2) circle (3pt)node[label=left:$t^{\prime}$](tt){};
				
				\foreach \i/\j/\t/\w in {
					s1/s2/0/1.5,
					s3/s2/0/0.8,
					s4/s3/15/1.5,
					s2/s4/30/1.5,
					t/s1/9/1.5,
					s3/t/9/1.5,
					tt/s1/9/0.8,
					s1/tt/9/0.8,
					tt/s2/9/0.8,
					s2/tt/9/0.8,
					tt/s3/9/0.8,
					s3/tt/9/0.8
				}{\path[draw, line width=\w pt] (\i) edge[bend left=\t] (\j);}
				\path[draw, line width=1.5pt] (s3) edge[bend right=30] (s1);
				\foreach \i/\j in {s1/t,t/s2,s2/t,t/s3}{\path[draw, dashed, line width=0.8pt] (\i) edge[bend left=9] (\j);}
				\path[draw, dashed, line width=0.8pt] (s3) edge[bend left=10] (s4);	
					\path[draw, dashed, line width=0.8pt] (s4) edge[bend right=50] (s1);
				\end{tikzpicture}\caption*{(c)}\end{minipage}}
	\caption{The split digraph $D=(V_1,V_2;A)$ with $V_1=\{t,t^{\prime}\}$ and $V_2=\{s_1,s_2,s_3,s_4\}$ such that there is a 2-cycle between every vertex in $V_1$ and every vertex in $\{s_1,s_2,s_3\}$, see (a). The vertices of $V_2$ in (a)-(c) are labeled from top to bottom. We split off $(s_1t,ts_2)$ and $(s_2t,ts_3)$ at $t$ to obtain $D^{\ast}$ and then $D^{\ast}\left\langle  V_{2}\right\rangle=D\left\langle  V_2\right\rangle\cup\{s_1s_2,s_2s_3\}$, see (b). Note that the dashed arc set $A_1$ and bold arc set $A_2$ form a strong arc decomposition of $D^{\ast}\left\langle  V_2\right\rangle$. Lifting the arcs $s_1s_2, s_2s_3$ to get $A_1^{\prime},A_2^{\prime}$ and adding a pair of arcs $s_3t,ts_1$ to $A_2^{\prime}$, we obtain a strong arc decomposition of $D\left\langle  V_2\cup\{t\}\right\rangle$,  which is shown by dashed and bold arcs in (c). It should be noted that $t^{\prime}$ is not covered by $A_1^{\prime}\cup A_2^{\prime}$, so \yw{by Lemma \ref{(D-X)->D},} we do not need to consider it.}
	\label{fig-lem25}
\end{figure}

%\begin{remark}\label{Remark-notallatmost2}
%	By the argument in the proof of Lemma \ref{lem-Dstar},  the lemma also holds if replace the condition `splitting off at most two pairs at every vertex in $V_1$' by `splitting off at most two pairs at every vertex in $V_1$ which not covered by $A_1^{\prime}\cap A_2^{\prime}$', where  $A_1^{\prime}$ and $A_2^{\prime}$ are obtained from a strong arc decomposition $A_1,A_2$ of $D^{\ast}\left\langle V_{2}\right\rangle$ by lifting all splitting arcs.
%\end{remark}

The next lemma covers  the cases where the semicomplete part of the split digraph has size at most 3 or it is one of the exceptions in Theorem \ref{thm-SDM}

\begin{lem}\label{lem-DV2small}
	Let $D=(V_1,V_2;A)$ be a split digraph such that every vertex of $V_1$ has out- and in-degree at least 3 in $D$. Then $D$ has a strong arc decomposition if one of the following statements holds.
	
	(i) $D$ is 2-arc-strong and $|V_2|\leq 3$.
	
	(ii) $D\left\langle  V_{2}\right\rangle$ is isomorphic to one of the digraphs shown in Figure \ref{fig-DMwithoutAD}.
\end{lem}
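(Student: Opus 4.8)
The plan is to reduce both cases, via the splitting-off machinery, to an application of Theorem \ref{thm-SDM} (or, when $|V_2|=3$, to an explicit decomposition of a four-vertex induced subdigraph). The two tools doing the work are Lemma \ref{lem-Dstar}, which transfers a strong arc decomposition of a semicomplete multidigraph on $V_2$ obtained by splitting back to $D$, and Lemma \ref{(D-X)->D}, which lets us ignore all but a few vertices of $V_1$ once a well-connected core has been decomposed.

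First I would pin down the local structure forced in case (i). Since $V_1$ is independent, every $v\in V_1$ has all of its in- and out-neighbours in $V_2$, so $3\le d^-(v)\le |V_2|\le 3$ and likewise for $d^+(v)$; hence $|V_2|=3$ and each $v\in V_1$ is joined by a digon to every vertex of $V_2=\{s_1,s_2,s_3\}$. I would then split on whether $D\langle V_2\rangle$ is strong. If it is strong it contains a Hamiltonian cycle $s_1s_2s_3s_1$, and for a fixed $t\in V_1$ one writes down an explicit strong arc decomposition of the four-vertex semicomplete digraph $D\langle V_2\cup\{t\}\rangle$ using this cycle together with the digon arcs at $t$ (for instance $A_1=\{s_1s_2,s_2s_3,s_3t,ts_1\}$ and $A_2=\{s_3s_1,s_1t,ts_3,ts_2,s_2t\}$, placing any remaining arcs of $D\langle V_2\rangle$ into $A_1$); since every other vertex of $V_1$ still has three in- and three out-neighbours inside $V_2$, Lemma \ref{(D-X)->D} extends this to all of $D$. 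If $D\langle V_2\rangle$ is not strong it has a source or a sink inside $V_2$, so by $2$-arc-strongness $|V_1|\ge 2$, and $D\langle V_2\rangle$ has only three or four arcs; by splitting off at most two pairs at each of two vertices of $V_1$ I would add the (at most three) missing arcs to complete the complete digraph on $\{s_1,s_2,s_3\}$, which decomposes into the two opposite $3$-cycles, and conclude with Lemma \ref{lem-Dstar}.

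For case (ii) the argument is shorter. Here $|V_2|=4$ and $D\langle V_2\rangle$ is $S_4$ or one of $S_{4,1},S_{4,2},S_{4,3}$, all of which share the $8$-arc simple structure of $S_4$ (possibly with some parallel copies) and so are missing the four arcs of one directed $4$-cycle. Each $t\in V_1$ has at least three in- and three out-neighbours among the four vertices of $V_2$, and a short counting argument (the four missing arcs have distinct tails and distinct heads, while $N^-(t)$ and $N^+(t)$ each omit at most one vertex) shows that at least two of these missing arcs $v_iv_j$ satisfy $v_i\in N^-(t)$ and $v_j\in N^+(t)$; one of them can therefore be created by splitting off the single pair $(v_it,tv_j)$. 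The resulting $D^{\ast}\langle V_2\rangle$ contains $S_4$ plus an arc in a direction absent from every exceptional digraph, hence has $9$ distinct adjacent pairs, so it is $2$-arc-strong and not isomorphic to any digraph of Figure \ref{fig-DMwithoutAD}. By Theorem \ref{thm-SDM} it has a strong arc decomposition, and Lemma \ref{lem-Dstar} (one split at one vertex of $V_1$) transfers it to $D$.

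The main obstacle, and the reason case (i) cannot simply be funnelled through Lemma \ref{lem-Dstar} like case (ii), is the smallness of $V_2$: a semicomplete multidigraph on three vertices admits a strong arc decomposition only if it contains both $3$-cycles, that is, the full complete digraph, and when $|V_1|=1$ only two arcs can be added by splitting, which is not enough. This is what forces the separate, explicit four-vertex construction in the strong subcase, and it is where the bookkeeping must be handled with care. A secondary subtlety, in the not-strong subcase, is that each vertex of $V_1$ offers only one copy of each arc to and from $V_2$, so two splits performed at the same vertex must use distinct tails and distinct heads; one checks that the at most three missing arcs can always be distributed over the (at least two) available vertices of $V_1$ subject to this constraint.
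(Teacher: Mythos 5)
Your proof is correct, and its overall architecture matches the paper's (establish that $|V_2|=3$ forces digons between $V_1$ and $V_2$, decompose a small core, then extend via Lemma \ref{(D-X)->D} or Lemma \ref{lem-Dstar}); indeed your decomposition in the strong subcase of (i) is arc-for-arc the one in the paper. Where you diverge is in the other two subcases. For the non-strong subcase of (i), the paper writes down an explicit strong arc decomposition of $D\left\langle V_2\cup\{t,t'\}\right\rangle$ by hand (the two closed walks $ts_1s_2t's_3t$ and $ts_2s_3t's_1t$ built on a Hamiltonian path $s_1s_2s_3$ of $D\left\langle V_2\right\rangle$), whereas you complete $D\left\langle V_2\right\rangle$ to the complete digraph on three vertices by splitting off the at most three missing arcs (correctly distributing them over two vertices of $V_1$ so that no arc is reused) and invoke Lemma \ref{lem-Dstar}. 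For (ii), the paper again gives an explicit decomposition of $D\left\langle V_2\cup\{t\}\right\rangle$ after reducing to $D\left\langle V_2\right\rangle=S_4$, while you create one arc of the missing reverse $4$-cycle by a single split (your counting argument that at least two of the four missing arcs are available at any $t\in V_1$ is sound), observe that the result has nine distinct adjacent ordered pairs and hence cannot be one of the exceptions, and then apply Theorem \ref{thm-SDM} followed by Lemma \ref{lem-Dstar}. Your route is slightly less self-contained -- it leans on the full strength of Theorem \ref{thm-SDM} and on the transfer mechanism of Lemma \ref{lem-Dstar} where the paper settles for elementary explicit constructions -- but it is more uniform, treating this lemma as just another instance of the splitting-off paradigm used throughout Section \ref{sec-prelim}, and it avoids having to verify strongness of the hand-built arc sets.
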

\begin{proof}
	Let $t$ be an arbitrary vertex of $V_1$. Observe that $|V_2|\geq 3$ as $t$ has in- and out-degree at least three in $D$. Further, if $|V_2|=3$, then there is a 2-cycle between $t$ and each vertex of $V_2$. 
	
	First we consider the case $|V_2|=3$. If $D\left\langle  V_{2}\right\rangle$ contains a 3-cycle, say $s_1s_2s_3s_1$, then the arcs of $ts_1s_2s_3t$ and $ts_3s_1t\cup ts_2t$ form a strong arc decomposition of  $D\left\langle  V_{2}\cup\{t\}\right\rangle$ and now the claim follows by Lemma \ref{(D-X)->D}. So we may assume that $D\left\langle  V_{2}\right\rangle$ contains no 3-cycle. Then there is a vertex of $V_2$ which has in-degree or out-degree zero in $D\left\langle  V_{2}\right\rangle$.  Since $D$ is 2-arc-strong, $V_1$ has size at least two. Let $t^{\prime}$ be a vertex of $V_1-\{t\}$ and let $s_1s_2s_3$ be a hamiltonian path of $D\left\langle  V_{2}\right\rangle$. Then the arcs of $ts_1s_2t^{\prime}s_3t$ and $ts_2s_3t^{\prime}s_1t$ form a strong arc decomposition of $D\left\langle  V_{2}\cup\{t,t^{\prime}\}\right\rangle$. Thus the claim follows by Lemma \ref{(D-X)->D} again.

To prove (ii) it suffices to show the case $D\left\langle  V_{2}\right\rangle=S_4$ as each digraph in Figure \ref{fig-DMwithoutAD} contains $S_4$ has a spanning subdigraph.  Since $t$ has in- and out-degree at least three in $D$, the vertex $t$ has an in-neighbor and an out-neighbor in $\{v_1,v_3\}$ (resp., in $\{v_2,v_4\}$). Let $A_1$ be the union of the four arcs between $t$ and such neighbors and arcs in $\{v_1v_3,v_3v_1,v_2v_4,v_4v_2\}$. As $t$ has in- and out-degree at least 3, there exist $t^-,t^+\in V_2$ such that $t^-t,tt^+$ is a pair of in- and out-arcs of $t$  in $A(D)-A_1$.  Let $A_2=\{t^-t,tt^+,v_1v_2,v_2v_3,v_3v_4,v_4v_1\}$. Then $A_1$ and $A_2$ form a strong arc decomposition of $D\left\langle  V_{2}\cup\{t\}\right\rangle$ and now the claim follows by Lemma \ref{(D-X)->D}.
\end{proof}

\begin{lem}\label{lem-D2starsamll}
	Let $D=(V_1,V_2;A)$ be a split digraph such that every vertex of $V_1$ has out- and in-degree at least 3 in $D$. Let $D^{\ast}$ be any directed multigraph obtained from $D$ by splitting off some pairs at every vertex in $V_1$. If $D^{\ast}\left\langle  V_{2}\right\rangle$ is isomorphic to one of the digraphs in Figure \ref{fig-DMwithoutAD}, then $D$ has a strong arc decomposition.
\end{lem}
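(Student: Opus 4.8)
The plan is to exploit that every exceptional digraph in Figure \ref{fig-DMwithoutAD} has exactly four vertices and contains $S_4$ as a spanning subdigraph. Hence $|V_2|=4$ and $D\langle V_2\rangle$ is a semicomplete digraph on four vertices, while $D^{\ast}\langle V_2\rangle$ contains a spanning copy of $S_4$. I would first dispose of two easy cases. If $D\langle V_2\rangle$ itself has a strong arc decomposition, then, since every vertex of $V_1$ has at least three (hence at least two) in- and out-neighbours in $V_2$, Lemma \ref{(D-X)->D} applied with $X=V_2$ immediately yields a strong arc decomposition of $D$. If $D\langle V_2\rangle$ is isomorphic to one of the digraphs in Figure \ref{fig-DMwithoutAD}, then Lemma \ref{lem-DV2small}(ii) finishes the proof. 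By Theorem \ref{thm-SDM} the only remaining possibility is that $D\langle V_2\rangle$ is semicomplete on four vertices but \emph{not} $2$-arc-strong; this is the case where the splitting arcs are genuinely needed to restore connectivity, and it is the heart of the argument.

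In that remaining case I would fix a spanning copy of $S_4$ inside $D^{\ast}\langle V_2\rangle$, on a labelled cycle $v_1v_2v_3v_4v_1$ together with both diagonals $v_1v_3,v_3v_1,v_2v_4,v_4v_2$, chosen so as to use as few splitting arcs as possible. Because $D\langle V_2\rangle$ is not $2$-arc-strong it cannot already contain a spanning $S_4$, so at least one arc of this copy is a splitting arc; let $S'\subseteq V_1$ be the small set of vertices carrying these splitting arcs, and lift each back to its path $x\to t\to y$. The target is a strong arc decomposition of $D\langle V_2\cup S'\rangle$, after which Lemma \ref{(D-X)->D} with $X=V_2\cup S'$ extends it to all of $D$, the hypothesis of that lemma holding since each vertex of $V_1\setminus S'$ still has at least three in- and out-neighbours in $V_2$.

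To build the decomposition of $D\langle V_2\cup S'\rangle$ I would imitate the construction in the proof of Lemma \ref{lem-DV2small}(ii): put the two diagonal $2$-cycles $\{v_1v_3,v_3v_1\}$ and $\{v_2v_4,v_4v_2\}$ into one part $A_1$ and the Hamiltonian cycle $v_1v_2v_3v_4v_1$ into the other part $A_2$, each time replacing a splitting arc by its lifted path so that the carrying vertex of $S'$ lands in the corresponding part. The two diagonal $2$-cycles are not strong on their own, so the decisive point is to route one vertex $t\in S'$ through both pairs $\{v_1,v_3\}$ and $\{v_2,v_4\}$ to act as a hub that merges them; this is always possible because the min-degree-$3$ condition forces each $t$ to have an in- and an out-neighbour in each of the two pairs (missing at most one of the four vertices in each direction). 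A vertex of $S'$ that does not already lie on a lifted path in a given part is then inserted into that part through one spare in-arc and one spare out-arc, which exist by the same degree condition and keep the part strong and spanning.

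The main obstacle I expect is precisely this last bookkeeping: guaranteeing that after lifting the splitting arcs and placing each vertex of $S'$ into both parts, the two parts remain arc-disjoint, strong, and spanning simultaneously, since at a hub vertex of minimum degree exactly three in and three out the available arcs are used up tightly. The delicate sub-cases are when a chosen splitting arc is one of the four cycle arcs rather than a diagonal (so a vertex of $S'$ must sit on the Hamiltonian cycle of $A_2$), and when making $A_1$ strong, which hinges on the hub connecting the two otherwise disjoint $2$-cycles; one checks that a lifted diagonal path already supplies the hub's link to one pair, so only the other pair must be covered by spare arcs. Since $|V_2|=4$ and there are only a handful of splitting arcs, there are finitely many configurations---distinguished by which of the four exceptions $D^{\ast}\langle V_2\rangle$ is and by which arcs of the spanning $S_4$ are splitting arcs---and I would verify each of them directly.
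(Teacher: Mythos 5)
Your proposal follows essentially the same route as the paper: reduce to the case where a chosen spanning copy of $S_4$ in $D^{\ast}\langle V_2\rangle$ has splitting arcs only among the diagonals, lift those, put the Hamiltonian cycle $v_1v_2v_3v_4v_1$ in one part and the diagonal $2$-cycles together with the lifted paths in the other, and use a vertex of $V_1$ as a hub joining the two $2$-cycles (its in- and out-neighbours in each of $\{v_1,v_3\}$ and $\{v_2,v_4\}$ being guaranteed by the degree-$3$ hypothesis), with the same spare-arc count attaching each vertex of $S'$ to the other part. The only point worth adding is that your flagged ``delicate sub-case'' of a cycle arc being a splitting arc never actually occurs: since $D\langle V_2\rangle$ is semicomplete and in every exceptional digraph all arcs between consecutive $v_i,v_{i+1}$ go from $v_i$ to $v_{i+1}$, an original copy of each cycle arc exists and your minimal choice of the spanning $S_4$ uses it --- which is exactly the observation with which the paper opens its proof.
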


\begin{proof}
	 Since $D\left\langle  V_{2}\right\rangle$ is a semicomplete digraph, it contains at least one arc between every pair of distinct vertices of $V_2$. So we may assume that the arcs in $A_1=\{v_1v_2,v_2v_3,v_3v_4,v_4v_1\}$ are original arcs, in other words, they belong to $D\left\langle  V_{2}\right\rangle$.  By Lemma \ref{lem-DV2small} (ii), we may further assume that $D^{\ast}\left\langle  V_{2}\right\rangle$ contains at least one splitting arc.  Observe that as $D\left\langle  V_{2}\right\rangle$ has no parallel arcs, at least one arc from each pair of parallel arcs must be a splitting arc in  $D^{\ast}\left\langle  V_{2}\right\rangle$. So the arc from $v_1$ to $v_2$ in $A(S_{4,2})-A_1$ is a splitting arc. 
	 
	Suppose first that there is no splitting arc of $D^{\ast}\left\langle  V_{2}\right\rangle$ with both end-vertices in $\{v_1,v_3\}$ or in $\{v_2,v_4\}$. Then $D^{\ast}\left\langle  V_{2}\right\rangle$ is isomorphic to the digraph $S_{4,2}$ and the arc from $v_1$ to $v_2$ in $A(S_{4,2})-A_1$ is the only splitting arc. Let $(v_1t,tv_2)$ be its corresponding splitting pair and let $t^-t,tt^+$ be two arcs in $D$ with $t^-\in\{v_2,v_4\}$ and $t^+\in\{v_1,v_3\}$. The arcs $t^-t$ and $tt^+$ exist as $t$ has out-degree and in-degree at least three in $D$. Moreover, there exists a pair of in- and out-arcs $e^-,e^+$ of $t$ in $D$ which are distinct from the arcs in $\{v_1t,tv_2,t^-t,tt^+\}$. Then $\{e^+,e^-\}\cup A_1$ and $\{v_1t,tv_2,t^-t,tt^+,v_1v_3,v_3v_1,v_2v_4,v_4v_2\}$ is a strong arc decomposition of $D\left\langle  V_{2}\cup\{t\}\right\rangle$ and the claim follows by Lemma \ref{(D-X)->D}.

        \iffalse
	 \begin{figure}[!h]
		\centering
		\includegraphics[scale=0.15]{lem26.jpg}
		\caption{The case that $C_1$ and $C_2$ are vertex disjoint in Lemma \ref{lem-D2starsamll}.}
		\label{fig-lem26}
	 \end{figure}
         \fi

	If there is a splitting arc of $D^{\ast}\left\langle  V_{2}\right\rangle$ with both end-vertices in $\{v_1,v_3\}$ (resp., in $\{v_2,v_4\}$), then let $ab$ with $\{a,b\}= \{v_1,v_3\}$ (resp., $cd$ with $\{c,d\}=\{v_2,v_4\}$) be such an arc. We may further assume that  $(at_1,t_1b)$  and $(ct_2,t_2d)$ are the corresponding splitting pairs of $ab$ and $cd$, respectively, if $ab$ or $cd$ exists.   Clearly, $C_1=aba$ or $C_1=at_1ba$ (resp., $C_2=cdc$ or $C_2=ct_2dc$) is a cycle of the original split digraph $D$. Observe that $C_1$ and $C_2$ are vertex disjoint or they share exactly one common vertex.  For the case that $C_1$ and $C_2$ share one vertex, both $ab$ and $cd$ exist and $t_1=t_2$. Let $t_1^-t_1,t_1t_1^+$ be a pair of arcs in $A(D)-A(C_1\cup C_2)$. Then $A_1\cup\{t_1^-t_1,t_1t_1^+\}$ and $A(C_1\cup C_2)$ is a strong arc decomposition of $D\left\langle  V_{2}\cup\{t_1\}\right\rangle$ and the claim follows by Lemma \ref{(D-X)->D}. 
	
	It remains to consider the case where $C_1$ and $C_2$ are vertex disjoint. % (see Figure \ref{fig-lem26}). 
	If $ab$ exits, let $t_1^-t_1, t_1t_1^+$ be two arcs with $t_1^-,t_1^+$ in $\{v_2,v_4\}$ and, if $cd$ exits, let $t_2^-t_2, t_2t_2^+$ be two arcs with $t_2^-,t_2^+$ in $\{v_1,v_3\}$.  The arcs $t_i^-t_i, t_it_i^+$ $i\in[2]$ exist as every vertex of $V_1$ has out- and in-degree at least three.  Since one of $ab$ and $cd$ is a splitting arc, one of $t_1$ and $t_2$, say $t_j$, exists. Let $A_2=A(C_1\cup C_2)\cup\{t_j^-t_j,t_jt_j^+\}$ and let $e^+,e^-$ be an out- and in-arc of $t_j$ in $A(D)-A_2$. Then $A_1\cup \{e^+,e^-\}$ (or $A_1\cup \{e^+,e^-,t_{3-j}^-t_{3-j},t_{3-j}t_{3-j}^+\}$ if $t_{3-j}$ exists) and $A_2$ is a strong arc decomposition of $D\left\langle  V_{2}\cup\{t_1\}\cup\{t_2\}\right\rangle$. By Lemma \ref{(D-X)->D}, $D$ has a strong arc decomposition.
\end{proof}

Combining  Theorem \ref{thm-SDM} and Lemmas \ref{lem-Dstar}, \ref{lem-D2starsamll}, we obtain the following result which indicates that what we seek is a number of arc-disjoint paths such that when we split off these, the semicomplete part of the resulting directed multigraph is 2-arc-strong.

\begin{coro}\label{coro-DV2star2ArcStr}
	Let $D=(V_1,V_2;A)$ be a split digraph such that every vertex of $V_1$ has out- and in-degree at least 3 in $D$. Let $D^{\ast}$ be a directed multigraph obtained from $D$ by splitting off at most two pairs of every vertex in $V_1$. If $D^{\ast}\left\langle  V_{2}\right\rangle$ is 2-arc-strong, then $D$ has a strong arc decomposition.
\end{coro}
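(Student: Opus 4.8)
The plan is to push the whole problem onto the semicomplete directed multigraph $D^{\ast}\langle V_2\rangle$, apply the characterization in Theorem~\ref{thm-SDM}, and then transport the resulting decomposition (or the exceptional structure) back to $D$ via the lifting lemmas. Two preliminary observations make this possible. Splitting off a pair at a vertex of $V_1$ replaces two arcs incident with that vertex by a single arc whose ends lie in $V_2$; hence $D^{\ast}\langle V_2\rangle$ contains $D\langle V_2\rangle$ and is again semicomplete. Also, each vertex of $V_1$ has out-degree at least $3$ in $D$ and, since $V_1$ is independent, all of its out-neighbours lie in $V_2$, so $|V_2|\ge 3$. Thus $D^{\ast}\langle V_2\rangle$ is a $2$-arc-strong semicomplete directed multigraph on at least three vertices, exactly the kind of object Theorem~\ref{thm-SDM} is about.

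I would first dispose of the principal case $|V_2|\ge 4$, where Theorem~\ref{thm-SDM} applies directly. By that theorem, either $D^{\ast}\langle V_2\rangle$ has a strong arc decomposition, or it is isomorphic to one of $S_4,S_{4,1},S_{4,2},S_{4,3}$. In the first subcase the hypothesis that $D^{\ast}$ arises from $D$ by splitting off at most two pairs at each vertex of $V_1$ is precisely what Lemma~\ref{lem-Dstar} requires, so that lemma converts the decomposition of $D^{\ast}\langle V_2\rangle$ into one of $D$. In the second subcase Lemma~\ref{lem-D2starsamll} produces a strong arc decomposition of $D$ outright. This settles every split digraph with $|V_2|\ge 4$.

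The remaining possibility is $|V_2|=3$, which needs separate care because Theorem~\ref{thm-SDM} is stated only for at least four vertices, and this is the step I expect to be the main obstacle. Here I would appeal to Lemma~\ref{lem-DV2small}(i), whose hypotheses match ours except that it additionally requires $D$ itself to be $2$-arc-strong; the plan is to establish this by a cut-by-cut check. For a proper nonempty $S\subseteq V_1\cup V_2$ with $\emptyset\neq S\cap V_2\neq V_2$, each arc of $D^{\ast}\langle V_2\rangle$ leaving $S\cap V_2$ is either an original arc of $D$ leaving $S$, or a splitting arc whose length-two lift through some vertex of $V_1$ contains exactly one arc leaving $S$; since distinct arcs of $D^{\ast}\langle V_2\rangle$ lift to arc-disjoint configurations in $D$, the $2$-arc-strongness of $D^{\ast}\langle V_2\rangle$ forces at least two arcs of $D$ out of $S$, and symmetrically at least two into $S$. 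For the remaining cuts, with $S\cap V_2\in\{\emptyset,V_2\}$, the bound is immediate from the degree hypothesis, since every vertex of $V_1$ sends at least three arcs to $V_2$ and receives at least three from $V_2$. Hence $D$ is $2$-arc-strong, Lemma~\ref{lem-DV2small}(i) finishes this case, and the corollary follows.
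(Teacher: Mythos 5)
Your proof is correct and follows essentially the same route as the paper, which obtains the corollary directly by combining Theorem \ref{thm-SDM} with Lemma \ref{lem-Dstar} (when $D^{\ast}\left\langle V_2\right\rangle$ has a strong arc decomposition) and Lemma \ref{lem-D2starsamll} (when it is one of the exceptions). Your separate treatment of $|V_2|=3$ --- deducing that $D$ is 2-arc-strong by the cut-counting/lifting argument and then invoking Lemma \ref{lem-DV2small}(i) --- is a sound and welcome extra step that the paper leaves implicit, since in the paper the corollary is only applied after $|V_2|\geq 4$ has already been secured.
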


\section{The proof of Theorem \ref{thm-Arcdecsplit}}\label{sec-proof}

For convenience we repeat the statement of the theorem.\\

\noindent {\bf Theorem \ref{thm-Arcdecsplit}}
Let $D=(V_1,V_2;A)$ be a 2-arc-strong split digraph such that $V_1$ is an independent set and the subdigraph induced by $V_2$ is semicomplete. If every vertex of $V_1$ has both out- and in-degree at least 3 in $D$, then $D$ has a strong arc decomposition. 

\bigskip
 We first have the following simple observations, which the first follows immediately by considering whether the vertex $x$ belongs to $S_k$ or not, and the second one can be checked easily as $xy$ must be a backward arc of $S$.
 
 \begin{obse}\label{obse-smallvertex}
 	Suppose that $xy$ is the only out-arc of some vertex $x$ in $D\left\langle  V_{2}\right\rangle$. Then one of the following holds.
 	
 	(i) $V_{S_{k-1}}=\{x\}$ and $V(S_k)=\{y\}$.
 	
 	(ii) $x,y\in V(S_k)$ and $xy$ is a cut-arc of $S_k$.
 \end{obse}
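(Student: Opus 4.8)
The plan is to read off Observation~\ref{obse-smallvertex} directly from the strong-component structure of the semicomplete digraph $S=D\langle V_2\rangle$, whose strong components I take to be $S_1,\dots,S_k$ listed in the unique acyclic ordering, so that there is no arc from $S_j$ to $S_i$ when $j>i$ and $S_k$ is the terminal strong component. The one feature I would use repeatedly is that $S$ is semicomplete: whenever $x$ lies in a component $S_i$ and $w$ lies in a later component $S_m$ with $m>i$, the pair is adjacent, and the arc must be $x\to w$, since an arc $w\to x$ would go from $S_m$ to $S_i$ and violate the acyclic ordering. Following the hint, I would split the argument according to whether $x\in S_k$.

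If $x\in S_k$, then because $S_k$ is terminal no out-arc of $x$ can leave $S_k$ (such an arc would go from $S_k$ to an earlier component); hence the unique out-arc $xy$ has its head inside $S_k$, giving $x,y\in V(S_k)$, and in particular $|V(S_k)|\ge 2$. Since $S_k$ is strong while $xy$ is the only out-arc of $x$, deleting $xy$ leaves $x$ with out-degree $0$ in $S_k$, so $S_k-xy$ is not strong and $xy$ is a cut-arc of $S_k$. This is exactly case~(ii).

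If $x\notin S_k$, say $x\in S_i$ with $i<k$, then by the adjacency observation above every vertex of $S_k$ is an out-neighbour of $x$, and uniqueness of $xy$ forces $S_k$ to contain only the vertex $y$, so $V(S_k)=\{y\}$. The same adjacency observation rules out any intermediate component: if some $S_m$ satisfied $i<m<k$, then $x$ would send an out-arc into $S_m$ different from $xy$, a contradiction; hence $i=k-1$. Finally $x$ has no out-neighbour inside its own component $S_{k-1}$ (its only out-arc lands in $S_k$), so strongness of $S_{k-1}$ forces $|V(S_{k-1})|=1$, i.e.\ $V_{S_{k-1}}=\{x\}$. Together with $V(S_k)=\{y\}$ this yields case~(i).

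I do not expect a real obstacle here, as the statement is essentially a bookkeeping consequence of semicompleteness; the only points needing care are fixing the direction convention of the acyclic ordering so that the forced adjacencies are correctly oriented out of $x$, and checking the degenerate sizes, namely that $|V(S_k)|\ge 2$ in the first case while both $S_{k-1}$ and $S_k$ collapse to singletons in the second. The companion observation mentioned alongside this one should go through in the same spirit once one records that the arc in question is forced to be a backward arc of $S$, which I would settle by the analogous backward-arc analysis in the nice decomposition.
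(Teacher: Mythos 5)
Your proof is correct and follows exactly the route the paper indicates: the paper dispenses with Observation \ref{obse-smallvertex} in one line, saying it ``follows immediately by considering whether the vertex $x$ belongs to $S_k$ or not,'' and your case split on $x\in S_k$ versus $x\notin S_k$ is precisely that argument, carried out in full with the correct handling of the degenerate component sizes.
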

 
 Recall that the backward arcs of a nice decomposition are exactly the cut-arcs.
 \begin{obse}\label{obse-smallvtx-in-gd}
 	Let $S$ be a strong semicomplete digraph with $|V(S)|\geq 4$ and let $(U_1,\ldots, U_l)$ be the nice decomposition of $S$ and $(x_1y_1,\ldots, x_ry_r)$  the natural ordering of its backward arcs. If $xy\in A(S)$ is the only out-arc of $x$ in $S$, then one of the following holds.
 	
 	(i) $U_l=\{x\}=\{x_1\}$ and $y=y_1$.
 	
 	(ii) $U_l=\{x_1\}$, $U_{l-1}=\{x_2\}=\{y_1\}=\{x\}$ and $y=y_2$.
 \end{obse}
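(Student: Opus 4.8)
The plan is to first identify the single out-arc $xy$ as a backward arc. Since $S$ is strong and $x$ has out-degree one, deleting $xy$ leaves $x$ with out-degree zero, so $S-xy$ has no path out of $x$ and is not strong; hence $xy$ is a cut-arc. Because the cut-arcs of $S$ are exactly the backward arcs of its nice decomposition, we get $xy=x_py_p$ for some $p$, i.e.\ $x=x_p$, $y=y_p$, and in particular $ind(x)>ind(y)$. The whole argument then reduces to pinning down $p$ and describing the parts surrounding $x$.

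Next I would show that $x$ sits alone in its part, namely $U_{ind(x)}=\{x\}$. If instead $|U_{ind(x)}|\ge 2$, then $S\langle U_{ind(x)}\rangle$ is strong, so $x$ has an out-neighbour $z$ inside $U_{ind(x)}$; as $ind(z)=ind(x)>ind(y)$ we have $z\ne y$, giving $x$ a second out-arc and contradicting that $x$ has out-degree one.

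The crux is to bound how many vertices lie above $x$. Since $S$ is semicomplete, every vertex $v$ with $ind(v)>ind(x)$ is adjacent to $x$; as the only out-arc of $x$ runs to the lower-indexed vertex $y$, each such arc must be $v\to x$ and is therefore a backward arc with head $x$. But the heads $y_1,\ldots,y_r$ in the natural ordering occupy strictly decreasing components by Proposition \ref{prop-nicedecom}(ii), so no two backward arcs share a head; hence at most one vertex has index greater than $ind(x)$. Combined with $x_1\in U_l$, this leaves exactly two configurations. If $ind(x)=l$, then $x$ is the unique tail in $U_l$, so $x=x_1$, $y=y_1$ and $U_l=\{x\}=\{x_1\}$, which is case (i). Otherwise $ind(x)=l-1$ and $U_l$ consists of the single vertex $x_1$; the arc $x_1\to x$ is backward with head $x$, forcing $y_1=x$, and then $ind(y_1)\le ind(x_2)<ind(x_1)=l$ from Proposition \ref{prop-nicedecom}(ii) squeezes $ind(x_2)$ to $l-1$, so $x_2=x$, whence $p=2$ and $y=y_2$, which is case (ii).

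I expect the only delicate point to be this counting step: turning ``no two backward arcs share a head'' into ``at most one vertex lies above $x$'', and then matching the two resulting configurations consistently with $x_1\in U_l$ and the interleaving inequalities of Proposition \ref{prop-nicedecom}. Everything else is routine once $xy$ has been recognised as a backward arc and the singleton-part observation is in hand.
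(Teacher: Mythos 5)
Your proposal is correct and follows the same route the paper intends: the paper only remarks that the statement ``can be checked easily as $xy$ must be a backward arc of $S$,'' and your argument is exactly that check carried out in full (identifying $xy$ as a cut-arc, hence a backward arc, showing $U_{ind(x)}=\{x\}$, and using the distinctness of the heads $y_1,\ldots,y_r$ from Proposition \ref{prop-nicedecom}(ii) to bound the vertices above $x$). All steps are valid, so there is nothing to add.
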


By Lemma \ref{lem-DV2small} (i), we may assume that $|V_2|\geq 4$ and by Corollary \ref{coro-DV2star2ArcStr}, we may assume that $D\left\langle  V_{2}\right\rangle$ is not 2-arc-strong. Let $S_1,\ldots, S_k$ $(k\geq 1)$ be the acyclic ordering of the strong components of $D\left\langle  V_{2}\right\rangle$. If $|V(S_k)|\geq 4$, then by Theorem \ref{thm-nicedecom}, $V(S_k)$ has a nice decomposition. Let $(U_{1},\ldots, U_{l})$ be its nice decomposition and let $(x_1y_1,\ldots, x_ry_r)$ be the natural ordering of its backward arcs.  Similarly, suppose that  $(W_{1},\ldots, W_{p})$ is the nice decomposition of $V(S_1)$ when $|V(S_1)|\geq 4$.\\

\noindent{}Let the subsets $X,Y$ of $V_2$ be defined as follows. Note that $X\neq Y$ since $D\left\langle  V_{2}\right\rangle$ is either non-strong or it is strong and has at least one cut-arc.

$$X=\begin{cases}
U_{l}, &\mbox{if } |V(S_k)|\geq 4;\\
V(S_k), &\mbox{otherwise}.
\end{cases} \mbox{ \;\;and \;\;} Y=\begin{cases}
W_{1}, &\mbox{if } |V(S_1)|\geq 4;\\
V(S_1), &\mbox{otherwise}.
\end{cases}$$

	\begin{lem}\label{lem-DPstrong}
		For each $(X,Y)$-path $P$ in $D$, the directed multigraph $D_{P}\left\langle  V_2\right\rangle$ obtained from $D\left\langle  V_2\right\rangle$ by splitting off $P$ is strong. 
	\end{lem}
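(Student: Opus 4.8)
The plan is to show that splitting off $P$ adds to $D\langle V_2\rangle$ a directed path from the starting vertex $x\in X$ of $P$ to the ending vertex $y\in Y$ of $P$, and that this single ``backward'' path is exactly the connection needed to make the terminal and initial parts of the semicomplete digraph $D\langle V_2\rangle$ mutually reachable.

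First I would record that splitting off a pair $(t^-t,tt^+)$ at a vertex $t\in V_1$ (Definition \ref{def-splitting}) only deletes arcs incident to $t$ and adds the arc $t^-t^+$; since every arc of $D\langle V_2\rangle$ has both ends in $V_2$, none of the deleted arcs lie in $D\langle V_2\rangle$. Hence $D_P\langle V_2\rangle$ is obtained from $D\langle V_2\rangle$ by \emph{only adding} splitting arcs, so $D\langle V_2\rangle$ is a spanning subdigraph of $D_P\langle V_2\rangle$. Next, because $V_1$ is an independent set, no two consecutive vertices of $P$ both lie in $V_1$, so every $V_1$-vertex $t$ on $P$ has its predecessor $t^-$ and its successor $t^+$ in $V_2$; in particular each split pair really has the form $(t^-t,tt^+)$ with $t^-,t^+\in V_2$, so the operation is well defined. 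Short-circuiting each such $t$ by its splitting arc $t^-t^+$ turns $P$ into a directed path $P'$ from $x$ to $y$, all of whose vertices lie in $V_2$ and all of whose arcs lie in $D_P\langle V_2\rangle$ (using the original $V_2$-arcs of $P$ together with the splitting arcs). Thus $D_P\langle V_2\rangle$ contains a directed $(x,y)$-path with $x\in X\subseteq V(S_k)$ and $y\in Y\subseteq V(S_1)$.

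The second ingredient is a reachability statement in $D\langle V_2\rangle$. Since $D\langle V_2\rangle$ is semicomplete and $S_1,\ldots,S_k$ is the acyclic ordering of its strong components, every vertex of $S_i$ has an arc to every vertex of $S_j$ whenever $i<j$. Consequently every vertex of $V_2$ can reach $x$ (reach a vertex of the terminal component $S_k$ by a forward arc, then use strong connectivity inside $S_k$), and $y$ can reach every vertex of $V_2$ (use strong connectivity inside the initial component $S_1$, then forward arcs into later components). Both facts already hold in $D\langle V_2\rangle$, hence in its supergraph $D_P\langle V_2\rangle$.

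Combining the two ingredients finishes the proof: for any ordered pair $u,v$ of vertices of $V_2$, concatenating a walk from $u$ to $x$, the path $P'$ from $x$ to $y$, and a walk from $y$ to $v$ gives a $u$--$v$ walk in $D_P\langle V_2\rangle$, so $D_P\langle V_2\rangle$ is strong. (When $D\langle V_2\rangle$ is itself strong, i.e.\ $k=1$, the statement is immediate since adding arcs preserves strong connectivity, and the reachability argument covers this case uniformly.) I expect no serious obstacle here; the only points needing care are the bookkeeping that each split pair is genuinely of the form $(t^-t,tt^+)$ with both ends in $V_2$ so that $P'$ is produced, and the observation that splitting off never destroys arcs of $D\langle V_2\rangle$. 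Once these are in place, the conclusion is forced by the terminal position of $x$ in $S_k$ and the initial position of $y$ in $S_1$.
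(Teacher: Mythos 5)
Your proposal is correct and follows essentially the same route as the paper's proof: observe that splitting off only adds arcs to $D\left\langle V_2\right\rangle$, that $P$ becomes an $(x,y)$-path $P'$ in $D_P\left\langle V_2\right\rangle$ with $x$ in the terminal and $y$ in the initial strong component, and then concatenate a walk to $x$, the path $P'$, and a walk from $y$. The extra bookkeeping you include (that consecutive $V_1$-vertices cannot occur on $P$, so each split pair has both outer ends in $V_2$) is a correct and harmless refinement of the same argument.
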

\begin{proof}
	Observe that $D_{P}\left\langle  V_2\right\rangle$ can be obtained from $D\left\langle  V_2\right\rangle$ by adding some splitting arcs.  So we may assume that $D\left\langle  V_2\right\rangle$ is non-strong since otherwise there is nothing to prove. 
	
	Suppose that $P$ starts from $a$ and ends at $b$. Clearly, $a\in X\subseteq V(S_k)$ and $b\in Y\subseteq V(S_1)$. Further, for any vertex $x\in V_2$, it can reach $a$ by the arc $xa$ \yw{if $x\notin V(S_k)$} or an $(x,a)$-path in $S_k$, similarly, any vertex $y\in V_2$ can reached from  $b$. Then there exists an $(x,y)$-path consisting of such $(x,a)$-, $(b,y)$-paths and \yw{the} path $P^{\prime}$, where $P^{\prime}$ is the path in $D_{P}\left\langle  V_2\right\rangle$ which is obtained by splitting off $P$. In other words, for any vertices $x,y$, there is an $(x,y)$-path in $D_{P}\left\langle  V_2\right\rangle$ and \yw{thus} $D_{P}\left\langle  V_2\right\rangle$ is strong.
\end{proof}

Suppose that $\mathcal{Q}=(Q_1,\ldots,Q_{\alpha})$ ($\alpha\geq 2$) is an ordering of arc-disjoint paths such that $Q_1$ and $Q_2$ are $(X,Y)$-paths and $Q_i=u_it_iv_i$ ($i\geq 3$) with $t_i\in V_1$. For convenience, we also regard $\mathcal{Q}$ as a set of those paths. The set $\mathcal{Q}$ is called \textbf{$\gamma$-feasible} if every vertex of $V_1$ belongs to at most $\gamma$ paths of $\mathcal{Q}$. \textbf{Splitting off $\mathcal{Q}$} means that split off all paths in $\mathcal{Q}$. We often use the symbol $D_{\mathcal{Q}}$ to denote the directed multigraph obtained from $D$ by splitting off all paths in $\mathcal{Q}$.

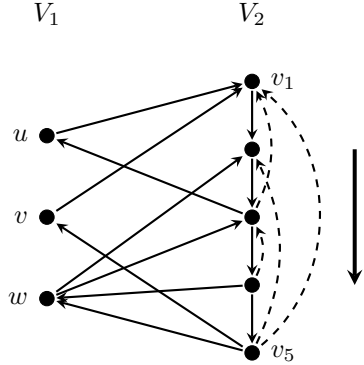
\begin{figure}[H]
	\centering\begin{tikzpicture}[scale=0.9]
	\coordinate [label=center:$V_2$] () at (2,2);
	\coordinate [label=center:$V_1$] () at (-1,2);
	\draw[line width=1.5pt] (3.5,0) -- (3.5,-2); 
	
	\filldraw[black](2,1) circle (3pt)node[label=right:$v_1$](v1){};
	\filldraw[black](2,0) circle (3pt)node[](v2){};
	\filldraw[black](2,-1) circle (3pt)node[](v3){};
	\filldraw[black](2,-2) circle (3pt)node[](v4){};
	\filldraw[black](2,-3) circle (3pt)node[label=right:$v_5$](v5){};

\filldraw[black](-1,0.2) circle (3pt)node[label=left:$u$](u){};
\filldraw[black](-1,-1) circle (3pt)node[label=left:$v$](v){};
\filldraw[black](-1,-2.2) circle (3pt)node[label=left:$w$](w){};

	\foreach \i/\j in {v1/v2,v2/v3,v3/v4,v4/v5,v5/w,v4/w,w/v3,w/v2,v5/v,v/v1,v3/u,u/v1}{\path[draw, line width=0.8pt] (\i) edge[] (\j);}
	\foreach \i/\j in {v5/v2,v3/v1,v4/v3}{\path[draw, dashed, line width=0.8pt] (\i) edge[bend right=25] (\j);}
	\path[draw, dashed, line width=0.8pt] (v5) edge[bend right=50] (v1);
	\end{tikzpicture}
	\caption{An illustration of the splitting off operation on a 2-feasible set $\mathcal{Q}$. $D\left\langle  V_2\right\rangle$ is a transitive tournament of order 5 with vertices $v_1,\ldots,v_5$ and all arcs from top to bottom.  So $X=V(S_k)=\{v_5\}$  and $Y=\{v_1\}$. The 2-feasible set $\mathcal{Q}$ consists of three paths $Q_1,Q_2$ and $Q_3$, where $Q_1=v_5wv_2v_3uv_1$, $Q_2=v_5vv_1$ and $Q_3=v_4wv_3$. The dashed arcs are the splitting arcs obtained by splitting off $\mathcal{Q}$. Note that $D_{\mathcal{Q}}\left\langle  V_2\right\rangle$ can be constructed from $D\left\langle  V_2\right\rangle$ by adding all dashed arcs.}
	\label{fig-fesibleset}
\end{figure}

It follows by Lemma \ref{lem-DPstrong} that every vertex of $V_2$ has both out- and in-degree at least one in $D_{\mathcal{Q}}\left\langle  V_2\right\rangle$. Let $W_{\mathcal{Q}}^+\subseteq V_2$ (resp., $W_{\mathcal{Q}}^-\subseteq V_2$) denote the set of vertices with out- (resp., in-)degree exactly one in $D_{\mathcal{Q}}\left\langle  V_2\right\rangle$. Next we show that if $W_{\mathcal{Q}}^+=W_{\mathcal{Q}}^-=\emptyset$, then either $D_{\mathcal{Q}}\left\langle  V_{2}\right\rangle$ is 2-arc-strong or it contains at most two cut-arcs.

\begin{lem}\label{lem-cutarc}
	Let $Q=(Q_1,\ldots,Q_{\alpha})$ ($\alpha\geq 2$) be a $\gamma$-feasible set. Suppose that $W_{\mathcal{Q}}^+=W_{\mathcal{Q}}^-=\emptyset$. If $D_{\mathcal{Q}}\left\langle  V_{2}\right\rangle$ is not 2-arc-strong, then every cut-arc $xy$ \yw{of $D_{\mathcal{Q}}\left\langle  V_{2}\right\rangle$} satisfies one of the following statements.
	
	(i) $xy\in \yw{A(S_k)}$, $|V(S_k)|=3$ and the two $(X,Y)$-paths $Q_1,Q_2$ in $\mathcal{Q}$ are starting from the vertex $y$;
	
	(ii) $xy\in \yw{A(S_1)}$, $|V(S_1)|=3$ and the two $(X,Y)$-paths $Q_1,Q_2$  in $\mathcal{Q}$ are ending at the the vertex $x$;
	
	Moreover, there exists a $\gamma$-feasible set $\mathcal{Q^{\prime}}$ such that $D_{\mathcal{Q^{\prime}}}\left\langle  V_{2}\right\rangle$ is  2-arc-strong.
\end{lem}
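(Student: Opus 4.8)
The plan is to study the cut-arcs of the strong semicomplete multigraph $H:=D_{\mathcal{Q}}\langle V_2\rangle$ directly. First I record the two global facts I will use throughout: $H$ is strong by Lemma~\ref{lem-DPstrong} (it contains the strong digraph obtained by splitting off $Q_1$ alone), and the hypothesis $W_{\mathcal{Q}}^+=W_{\mathcal{Q}}^-=\emptyset$ says exactly that every vertex of $V_2$ has in- and out-degree at least $2$ in $H$. Now assume $H$ is not $2$-arc-strong and fix a cut-arc $xy$; let $(L,R)$ be the associated partition of $V_2$ with $x\in L$, $y\in R$, so that $xy$ is the unique arc of $H$ from $L$ to $R$ (and there is at least one arc from $R$ to $L$).

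The first step is a structural reduction of the cut $(L,R)$ to the strong-component decomposition $S_1,\dots,S_k$ of $D\langle V_2\rangle$. I claim every component $S_i$ with $\{x,y\}\not\subseteq V(S_i)$ lies entirely in $L$ or entirely in $R$: since $H\langle V(S_i)\rangle$ contains the strong digraph $S_i$ it is strong, so if it met both sides there would be an arc from $V(S_i)\cap L$ to $V(S_i)\cap R$, and this arc would differ from $xy$, contradicting uniqueness. Because every original arc between distinct components runs forward (from lower to higher index), while the only way for a vertex of $V_2$ to leave its component is via a splitting arc, the unique-crossing condition now severely restricts how the $S_i$ can be distributed between $L$ and $R$. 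The decisive point is the pair of arc-disjoint $(X,Y)$-paths: if both $V(S_1)\subseteq R$ and $V(S_k)\subseteq L$ held, then $Y\subseteq R$ and $X\subseteq L$, so each of $Q_1,Q_2$ would run from $L$ to $R$ and would therefore have to traverse the unique crossing arc $xy$ — impossible for two arc-disjoint paths. Combining this with the degree bound (which forbids $|L|=1$ or $|R|=1$, since $x$ needs a second out-neighbour inside $L$ and $y$ a second in-neighbour inside $R$) forces the component $S_c$ that is split by $(L,R)$ to be either the terminal component $S_k$ or the initial component $S_1$; in particular $xy\in A(S_k)$ or $xy\in A(S_1)$, with all of $S_1,\dots,S_{c-1}$ on the $R$-side and all of $S_{c+1},\dots,S_k$ on the $L$-side.

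It remains to pin down the size of the split component and the location of the path endpoints; by passing to the reverse digraph (which exchanges $S_1\leftrightarrow S_k$, $X\leftrightarrow Y$ and turns case~(ii) into case~(i)) I may assume $xy\in A(S_k)$, so $L\subseteq V(S_k)$ and $R\supseteq V(S_1)\cup\dots\cup V(S_{k-1})$. Here I would use the defining property of an $(X,Y)$-path that it meets $X$ only at its initial vertex: when $|V(S_k)|=3$ we have $X=V(S_k)$, so each path must leave $S_k$ immediately through a splitting arc out of its start vertex, and that arc lands in an earlier component, i.e.\ in $R$; hence if a path started at a vertex of $L$ it would create a second $L\to R$ arc, so both paths must start at the unique vertex of $R\cap V(S_k)$, which (by the degree bound forcing $|L|=2$) is exactly $y$. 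The genuinely delicate part — and the main obstacle — is to show that no cut-arc can live inside a component of size at least $4$. For this I would invoke Theorem~\ref{thm-nicedecom} and Proposition~\ref{prop-nicedecom}: with $X=U_l$ the terminal block of the nice decomposition of $S_k$, the nested ``staircase'' structure of the backward arcs, together with the exit-vertex analysis above (each path must leave $S_k$ from a vertex of $R\cap V(S_k)$) and the degree bound, should be shown to force at least two distinct $L\to R$ arcs, contradicting that $xy$ is the unique crossing. This case analysis is where essentially all the work lies.

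Finally, for the ``moreover'' statement I would argue that the cut-arcs are now so rigid that at most one of type~(i) and one of type~(ii) can occur, and that each can be removed by rerouting. The key is again $W_{\mathcal{Q}}^+=W_{\mathcal{Q}}^-=\emptyset$: since $x$ has out-degree at least $2$ in $H$ but its only original out-arc in the terminal $3$-cycle is $xy$, the vertex $x$ must be the tail of a splitting arc, hence $x$ has an out-neighbour in $V_1$; using that every vertex of $V_1$ has out- and in-degree at least $3$ and that $D$ is $2$-arc-strong, I would reroute one of the two $(X,Y)$-paths so that it starts at $x$ (respectively, in case~(ii), ends at $x$) and leaves $S_k$ into an earlier component, thereby introducing a second $L\to R$ arc and destroying the cut while keeping the family arc-disjoint and $\gamma$-feasible. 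Verifying that such a reroute always exists without re-creating a cut elsewhere is the second main technical point.
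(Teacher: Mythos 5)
Your proposal is a plausible high-level outline, but it has two genuine gaps, and you flag both of them yourself: the exclusion of cut-arcs inside a strong component of order at least $4$ (``this case analysis is where essentially all the work lies''), and the construction of the modified family $\mathcal{Q}'$ in the ``moreover'' part (``the second main technical point''). These are not routine verifications; they are the substance of the lemma, so as written the proof is incomplete.

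For the first gap, the paper does not count $L\to R$ crossing arcs at all. Instead it fixes an $(X,Y)$-path $P$ in $D_{\mathcal{Q}}\langle V_2\rangle-xy$ with first vertex $a$ and argues via reachability: if $xy$ is a cut-arc then w.l.o.g.\ $x$ cannot reach $a$ in $D_{\mathcal{Q}}\langle V_2\rangle-xy$, and in each case ($x\notin V(S_k)$, $|V(S_k)|=2$, $|V(S_k)|\geq 4$) this forces $xy$ to be the \emph{only} out-arc of $x$ in $D_{\mathcal{Q}}\langle V_2\rangle$, i.e.\ $x\in W_{\mathcal{Q}}^+$, contradicting the hypothesis. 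In the case $|V(S_k)|\geq 4$ the identification $U_l=\{x_1\}=\{a\}$, $U_{l-1}=\{x\}$, $xy=x_2y_2$ comes from Proposition \ref{prop-nicedecom}(ii) together with the fact that $x_1y_1$ is the unique arc leaving $U_l$ and every vertex of $U_1\cup\cdots\cup U_{l-1}$ except $y_1$ dominates $U_l$. Your crossing-arc strategy would have to reproduce exactly this analysis, and it must also account for splitting arcs with tail $x$ that stay inside $L$ (which keep $d^+_H(x)\geq 2$ without creating a second crossing); your sketch does not address this.

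For the second gap, the paper's reroute is different from yours and depends on structure you have not established. In case (i) the proof of the lemma shows (using $x,z\notin W_{\mathcal{Q}}^+$ and the fact that neither $x$ nor $z$ can reach $a$ in $D_{\mathcal{Q}}\langle V_2\rangle-xy$) that $\mathcal{Q}$ must contain a short path $Q_j=zt'x$ with $j\geq 3$; the modification is then $\mathcal{Q}'=(\mathcal{Q}-zt'x)\cup\{zt'w\}$ for an out-neighbour $w$ of $t'$ in $V_2-\{x,z\}$, which preserves $\gamma$-feasibility, keeps $W_{\mathcal{Q}'}^+=W_{\mathcal{Q}'}^-=\emptyset$, and destroys all cut-arcs inside $S_k$; statements (i) and (ii) then certify $2$-arc-strength. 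Your proposed reroute (making one of $Q_1,Q_2$ start at $x$) would change the start vertex of an $(X,Y)$-path and would need a separate verification that the resulting family is still arc-disjoint, still consists of two $(X,Y)$-paths plus length-two paths, and does not reintroduce a vertex of out- or in-degree one; none of this is supplied.
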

\begin{proof}
	By Lemma \ref{lem-DPstrong}, we may assume that $D_{\mathcal{Q}}\left\langle  V_{2}\right\rangle$ is 1-arc-strong with a cut-arc $xy$. First we claim that $xy$ is an original arc of $D\left\langle  V_{2}\right\rangle$. Suppose not, say $xy$ is a splitting arc of $D_{\mathcal{Q}}\left\langle  V_{2}\right\rangle$ and it is obtained by splitting off the path $Q_i\in\mathcal{Q}$. By the fact that $\mathcal{Q}-Q_i$ has an $(X,Y)$-path and Lemma \ref{lem-DPstrong} again, the directed multigraph $D_{\mathcal{Q}-Q_i}\left\langle  V_{2}\right\rangle$ obtained from $D\left\langle  V_{2}\right\rangle$ by splitting off all paths in $\mathcal{Q}-Q_i$ is also strong. This contradicts the fact that $xy$ is a cut-arc of $D_{\mathcal{Q}}\left\langle  V_{2}\right\rangle$ as $A(D_{\mathcal{Q}-Q_i}\left\langle  V_{2}\right\rangle)\subseteq A(D_{\mathcal{Q}}\left\langle  V_{2}\right\rangle-xy)$.
	
	\yw{Note that at most one of the paths $Q_1, Q_2$ contains the arc $xy$ so suppose w.l.o.g that $Q_1$ avoids $xy$.} Let $P$ be an arbitrary $(X,Y)$-path in $D_{\mathcal{Q}}\left\langle  V_{2}\right\rangle-xy$. Such \yw{a} path exists as it can be obtained by splitting off the  $(X,Y)$-path $Q_1$ in $D$. Suppose that $P$ starts from $a$ and ends at $b$. Clearly $a\in X\subseteq V(S_k)$ and $b\in Y\subseteq V(S_1)$.
	
	As $xy$ is a cut-arc of $D_{\mathcal{Q}}\left\langle  V_{2}\right\rangle$, there is no $(x,a)$-path or no $(b,y)$-path in $D_{\mathcal{Q}}\left\langle  V_{2}\right\rangle-xy$, consequently, \yw{the same holds} in $D\left\langle  V_{2}\right\rangle-xy$. W.l.o.g we may assume that  $D\left\langle  V_{2}\right\rangle-xy$ contains no $(x,a)$-path. \yw{Suppose first that} $x\notin V(S_k)$. Then $x$ dominates $a$. So we have $V(S_k)=\{a\}=\{y\}$ and $V(S_{k-1})=\{x\}$. Since there is no $(x,a)$-path in $D_{\mathcal{Q}}\left\langle  V_{2}\right\rangle-xy$, $xy$ is also the only out-arc of $x$ in $D_{\mathcal{Q}}\left\langle  V_{2}\right\rangle$, which implies that $x\in W_{\mathcal{Q}}^+$ and this contradicts our assumption. Therefore, we may assume that $x\in V(S_k)$. Then $y\in V(S_k)$ as $xy\in A(D\left\langle  V_{2}\right\rangle)$. Moreover, there is no $(x,a)$-path in $S_k-xy$, which means that $xy$ is also a cut-arc of $S_k$.

	If $|V(S_k)|\geq 4$, then $a\in U_l$ and $xy=x_iy_i$ for some $i\in[r]$. Since there is no $(x_i,a)$-path in $S_k-x_iy_i$, we have $x_i\neq x_1$ and $x_i$ does not dominate $a$. Note that there is only one arc, i.e., $x_1y_1$, leaving the set $U_l$ in $D\left\langle  V_{2}\right\rangle$ \yw{and every vertex in $U_1\cup\cdots\cup U_{l-1}$ except $y_1$ dominates all vertices of $U_l$}. So $a=x_1, x=x_i=y_1$ and $x_iy_i$ is the only out-arc of $x$ in $D\left\langle  V_{2}\right\rangle$. This means that $U_l=\{x_1\}=\{a\}$,  $U_{l-1}=\{x\}$ and $xy=x_2y_2$ by Proposition \ref{prop-nicedecom} (ii). Moreover, since $x$ can not reach $a$ in $D_{\mathcal{Q}}\left\langle  V_{2}\right\rangle-xy$, the arc $xy$ is also the only out-arc of $x$ in  $D_{\mathcal{Q}}\left\langle  V_{2}\right\rangle$. Then $x\in W_{\mathcal{Q}}^+$, which contradicts our assumption. 
	
	For the case $|V(S_k)|=2$, we have $S_k=xyx$ and $y=a$. Moreover, the arc $xy$ is also the only out-arc of $x$ in  $D_{\mathcal{Q}}\left\langle  V_{2}\right\rangle$ and then $x\in W_{\mathcal{Q}}^+$, a contradiction again.
	
	So it suffices to consider the case $|V(S_k)|=3$. As $S_k$ is strong \yw{and $xy$ is a cut-arc}, we may assume that $xyzx$ is a 3-cycle in $S_k$. Recall that there is no $(x,a)$-path in $S_k-xy$ and in $D_{\mathcal{Q}}\left\langle  V_{2}\right\rangle-xy$. Clearly $x\neq a$. Further, $x$ can not reach any vertex $u$ of $V_2-V(S_k)$ in $D_{\mathcal{Q}}\left\langle  V_{2}\right\rangle-xy$ as $u$ dominates $a$.  Since $x\notin  W_{\mathcal{Q}}^+$, either $xz\in A(S_k)$ or there exists a path $Q_i\in\mathcal{Q}$ with $i\geq 3$ such that $Q_i=xtz$. In both cases, $x$ can reach $z$ in $D_{\mathcal{Q}}\left\langle  V_{2}\right\rangle-xy$, which implies that $z\neq a$ and then $y=a$. In the same way, $z$ can not reach $a$ and \yw{can not reach any} vertex of $V_2-V(S_k)$ in $D_{\mathcal{Q}}\left\langle  V_{2}\right\rangle-xy$. As $z\notin  W_{\mathcal{Q}}^+$,  there exists a path $Q_j\in\mathcal{Q}$ with $i\neq j$ and $i\geq 3$ such that $Q_j=zt^{\prime}x$,  see Figure \ref{fig-lem31} (a). Since $a$ is the first vertex of an arbitrary $(X,Y)$-path in $D_{\mathcal{Q}}\left\langle  V_{2}\right\rangle-xy$ and $xy\in A(S_k)$, we have that both $Q_1$ and $Q_2$ are starting from \yw{$a=y$}, which implies (i). By symmetry, if there is no $(b,y)$-path in $D\left\langle  V_{2}\right\rangle-xy$, then (ii) holds.
		
		Next we show that there exists a $\gamma$-feasible set $\mathcal{Q^{\prime}}$ such that $D_{\mathcal{Q^{\prime}}}\left\langle  V_{2}\right\rangle$ is 2-arc-strong. Since $t^{\prime}$ has out-degree at least 3, it has an out-neighbor $w$ in $V_2-\{x,z\}$. Let $\mathcal{Q^{\prime}}=(\mathcal{Q}-zt^{\prime}x)\cup\{zt^{\prime}w\}$. It can be checked easily that $\mathcal{Q^{\prime}}$ is also a $\gamma$-feasible set and every vertex of $V_2$ has the same out-degree (resp., every vertex of $V_2-x$ has the same in-degree) in $D_{\mathcal{Q^{\prime}}}\left\langle  V_{2}\right\rangle$ and in $D_{\mathcal{Q}}\left\langle  V_{2}\right\rangle$. \yw{Furthermore},  $d_{D_{\mathcal{Q^{\prime}}}\left\langle  V_{2}\right\rangle}^-(x)\geq d_{D\left\langle  V_{2}\right\rangle}^-(x)\geq 2$ as $|V_2|\geq 4$. Thus $W_{\mathcal{Q^{\prime}}}^+=W_{\mathcal{Q^{\prime}}}^-=\emptyset$. Moreover, for each arc $uv\in V(S_k)$, there is a $(u,v)$-path in $D_{\mathcal{Q^{\prime}}}\left\langle  V_{2}\right\rangle-uv$. In other words, \yw{no} arc of $S_k$ is a cut-arc of $D_{\mathcal{Q^{\prime}}}\left\langle  V_{2}\right\rangle$. Similarly, we can adjust $Q^{\prime}$ a little bit to obtain a new $\gamma$-feasible set, still call it $Q^{\prime}$, such that $W_{\mathcal{Q^{\prime}}}^+=W_{\mathcal{Q^{\prime}}}^-=\emptyset$ and \yw{no} arc of $S_1$ and $S_k$ is a cut-arc of $D_{\mathcal{Q^{\prime}}}\left\langle  V_{2}\right\rangle$. Then statements (i) and (ii) show that  $D_{\mathcal{Q^{\prime}}}\left\langle  V_{2}\right\rangle$ is  2-arc-strong and this completes the proof. 
\end{proof}
\begin{figure}[H]
	\centering
	\subfigure{\begin{minipage}[t]{0.33\linewidth}
			\centering\begin{tikzpicture}[scale=0.7]	
			\coordinate [label=center:$V_2$] () at (3,1.8);
			\coordinate [label=center:$V_1$] () at (0,1.8);
			
			\filldraw[black](3,1) circle (3pt)node[label=right:$b$](b){};
			\filldraw[black](3,-1.5) circle (3pt)node[label=right:$y$](y){};
			\filldraw[black](4,-4) circle (3pt)node[label=right:{$x$}](x){};
			\filldraw[black](2,-4) circle (3pt)node[label=left:$z$](z){};			
			
			\filldraw[black](0,-2.5) circle (3pt)node[label=left:$t^{\prime}$](tt){};
			\filldraw[black](0,-1.5) circle (3pt)node[label=left:$v$](v){};
			\filldraw[black](0,-0.5) circle (3pt)node[label=left:$u$](u){};
			\foreach \i/\j/\b in {
				x/y/0,
				y/z/0,
				z/x/10,
				x/z/10,
				z/tt/0,
				tt/y/0,
				y/v/0,
				y/u/0,
				u/b/0,
				v/b/0,
				b/y/0,
				b/z/10
			}{\path[draw, line width=0.8pt] (\i) edge[bend right=\b] (\j);}
		\path[draw, line width=0.8pt] (tt) edge[bend left=10] (x);
			\path[draw, line width=0.8pt] (b) edge[bend left=10] (x);
			\end{tikzpicture}\caption*{(a) $D$}\end{minipage}}
		\subfigure{\begin{minipage}[t]{0.3\linewidth}
				\centering\begin{tikzpicture}[scale=0.7]	
				\filldraw[black](3,1) circle (3pt)node[label=right:$b$](b){};
				\filldraw[black](3,-1.5) circle (3pt)node[label=right:$y$](y){};
				\filldraw[black](4,-4) circle (3pt)node[label=right:{$x$}](x){};
				\filldraw[black](2,-4) circle (3pt)node[label=left:$z$](z){};			
				
				\foreach \i/\j/\b in {
					x/y/0,
					y/z/0,
					z/x/10,
					x/z/10,
					b/y/0,
					b/z/10
				}{\path[draw, line width=0.8pt] (\i) edge[bend right=\b] (\j);}
				\path[draw, line width=0.8pt] (b) edge[bend left=10] (x);
				\path[draw, dashed, line width=0.8pt] (y) edge[bend left=50] (b);
				\path[draw, dashed, line width=0.8pt] (y) edge[bend right=50] (b);
				\path[draw, dashed, line width=0.8pt] (z) edge[bend left=30] (x);
				\end{tikzpicture}\caption*{(b) $D_{\mathcal{Q}}\left\langle  V_2\right\rangle$}\end{minipage}}
			\subfigure{\begin{minipage}[t]{0.3\linewidth}
					\centering\begin{tikzpicture}[scale=0.7]	
				\filldraw[black](3,1) circle (3pt)node[label=right:$b$](b){};
					\filldraw[black](3,-1.5) circle (3pt)node[label=right:$y$](y){};
					\filldraw[black](4,-4) circle (3pt)node[label=right:{$x$}](x){};
					\filldraw[black](2,-4) circle (3pt)node[label=left:$z$](z){};			
					\foreach \i/\j/\b in {
						x/y/0,
						y/z/0,
						z/x/10,
						x/z/10,
						b/y/0,
						b/z/10
					}{\path[draw, line width=0.8pt] (\i) edge[bend right=\b] (\j);}
					\path[draw, line width=0.8pt] (b) edge[bend left=10] (x);
					\path[draw, dashed, line width=0.8pt] (y) edge[bend left=50] (b);
					\path[draw, dashed, line width=0.8pt] (y) edge[bend right=50] (b);
					\path[draw, dashed, line width=0.8pt] (z) edge[bend right=20] (y);
					\end{tikzpicture}\caption*{(c) $D_{\mathcal{Q^{\prime}}}\left\langle  V_2\right\rangle$}\end{minipage}}
	\caption{An illustration of a 2-feasible set $\mathcal{Q}=(Q_1,Q_2,Q_3)$ such that $W_{\mathcal{Q}}^+=\emptyset$, see (a), where $Q_1=yvb, Q_2=yub$ and $Q_3=zt^{\prime}x$. For readability we have only shown the relevant arcs of $D$. The directed multigraph in (b) is $D_{\mathcal{Q}}\left\langle  V_2\right\rangle$. It can be checked easily that the arc $xy$ is a cut-arc of $D_{\mathcal{Q}}\left\langle  V_2\right\rangle$. The set $\mathcal{Q^{\prime}}=(Q_1,Q_2,Q_3^{\prime})$ with $Q_3^{\prime}=zt^{\prime}y$ is also 2-feasible and the directed multigraph $D_{\mathcal{Q^{\prime}}}\left\langle  V_2\right\rangle$ in (c) is 2-arc-strong.}
	\label{fig-lem31}
\end{figure}

 Now observe that there exists a $2$-feasible set $\mathcal{Q}$ in $D$. In fact, every set of two arc-disjoint $(X,Y)$-paths in $D$ clearly is a $2$-feasible set. Such a pair of paths exist as $D$ is 2-arc-strong. We may assume that \yw{for every 2-feasible set $\mathcal{Q}$,} $D_{\mathcal{Q}}\left\langle  V_{2}\right\rangle$ is not 2-arc-strong since otherwise $D$ has a strong arc decomposition by Corollary \ref{coro-DV2star2ArcStr} with $D^{\ast}=D_{\mathcal{Q}}$. By \yw{the last line of the statement of} Lemma \ref{lem-cutarc}, either $W_{\mathcal{Q}}^+\neq \emptyset$ or $W_{\mathcal{Q}}^-\neq \emptyset$ for each 2-feasible set $\mathcal{Q}$.
\begin{equation}
\mbox{We choose a 2-feasible set } \mathcal{Q}=(Q_1,\ldots,Q_{\alpha}) (\alpha\geq 2) \mbox{ such that } |W_{\mathcal{Q}}^+|+|W_{\mathcal{Q}}^-| \mbox{ is minimum. }\tag{$C1$}
\end{equation} 

\medskip

Recall that $D_{\mathcal{Q}}\left\langle  V_{2}\right\rangle$ is obtained from $D\left\langle  V_{2}\right\rangle$ by adding splitting arcs. So $d_{D_{\mathcal{Q}}\left\langle  V_2\right\rangle}^+(x)\geq d_{D\left\langle  V_2\right\rangle}^+(x)$ and then every vertex of $W_{\mathcal{Q}}^+$ has out-degree at most one in $D\left\langle  V_2\right\rangle$. Moreover, we have the following.

\begin{lem}\label{lem-tx}
	Let $x$ be a vertex of $W_{\mathcal{Q}}^+$. Suppose that $x$ has out-degree exactly one in $D\left\langle  V_2\right\rangle$. Then 
	
	(i) $A(\mathcal{Q})$ contains no arc from $x$ to $V_1$;
	
	(ii) each out-neighbor $t$ of $x$ in $V_1$ belongs to two paths in $\mathcal{Q}$;

	 (iii) if $t$ is an out-neighbor of $x$ in $V_1$ and $Q_i=wtz$ with $i\notin[2]$, then % $X\neq\{w\}$ and 
	 $w$ has out-degree at most two in $D_{\mathcal{Q}}\left\langle  V_2\right\rangle$. In particular, $w$ has out-degree at most one in $D\left\langle  V_2\right\rangle$.
\end{lem}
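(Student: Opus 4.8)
The plan is to run everything off two facts. First, since $D_{\mathcal{Q}}\left\langle V_2\right\rangle$ arises from $D\left\langle V_2\right\rangle$ by inserting, for each occurrence of a $V_1$-vertex on a path of $\mathcal{Q}$, exactly one splitting arc joining its two path-neighbours (both in $V_2$, as $V_1$ is independent), every $v\in V_2$ obeys the degree identity
\[
d^{+}_{D_{\mathcal{Q}}\left\langle V_2\right\rangle}(v)=d^{+}_{D\left\langle V_2\right\rangle}(v)+\bigl|\{\,t\in V_1: vt\in A(\mathcal{Q})\,\}\bigr|,
\]
together with its in-degree dual, where $A(\mathcal{Q})$ is the arc set used by $\mathcal{Q}$. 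Second, by the minimality (C1) no $2$-feasible modification of $\mathcal{Q}$ may strictly lower $|W_{\mathcal{Q}}^{+}|+|W_{\mathcal{Q}}^{-}|$. Appending a path only adds splitting arcs, hence only raises degrees and can only shrink $W^{+}$ and $W^{-}$; a reroute that deletes one splitting arc and creates another alters only the out-degrees of the old and new tail and the in-degrees of the old and new head.

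Part (i) is immediate from the identity: as $d^{+}_{D\left\langle V_2\right\rangle}(x)=1$ and $d^{+}_{D_{\mathcal{Q}}\left\langle V_2\right\rangle}(x)=1$ (because $x\in W_{\mathcal{Q}}^{+}$), the correction term vanishes, so no arc from $x$ into $V_1$ lies in $A(\mathcal{Q})$. For part (ii), let $t$ be an out-neighbour of $x$ in $V_1$ and suppose, for contradiction, that $t$ meets at most one path of $\mathcal{Q}$. Since $d^{+}_{D}(t)\ge 3$ and at most one out-arc of $t$ is used, $t$ has an unused out-arc $tz$ with $z\in V_2\setminus\{x\}$; combined with $xt$, which is unused by (i), this produces a length-two path $xtz$. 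Adding it keeps $\mathcal{Q}$ $2$-feasible (now $t$ lies on at most two paths) and inserts only the splitting arc $xz$, so $d^{+}(x)$ climbs from $1$ to $2$, removing $x$ from $W^{+}$ while no vertex enters $W^{+}$ or $W^{-}$. This contradicts (C1), so $t$ meets exactly two paths.

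For part (iii), (ii) tells us $t$ lies on exactly two paths, one of which is $Q_i=wtz$. Assume $d^{+}_{D_{\mathcal{Q}}\left\langle V_2\right\rangle}(w)\ge 3$ and first treat the generic case $z\neq x$. Rerouting $Q_i$ to $Q_i'=xtz$, legal because $xt$ is unused by (i), replaces the splitting arc $wz$ by $xz$: this lifts $d^{+}(x)$ from $1$ to $2$, drops $d^{+}(w)$ to at least $2$, and leaves $d^{-}(z)$ and every other degree untouched, so $|W^{+}|+|W^{-}|$ strictly decreases, contradicting (C1). Hence $d^{+}_{D_{\mathcal{Q}}\left\langle V_2\right\rangle}(w)\le 2$. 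The ``in particular'' statement then follows from the identity: the arc $wt$ on $Q_i$ gives $wt\in A(\mathcal{Q})$ with $t\in V_1$, so the correction term for $w$ is at least $1$ and $d^{+}_{D\left\langle V_2\right\rangle}(w)\le 2-1=1$.

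I expect the main obstacle to be the degenerate subcase $z=x$ of (iii), where $Q_i=wtx$ and $xt,tx$ form a $2$-cycle so that $xtz=xtx$ is not a path. The natural fix is to reroute $Q_i$ instead to $xtz'$ for a free out-neighbour $z'\neq x$ of $t$ (one exists since $tx$ is already used and $d^{+}_{D}(t)\ge 3$); this still pushes $d^{+}(x)$ up to $2$ and $d^{+}(w)$ down to at least $2$, but it now also deletes the splitting arc $wx$ \emph{entering} $x$. The delicate point is therefore to certify that $x$ does not fall into $W^{-}$ after this reroute, which is the one place where the bookkeeping is not purely local and where I expect to need a separate argument controlling the in-degree of $x$.
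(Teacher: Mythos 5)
Your parts (i) and (ii) and the generic case $z\neq x$ of (iii) are correct and follow essentially the same route as the paper: (i) via the observation that an arc of $A(\mathcal{Q})$ from $x$ into $V_1$ would create a new out-arc of $x$ in $D_{\mathcal{Q}}\left\langle V_2\right\rangle$, (ii) and (iii) via a local reroute that pulls $x$ out of $W_{\mathcal{Q}}^{+}$ and contradicts (C1). The one genuine gap is the subcase $z=x$ of (iii), which you correctly identify but explicitly leave open: you say you ``expect to need a separate argument controlling the in-degree of $x$'' after deleting the splitting arc $wx$, and you do not supply it. As it stands, that subcase of (iii) is unproved, and it does occur (nothing prevents $Q_i=wtx$).

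The missing argument is short and is exactly what the paper uses. Since $x$ has out-degree exactly one in the semicomplete digraph $D\left\langle V_2\right\rangle$ and $|V_2|\geq 4$, every vertex of $V_2$ other than $x$ and its unique out-neighbour must dominate $x$, so $d^{-}_{D\left\langle V_2\right\rangle}(x)\geq |V_2|-2\geq 2$. After replacing $Q_i=wtx$ by $xtz'$ (with $z'\neq x$ a free out-neighbour of $t$, which exists since $t$ lies on only one path of $\mathcal{Q}-Q_i$ and $d^{+}_D(t)\geq 3$), your degree identity gives $d^{-}_{D_{\mathcal{Q'}}\left\langle V_2\right\rangle}(x)\geq d^{-}_{D\left\langle V_2\right\rangle}(x)\geq 2$, so $x$ does not enter $W_{\mathcal{Q'}}^{-}$; the head $z'$ only gains an in-arc, $w$ keeps out-degree at least two, and $x$ leaves $W_{\mathcal{Q'}}^{+}$, so $|W_{\mathcal{Q'}}^{+}|+|W_{\mathcal{Q'}}^{-}|<|W_{\mathcal{Q}}^{+}|+|W_{\mathcal{Q}}^{-}|$, contradicting (C1). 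With this paragraph inserted, your proof is complete and coincides with the paper's.
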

\begin{proof}
	First we claim that $A(\mathcal{Q})$ contains no arcs from $x$ to $V_1$. If not, say $xty$ is a subpath of some path $Q_j\in\mathcal{Q}$. Then $xy$ is a new out-arc of $x$ in  $D_{\mathcal{Q}}\left\langle  V_2\right\rangle$ and thus $d_{D_{\mathcal{Q}}\left\langle  V_2\right\rangle}^+(x)>d_{D\left\langle  V_2\right\rangle}^+(x)=1$. This contradicts the definition of set $W_{\mathcal{Q}}^+$ and so the claim holds. 
	
	Let $t$ be any out-neighbor of $x$ in $V_1$. Clearly $xt\notin A(\mathcal{Q})$ by (i). As $\mathcal{Q}$ is $2$-feasible, $t$ belongs to at most two paths in $\mathcal{Q}$. Suppose to the contrary that $t$ belongs to at most one path in $\mathcal{Q}$. Since $t$ has out-degree at least three, there is an out-arc $ty$ of $t$ in $D-x$ with $ty\notin A(\mathcal{Q})$. Then $\mathcal{Q}\cup\{xty\}$ is a $2$-feasible set $\mathcal{Q}^{\prime}$. Further, $x\notin W_{\mathcal{Q^{\prime}}}^+$ as $x$ has one more out-arc, i.e.,  $xy$, in $D_{\mathcal{Q^{\prime}}}\left\langle  V_2\right\rangle$, which contradicts (C1). This proves (ii).
	
	Suppose that $Q_i=wtz$ with $i\notin [2]$ such that $t$ is an out-neighbor of $x$ in $V_1$ and $w$ has out-degree at least three in $D_{\mathcal{Q}}\left\langle  V_2\right\rangle$. By (ii), $t$ belongs to only one path in $\mathcal{Q}-Q_i$. Since $t$ has out-degree at least three, there exists an out-neighbor $t^+$ of $t$ such that $t^+\neq x$ and $tt^+\notin A(\mathcal{Q}-Q_i)$. In particular, let $t^+=z$ when $z\neq x$. Let $\mathcal{Q^{\prime}}=(\mathcal{Q}-wtz)\cup \{xtt^+\}$.  Then  $\mathcal{Q^{\prime}}$ is $2$-feasible and both $w$ and $x$ have out-degree at least two in $D_{\mathcal{Q^{\prime}}}\left\langle  V_2\right\rangle$. It should be mentioned that when $x=z$, although $x$ is loosing one in-neighbor, $x\notin W_{\mathcal{Q}}^-$ as $d_{D\left\langle  V_2\right\rangle}^-(x)\geq |V_2|-1\geq 3$. This contradicts (C1) again as $|W_{\mathcal{Q^{\prime}}}^+|<|W_{\mathcal{Q}}^+|$ and $|W_{\mathcal{Q^{\prime}}}^-|=|W_{\mathcal{Q}}^-|$. So $w$ has at most two out-arcs in $D_{\mathcal{Q}}\left\langle  V_2\right\rangle$ and then it has out-degree at most one in $D\left\langle  V_2\right\rangle$ as $wz$ is a splitting arc of $D_{\mathcal{Q}}\left\langle  V_2\right\rangle$. 
\end{proof}

Next we show that the minimum out-degree of $D_{\mathcal{Q}}\left\langle  V_2\right\rangle$ is at least two except for one case in which we have complete knowledge of the structure of the terminal component of $D\left\langle  V_2\right\rangle$.

\begin{lem}\label{lem-Skcycle}
	If $W_{\mathcal{Q}}^+$ is not empty for each 2-feasible set $\mathcal{Q}$, then $S_k$ is a 3-cycle and each vertex of $S_k$ has exactly one out-neighbor in $V_1$. Furthermore, they all have the same out-neighbor in $V_1$.
\end{lem}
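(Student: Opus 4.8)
The plan is to start from a vertex $x\in W^+_{\mathcal{Q}}$, which exists by hypothesis, and to pin down its location in the acyclic ordering $S_1,\dots,S_k$. Since $D_{\mathcal{Q}}\langle V_2\rangle$ arises from $D\langle V_2\rangle$ by only adding splitting arcs, $x$ has out-degree at most one in $D\langle V_2\rangle$. First I would rule out out-degree $0$: in that case $x$ is dominated by all of $V_2$, so $S_k=\{x\}=X$, and then both $(X,Y)$-paths $Q_1,Q_2$ begin at $x$ and (being arc-disjoint) leave $x$ along two distinct arcs into $V_1$; splitting these off gives $x$ out-degree at least two in $D_{\mathcal{Q}}\langle V_2\rangle$, contradicting $x\in W^+_{\mathcal{Q}}$. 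Hence $x$ has a unique out-neighbour $y$ in $V_2$, and since $D$ is $2$-arc-strong $x$ has out-degree at least $2$ in $D$, so $x$ has at least one out-neighbour $t\in V_1$. By Lemma \ref{lem-tx}, no arc of $\mathcal{Q}$ leaves $x$ into $V_1$, and every such $t$ lies on exactly two paths of $\mathcal{Q}$.

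The heart of the argument is to show that the two predecessors of such a $t$ on its two paths are themselves out-degree-one vertices of $V_2$ that also dominate $t$. Let $w_a,w_b\in V_2$ be the in-neighbours of $t$ on its two paths; they are distinct (the paths are arc-disjoint) and, by Lemma \ref{lem-tx}(i), both differ from $x$. When a path through $t$ is a short path $Q_i=wtz$ with $i\ge 3$, Lemma \ref{lem-tx}(iii) gives $d^+_{D\langle V_2\rangle}(w)\le 1$; I would then exclude $d^+_{D\langle V_2\rangle}(w)=0$ exactly as for $x$, so $d^+_{D\langle V_2\rangle}(w)=1$ and $w$ dominates $t$. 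The remaining possibility, that $t$ lies on one of the two long $(X,Y)$-paths, I expect to eliminate by rerouting: replacing the offending long path by one that bypasses $t$ through the original semicomplete arc between the predecessor and successor of $t$, thereby freeing $t$ so that a short path $xt\cdots$ can be inserted to supply $x$ with a splitting out-arc, lowering $|W^+_{\mathcal{Q}}|$ and contradicting (C1). This reduces everything to three pairwise distinct vertices $x,w_a,w_b$, each of out-degree exactly one in $D\langle V_2\rangle$ and each dominating $t$.

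With three such vertices in hand I would invoke the structural observations. By Observation \ref{obse-smallvertex}, an out-degree-one vertex lies in $S_{k-1}$ only if $S_k$ is a single vertex; but then the other two out-degree-one vertices would have to lie in the singleton $S_k$, which is impossible, so $x,w_a,w_b$ all lie in $S_k$ with their unique $V_2$-out-arc a cut-arc (Observation \ref{obse-smallvertex}(ii)). Now a strong semicomplete digraph on at least four vertices has at most two out-degree-one vertices by Observation \ref{obse-smallvtx-in-gd} (they can only be $x_1$ and $y_1$ of its nice decomposition), and a $2$-cycle has only two vertices; since $S_k$ contains the three distinct out-degree-one vertices $x,w_a,w_b$, it must have exactly three vertices, all of out-degree one, that is, $S_k$ is a directed $3$-cycle and $\{x,w_a,w_b\}=V(S_k)$. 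All three dominate the same $t\in V_1$ by construction. For the ``exactly one out-neighbour in $V_1$'' claim I would argue contrapositively: if some vertex of the $3$-cycle had a second out-neighbour $t'\in V_1$, then the three cycle vertices would have at least two distinct escapes into $V_1$, and the feasibility cap of two paths per $V_1$-vertex would allow three short paths (one from each cycle vertex) to be routed simultaneously, giving every vertex of $S_k$ a splitting out-arc and producing a $2$-feasible set with empty $W^+$, contradicting the hypothesis.

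I expect the main obstacle to be the rerouting bookkeeping. The naive way to give $x$ a new out-arc frees a predecessor $w$ that, by Lemma \ref{lem-tx}(iii), immediately re-enters $W^+$, so $|W^+_{\mathcal{Q}}|$ does not drop; converting this stalemate into the rigid picture ``$3$-cycle with a common $V_1$-neighbour'' is the delicate point. One must use the feasibility cap of two paths per $V_1$-vertex to bound how many out-degree-one vertices can share a given out-neighbour, combine this with the nice-decomposition bound on the number of out-degree-one vertices of $S_k$, and, in the long-path case, make sure each long $(X,Y)$-path can actually be rerouted off $t$ using the semicomplete arcs between consecutive $V_2$-vertices on it (arguing directly when only the reverse arc is present). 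It is precisely keeping all these moves compatible with $\mathcal{Q}$ being $2$-feasible that carries the weight of the proof.
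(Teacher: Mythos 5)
Your opening moves match the paper's: ruling out $d^+_{D\langle V_2\rangle}(x)=0$, getting the unique out-arc $xy$, and extracting an out-neighbour $t\in V_1$ that must lie on two paths of $\mathcal{Q}$ are all correct and are exactly how the paper begins. The gap is in the step you yourself flag as the heart of the argument. You plan to reduce to the situation where both paths of $\mathcal{Q}$ through $t$ are short paths $Q_i=wtz$ with $i\ge 3$ (so that Lemma \ref{lem-tx}(iii) hands you two further out-degree-one vertices $w_a,w_b$), by ``eliminating'' the case where $t$ lies on $Q_1$ or $Q_2$ via a reroute that bypasses $t$. This reduction is exactly backwards. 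The paper shows (in its case $|V(S_k)|=3$) that $t$ \emph{cannot} lie only on short paths: if it did, $x,w_1,w_2$ would exhaust $V(S_k)$ and both $Q_1,Q_2$ would be forced to start at $x$ with an arc into $V_1$, contradicting Lemma \ref{lem-tx}(i). In the genuine extremal configurations (see Figure \ref{fig-proof}) $t$ is the second vertex of \emph{both} $Q_1$ and $Q_2$; this situation is realizable and already satisfies (C1) with $|W_{\mathcal{Q}}^+|=1$, so no rerouting can yield a contradiction there. Concretely, your bypass arc from the predecessor $t^-$ to the successor $t^+$ of $t$ on a long path typically does not exist: $t^-$ lies in the terminal component $S_k$ while $t^+$ generally does not, and every arc of $D\langle V_2\rangle$ between $S_k$ and $V_2\setminus V(S_k)$ points \emph{into} $S_k$. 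When a reroute is possible it merely transfers the out-degree deficiency to the vertex that loses its splitting arc --- the stalemate you already noticed --- so $|W_{\mathcal{Q}}^+|+|W_{\mathcal{Q}}^-|$ does not drop.

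What is actually needed in the long-path case is different: one shows $t\in V(Q_1)\cap V(Q_2)$ and that $t$ is the second vertex of both (otherwise inserting $x$ \emph{before} $t$ on one path, rather than bypassing $t$, contradicts (C1)); this forces $Q_1,Q_2$ to start at two distinct vertices $a_1,a_2$ with $V(S_k)=\{x,a_1,a_2\}$, all three dominating $t$. The conclusion that every vertex of $S_k$ has out-degree one (hence that $S_k$ is a $3$-cycle) then comes from a symmetry argument over all $2$-feasible sets: for any $u,v\in V(S_k)$ there is a $2$-feasible set whose $(X,Y)$-paths begin with $ut$ and $vt$, and the hypothesis $W_{\mathcal{F}}^+\neq\emptyset$ pushes the third vertex into $W_{\mathcal{F}}^+$. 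Your closing counting steps (three out-degree-one vertices force $|V(S_k)|=3$ via Observations \ref{obse-smallvertex} and \ref{obse-smallvtx-in-gd}; a second $V_1$-out-neighbour would allow a $2$-feasible set with empty $W^+$) are sound, but they rest on the unproved --- and in fact false --- reduction, so the proposal as it stands does not go through.
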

\begin{proof}
	Fix an arbitrary vertex $x\in W_{\mathcal{Q}}^+$.  If $|X|=1$, then the vertex of $X$ clearly has out-degree at least two in $D_{\mathcal{Q}}\left\langle  V_{2} \right\rangle$ as $Q_1,Q_2$ are two arc-disjoint paths starting from $X$. So we may assume that $X\neq\{x\}$. Then $x$ has out-degree at least one in $D\left\langle  V_2\right\rangle$ since otherwise we would have $\{x\}=V(S_k)=X$. On the other hand, since every vertex of $W_{\mathcal{Q}}^+$ has out-degree at most one in $D\left\langle  V_2\right\rangle$, we obtain that $x$ has only one out-arc, say $xy$, in $D\left\langle  V_2\right\rangle$. 
	
	Fix an arbitrary out-neighbor $t$ of $x$ in $V_1$. Such a neighbor exists as $D$ is 2-arc-strong. Moreover, $xt\notin A(\mathcal{Q})$ as $A(\mathcal{Q})$ contains no arc from $x$ to $V_1$ by Lemma \ref{lem-tx} (i).  Next we divide the proof into the following three cases.

	\begin{case}
		$|V(S_k)|\leq 2$.
	\end{case}

	In this case, we have $X=V(S_k)$ and either $V(S_{k-1})=\{x\}, V(S_k)=\{y\}$ or $S_k=xyx$. In both cases it follows from the definition of $Q_1$ and $Q_2$ that arc $xy$ is not in $Q_1\cup Q_2$. Since $A(\mathcal{Q})$ contains no arc from $x$ to $V_1$,  the paths $Q_1, Q_2$ are two arc-disjoint $(\{y\},Y)$-paths.

	Next we show that $t\in V(Q_1)\cap V(Q_2)$. Suppose that $t$ belongs to a path $Q_i\in\mathcal{Q}$ with $i\notin [2]$, say $Q_i=wtz$. By Lemma \ref{lem-tx} (iii), $w$ has out-degree at most one in $D\left\langle  V_2\right\rangle$. Observe that $x,y$ are the only two vertices of $V_2$ with out-degree at most one in $D\left\langle  V_2\right\rangle$ as every other vertex dominates both $x$ and $y$. Since $xt\notin A(\mathcal{Q})$, we have $w=y$. Then $Q_1,Q_2$ and $Q_i$ are all starting from $y$, which implies that $y$ has out-degree at least three in $D_{\mathcal{Q}}\left\langle  V_2\right\rangle$. This contradicts Lemma \ref{lem-tx} (iii) and now it follows by Lemma \ref{lem-tx} (ii) that $t\in V(Q_1)\cap V(Q_2)$.

		 Since $Q_1$ and $Q_2$ are arc-disjoint $(\{y\},Y)$-paths, we may assume that $t$ is not the second vertex of $Q_1$. Let $t^-$ be the predecessor of $t$ on the path $Q_1$.  Replacing the arc $t^-t$  with $t^-xt$ in  $\mathcal{Q}$ (see Figure \ref{fig-WQ1} (b)), we obtain a new $2$-feasible set $\mathcal{Q^{\prime}}$ containing $xt$. This means that the vertex $x$ gains one more out-neighbor and thus $x\notin W_{\mathcal{Q^{\prime}}}^+$. Moreover, although $t^-$ is loosing  one out-neighbor after the replacement, $t^-\notin W_{\mathcal{Q^{\prime}}}^+$ as it has at least two out-neighbors $x,y$ in $V_2$, consequently, in $D_{\mathcal{Q^{\prime}}}\left\langle  V_2\right\rangle$. Therefore, $|W_{\mathcal{Q^{\prime}}}^+|<|W_{\mathcal{Q}}^+|=|\{x\}|$ and $|W_{\mathcal{Q^{\prime}}}^-|=|W_{\mathcal{Q}}^-|$, which contradicts (C1).  It should be noted that $Q_1$ and $Q_2$ do not contain any arc from $V_1$ to $x$, so they still are two paths after replacing the arc $t^-t$  with $t^-xt$.
	\begin{figure}[H]
		\centering
	\subfigure{\begin{minipage}[t]{0.45\linewidth}
			\centering\begin{tikzpicture}[scale=0.8]	
			\coordinate [label=center:$V_2$] () at (2,1.8);
			\coordinate [label=center:$V_1$] () at (-1,1.8);
			\draw[line width=1.5pt] (3,0) -- (3,-2); 
			
			\filldraw[black](2,1) circle (3pt)node[](v1){};
			\filldraw[black](2,0) circle (3pt)node[](v2){};
			\filldraw[black](2,-1) circle (3pt)node[label=right:$t^-$](tt){};
			\filldraw[black](2,-2) circle (3pt)node[label=right:$x$](x){};
			\filldraw[black](2,-3) circle (3pt)node[label=right:$y$](y){};
			
			\filldraw[black](-1,0.2) circle (3pt)node[label=left:$u$](u){};
			\filldraw[black](-1,-1) circle (3pt)node[label=left:$t$](t){};
			\filldraw[black](-1,-2.2) circle (3pt)node[label=left:$w$](w){};
			
			\foreach \i/\j/\w/\b in {
				v1/v2/0.8/0,
				v2/tt/0.8/0,
				tt/x/0.8/0,
				x/y/0.8/0,
				y/w/1.5/0,
				w/tt/1.5/0,
				tt/t/1.5/0,
				t/v1/1.5/0,
				x/t/0.8/0,
				t/x/0.8/15}{\path[draw, line width=\w pt] (\i) edge[bend right=\b] (\j);}
			\path[draw, dashed, line width=0.8 pt] (y) edge[bend left=15] (t);
			\foreach \i/\j in {
				t/v2,
				v2/u,
				u/v1}{\path[draw, dashed, line width=0.8 pt] (\i) edge[] (\j);}		
			\end{tikzpicture}\caption*{(a)}\end{minipage}}
	\subfigure{\begin{minipage}[t]{0.45\linewidth}
			\centering\begin{tikzpicture}[scale=0.8]	
			\coordinate [label=center:$V_2$] () at (2,1.8);
			\coordinate [label=center:$V_1$] () at (-1,1.8);
			\draw[line width=1.5pt] (3,0) -- (3,-2); 
			
			\filldraw[black](2,1) circle (3pt)node[](v1){};
			\filldraw[black](2,0) circle (3pt)node[](v2){};
			\filldraw[black](2,-1) circle (3pt)node[label=right:$t^-$](tt){};
			\filldraw[black](2,-2) circle (3pt)node[label=right:$x$](x){};
			\filldraw[black](2,-3) circle (3pt)node[label=right:$y$](y){};
			
			\filldraw[black](-1,0.2) circle (3pt)node[label=left:$u$](u){};
			\filldraw[black](-1,-1) circle (3pt)node[label=left:$t$](t){};
			\filldraw[black](-1,-2.2) circle (3pt)node[label=left:$w$](w){};
			
			\foreach \i/\j/\w/\b in {
				v1/v2/0.8/0,
				v2/tt/0.8/0,
				tt/x/1.5/0,
				x/y/0.8/0,
				y/w/1.5/0,
				w/tt/1.5/0,
				tt/t/0.8/0,
				t/v1/1.5/0,
				x/t/1.5/0,
				t/x/0.8/15}{\path[draw, line width=\w pt] (\i) edge[bend right=\b] (\j);}
			\path[draw, dashed, line width=0.8 pt] (y) edge[bend left=15] (t);
			\foreach \i/\j in {
				t/v2,
				v2/u,
				u/v1}{\path[draw, dashed, line width=0.8 pt] (\i) edge[] (\j);}	
			\end{tikzpicture}\caption*{(b)}\end{minipage}}
		\caption{An illustration of Case 1 in Lemma \ref{lem-Skcycle}. All arcs in $D\left\langle  V_2\right\rangle$ not shown go from top to bottom. For readability we have only shown the relevant arcs of $D$. The dashed and bold paths shown in (a)  and in (b) form a $2$-feasible set $\mathcal{Q}$ and $\mathcal{Q^{\prime}}$ of $D$, respectively.}
		\label{fig-WQ1}
	\end{figure}
	
	This concludes Case 1 and therefore, we may assume that $|V(S_k)|\geq 3$. Recall that $xy$ is the only out-arc of $x$ in $D\left\langle  V_2\right\rangle$. By Observation  \ref{obse-smallvertex}, we have $x,y\in V(S_k)$ and $xy$ is a cut-arc of $S_k$. 
	
	\begin{case}
		$|V(S_k)|\geq 4$.
	\end{case}

Recall that by Theorem \ref{thm-nicedecom}, $V(S_k)$ has a nice decomposition $(U_1,\ldots, U_l)$ with cut-arcs $\{x_iy_i:i\in[r]\}$.  It follows by Observation \ref{obse-smallvtx-in-gd} and the fact that $U_l=X\neq \{x\}$ that $xy=x_2y_2, U_l=\{x_1\}$ and  $U_{l-1}=\{y_1\}=\{x_2\}$. Hence $Q_1$ and $Q_2$ are two arc-disjoint $(\{x_1\},Y)$-paths. Moreover, by the same argument as in the second paragraph of Case 1 (replace $y$ by $x_1$), we have that the vertex $t$, which we fixed above, belongs to both $Q_1$ and $Q_2$. 
	\begin{figure}[H]
			\centering
		\subfigure{\begin{minipage}[t]{0.45\linewidth}
				\centering\begin{tikzpicture}[scale=0.8]	
				\coordinate [label=center:$V_2$] () at (2,2);
				\coordinate [label=center:$V_1$] () at (-1,2);
				\draw[line width=1.5pt] (3,0) -- (3,-2); 
				
				\filldraw[black](2,1) circle (3pt)node[label=right:$b$](b){};
				\filldraw[black](2,0) circle (3pt)node[label=right:$v$](v){};
				\filldraw[black](2,-1) circle (3pt)node[label=right:{$y$}](y){};
				\filldraw[black](2,-2) circle (3pt)node[label=right:{$x$}](x){};
				\filldraw[black](2,-3) circle (3pt)node[label=right:$x_1$](x1){};
				
				\filldraw[black](-1,0.5) circle (3pt)node[label=left:$a$](a){};
				\filldraw[black](-1,-1) circle (3pt)node[label=left:$t$](t){};

				\foreach \i/\j/\w/\b in {
					x1/x/1.5/0,
					x/y/1.5/0,
					y/a/1.5/0,
					a/v/1.5/10,
					v/t/1.5/0,
					t/b/1.5/10,
					t/x/0.8/10,
					x/t/0.8/10}{\path[draw, line width=\w pt] (\i) edge[bend left=\b] (\j);}
					\foreach \i/\j in {
					t/y,
					y/v,
				v/b}{\path[draw, dashed, line width=0.8 pt] (\i) edge[] (\j);}	
			\path[draw, dashed, line width=0.8 pt] (x1) edge[bend left=20] (t);	
				\end{tikzpicture}\caption*{(a)}\end{minipage}}
			\subfigure{\begin{minipage}[t]{0.45\linewidth}
					\centering\begin{tikzpicture}[scale=0.8]	
					\coordinate [label=center:$V_2$] () at (2,2);
					\coordinate [label=center:$V_1$] () at (-1,2);
					\draw[line width=1.5pt] (3,0) -- (3,-2); 
					
					\filldraw[black](2,1) circle (3pt)node[label=right:$b$](b){};
					\filldraw[black](2,0) circle (3pt)node[label=right:$v$](v){};
					\filldraw[black](2,-1) circle (3pt)node[label=right:{$y$}](y){};
					\filldraw[black](2,-2) circle (3pt)node[label=right:{$x$}](x){};
					\filldraw[black](2,-3) circle (3pt)node[label=right:$x_1$](x1){};
					
					\filldraw[black](-1,0.5) circle (3pt)node[label=left:$a$](a){};
					\filldraw[black](-1,-1) circle (3pt)node[label=left:$t$](t){};

					\foreach \i/\j/\w/\b in {
						x1/x/1.5/0,
						x/y/0.8/0,
						y/a/1.5/0,
						a/v/1.5/10,
						v/t/0.8/0,
						t/b/1.5/10,
						t/x/0.8/10,
						x/t/1.5/10}{\path[draw, line width=\w pt] (\i) edge[bend left=\b] (\j);}
					\foreach \i/\j in {
						t/y,
						y/v,
						v/b}{\path[draw, dashed, line width=0.8 pt] (\i) edge[] (\j);}	
					\path[draw, dashed, line width=0.8 pt] (x1) edge[bend left=20] (t);
					\end{tikzpicture}\caption*{(b)}\end{minipage}}
		\caption{An illustration of Case 2 in Lemma \ref{lem-Skcycle} with $x=x_2=y_1,y=y_2=u$. All arcs in $D\left\langle  V_2\right\rangle$ not shown are from top to bottom. For readability we have only shown the relevant arcs of $D$. The dashed and bold paths $Q_1,Q_2$ shown in (a) form a $2$-feasible set $\mathcal{Q}$ of $D$. The two bold paths $Q_2^{\prime}=x_1xtb$ and $Q_3=uav$ in (b) are obtained from $Q_2$. Note that $Q_2^{\prime}$ is an $(X,Y)$-path and then $\mathcal{Q^{\prime}}=(Q_1,Q_2^{\prime},Q_3)$ is a $2$-feasible set.}
		\label{fig-WQ}
	\end{figure}
	
	Observe that if one of the paths $Q_1,Q_2$, say $Q_i$, contains an arc from $V_1$ to $x$, then $Q_i$ must also contain the only out-arc $xy$ of $x$ in $D\left\langle  V_2\right\rangle$ and $x_1x\notin Q_{3-i}$ as $A(\mathcal{Q})$ contains no arc from $x$ to $V_1$. So we may assume w.l.o.g that $x_1x\notin Q_1$ and that $Q_1$ has no arcs from $V_1$ to $x$. Let $Q_2^{\prime}=x_1xt\cup Q_2[t,b]$, where $b$ is the last vertex of $Q_2$. Clearly $Q_2^{\prime}$ is also an $(\{x_1\},Y)$-path. Replacing $Q_2$ with $Q_2^{\prime}$ in $\mathcal{Q}$ and adding all subpaths  $uav$ of $Q_2[x_1,t^-]$ with $a\in V_1$ (see Figure \ref{fig-WQ} (b)), we obtain a $2$-feasible set $\mathcal{Q^{\prime}}$ containing the arc $xt$.  This means that the vertex $x$ gains one more out-neighbor and then $x\notin W_{\mathcal{Q^{\prime}}}^+$. Moreover, although $t^-$ is loosing one out-neighbor after the replacement, $t^-\notin W_{\mathcal{Q^{\prime}}}^+$ as it has at least two out-neighbors $x,x_1$ in $V_2$, consequently, in $D_{\mathcal{Q^{\prime}}}\left\langle  V_2\right\rangle$. Therefore, $|W_{\mathcal{Q^{\prime}}}^+|<|W_{\mathcal{Q}}^+|=|\{x\}|$ and $|W_{\mathcal{Q^{\prime}}}^-|=|W_{\mathcal{Q}}^-|$, which contradicts (C1).
	
		\begin{case}
		$|V(S_k)|=3$.
	\end{case}

	  Observe that in this case every vertex of $S_k$ has out-degree at least one in $D\left\langle  V_2\right\rangle$ as $S_k$ is strong. First we claim that the vertex $t$ that we fixed above belongs to $V(Q_1)\cup V(Q_2)$. If not, then Lemma \ref{lem-tx} (ii)-(iii) shows that there are two paths $Q_i=w_1tz_1, Q_j=w_2tz_2$ with $i,j\geq 3$ containing $t$. Moreover, each of $w_1$ and $w_2$ has out-degree at most two in $D_{\mathcal{Q}}\left\langle  V_2\right\rangle$. This and the fact that $d_{D\left\langle  V_2\right\rangle}^+(w)\geq 1$ for all $w\in V(S_k)$ imply that $w_1\neq w_2$, $V(S_k)=\{x,w_1,w_2\}$ and both $Q_1$ and $Q_2$ are starting from $x$, which contradicts Lemma \ref{lem-tx} (i). Thus $t\in V(Q_1)\cup V(Q_2)$.

	  Next we show that $t\in V(Q_1)\cap V(Q_2)$. Suppose to the contrary that $t\in V(Q_1)-V(Q_2)$. Let $a_1, a_2$ be the first vertex of $Q_1$ and $Q_2$, respectively. If $a_1= a_2$, then construct a new $2$-feasible set $\mathcal{Q^{\prime}}$ by replacing $Q_1$ by $xtQ_1[t,b]$, where $b$ is the last vertex of $Q_1$. It is not difficult to check that both $x$ and $a_1$ are not vertices in  $W_{\mathcal{Q^{\prime}}}^+$, which contradicts (C1). So we may assume that $a_1\neq a_2$ and thus $V(S_k)=\{x,a_1,a_2\}$. On the other hand, by Lemma \ref{lem-tx} (ii), there is a path $Q_i=wtz$ ($i\geq 3$) containing $t$ such that $w$ has out-degree at most two in $D_{\mathcal{Q}}\left\langle  V_2\right\rangle$. Then $w$ must be $x$, which contradicts Lemma \ref{lem-tx} (i). Therefore, $t$ belongs to $Q_1$ and $Q_2$.
	  
	  Now we claim that $t$ is the second vertex of $Q_i$ for all $i\in[2]$. Otherwise construct a new $2$-feasible set $\mathcal{Q^{\prime}}$ from $\mathcal{Q}$ by replacing $Q_i$ with $xtQ_i[t,b]$ and all subpaths $uav$ with $a\in V_1$ of $Q_i[a_i,t]$, we obtain a contradiction with  (C1) as $x\notin W_{\mathcal{Q^{\prime}}}^+$. This shows that $a_1\neq a_2$, $V(S_k)=\{x,a_1,a_2\}$ and there is an arc from each vertex of $S_k$ to $t$. Note that for any vertices $u,v\in V(S_k)$, the arcs $ut,vt$ can be the first arcs of the $(X,Y)$-paths in any given 2-feasible set $\mathcal{F}$. As $W_{\mathcal{F}}^+$ is not empty, the vertex in $V(S_k)-\{u,v\}$ must belong to $W_{\mathcal{F}}^+$. In other words, every vertex in $V(S_k)$ can be regarded as $x$ and then every vertex of $S_k$ has out-degree one in $S_k$, that is, $S_k$ is a 3-cycle.  This completes the proof of Lemma \ref{lem-Skcycle}.
\end{proof}

By symmetry, we have the following. 
\begin{lem}\label{lem-S1cycle}
	If $W_{\mathcal{Q}}^-$ is not empty for each 2-feasible set $\mathcal{Q}$, then $S_1$ is a 3-cycle and each vertex of $S_1$ has exactly one in-neighbor in $V_1$. Furthermore, they all have the same in-neighbor in $V_1$.
\end{lem}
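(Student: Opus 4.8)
The plan is to prove this lemma by a duality (arc-reversal) argument that reduces it directly to Lemma \ref{lem-Skcycle}, since the entire framework is invariant under reversing all arcs. Let $D^{rev}=(V_1,V_2;A^{rev})$ denote the converse of $D$, obtained by reversing every arc. Then $V_1$ is still independent, $D^{rev}\langle V_2\rangle$ is still semicomplete, $D^{rev}$ is still 2-arc-strong, and every vertex of $V_1$ still has both out- and in-degree at least $3$ (the two degrees merely swap). Thus $D^{rev}$ satisfies all the standing hypotheses of Theorem \ref{thm-Arcdecsplit} and of Lemma \ref{lem-Skcycle}, and the whole construction may be carried out for it.

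First I would check that the auxiliary objects transform as expected. The acyclic ordering $S_1,\ldots,S_k$ of the strong components of $D\langle V_2\rangle$ reverses to $S_k,\ldots,S_1$ in $D^{rev}\langle V_2\rangle$, so the terminal strong component of $D^{rev}\langle V_2\rangle$ is $S_1$. Moreover, if $|V(S_1)|\geq 4$ and $(W_1,\ldots,W_p)$ is the nice decomposition of $S_1$ in $D$, then reversing its ordering to $(W_p,\ldots,W_1)$ yields a nice decomposition of $S_1$ in $D^{rev}$: each induced subdigraph remains strong under reversal, and an arc is backward in the reversed ordering of the reversed digraph exactly when it was a backward (equivalently, cut-) arc before. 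By the uniqueness part of Theorem \ref{thm-nicedecom} this is the nice decomposition of $S_1$ in $D^{rev}$. Consequently the set playing the role of $X$ for $D^{rev}$ — the terminal block of the terminal component's nice decomposition — is exactly the set $Y$ of $D$ (and in the small case $|V(S_1)|\leq 3$ both equal $V(S_1)$); symmetrically, the set playing the role of $Y$ for $D^{rev}$ is the set $X$ of $D$.

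Next I would set up the correspondence on feasible sets. Reversing every path of a $2$-feasible set $\mathcal{Q}=(Q_1,\ldots,Q_{\alpha})$ of $D$ produces $\mathcal{Q}^{rev}=(Q_1^{rev},\ldots,Q_{\alpha}^{rev})$: the reversed $(X,Y)$-paths $Q_1,Q_2$ become $(Y,X)$-paths, i.e. precisely the two required $(X,Y)$-paths for $D^{rev}$, while each $Q_i=u_it_iv_i$ ($i\geq 3$) reverses to $v_it_iu_i$, again a length-two path through a vertex $t_i\in V_1$. Arc-disjointness and $\gamma$-feasibility are preserved, so $\mathcal{Q}\mapsto\mathcal{Q}^{rev}$ is a bijection between $2$-feasible sets of $D$ and of $D^{rev}$. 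Since splitting off a path commutes with reversal, $(D^{rev})_{\mathcal{Q}^{rev}}\langle V_2\rangle$ is the converse of $D_{\mathcal{Q}}\langle V_2\rangle$; hence the in-degree of a vertex in $D_{\mathcal{Q}}\langle V_2\rangle$ equals its out-degree in $(D^{rev})_{\mathcal{Q}^{rev}}\langle V_2\rangle$, giving $W_{\mathcal{Q}}^-=W_{\mathcal{Q}^{rev}}^+$.

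Putting these together, the hypothesis that $W_{\mathcal{Q}}^-\neq\emptyset$ for every $2$-feasible set $\mathcal{Q}$ of $D$ is equivalent to $W_{\mathcal{Q}'}^+\neq\emptyset$ for every $2$-feasible set $\mathcal{Q}'$ of $D^{rev}$. Applying Lemma \ref{lem-Skcycle} to $D^{rev}$ then shows that the terminal component of $D^{rev}\langle V_2\rangle$, namely $S_1$, is a $3$-cycle (a $3$-cycle stays a $3$-cycle under reversal), that each of its vertices has exactly one out-neighbor in $V_1$ in $D^{rev}$ — i.e. exactly one in-neighbor in $V_1$ in $D$ — and that this neighbor is common to all three vertices. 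This is exactly the desired conclusion. The only real work is the bookkeeping of the two middle paragraphs: confirming that the nice decomposition, the sets $X,Y$, the feasible sets, and the quantities $W^+,W^-$ all dualize correctly, with the uniqueness of the nice decomposition being what pins down the reversed decomposition as the genuine one. I expect this verification, rather than any new combinatorial idea, to be the main (and essentially the only) obstacle.
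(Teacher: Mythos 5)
Your proposal is correct and is essentially the paper's own argument: the paper derives Lemma \ref{lem-S1cycle} from Lemma \ref{lem-Skcycle} purely ``by symmetry,'' i.e.\ by the arc-reversal duality you spell out. Your verification that the nice decomposition, the sets $X,Y$, the $2$-feasible sets, and $W^+,W^-$ all dualize correctly is exactly the bookkeeping the paper leaves implicit.
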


\bigskip
\noindent {\bf Completing the proof of Theorem \ref{thm-Arcdecsplit}.}
\medskip

Let $\mathcal{Q}$ be a $2$-feasible set satisfying (C1). Recall that either $W_{\mathcal{Q}}^+\neq \emptyset$ or $W_{\mathcal{Q}}^-\neq \emptyset$. By reversing all arcs of $D$ if necessary, we may assume that $W_{\mathcal{Q}}^+\neq \emptyset$. 

It follows by Lemma \ref{lem-Skcycle} that the terminal component $S_k$ of $D\left\langle  V_{2}\right\rangle$ is a 3-cycle. Let $S_k=u_1u_2u_3u_1$.  Moreover, by Lemma \ref{lem-Skcycle}, every vertex of $V(S_k)$ has exactly one out-neighbor in $V_1$ and the out-neighbor of them in $V_1$ is the same vertex. Suppose that $t$ is the common out-neighbor of vertices $u_1, u_2$ and $u_3$ in $V_1$. Recall that $t$ has out-degree at least three and $Q_1, Q_2$ are two $(V(S_k),Y)$-paths. As $S_k$ is a 3-cycle, the vertices $u_1,u_2$ and $u_3$ are symmetrical. Relabeling $u_1,u_2$ and $u_3$ if necessary, we may assume that $u_it\in Q_i$ for all $i\in[2]$ and $t$ has an out-arc $tz\in D-u_3$ which is not in $Q_1\cup Q_2$. Note that $u_3$ is the only vertex of $V_2$ with out-degree exactly one in $D_{\mathcal{Q}}\left\langle  V_{2}\right\rangle$, that is, $W_{\mathcal{Q}}^+=\{u_3\}$. By symmetry, if $W_{\mathcal{Q}}^-\neq \emptyset$, then let $S_1=v_1v_2v_3v_1$. Moreover, we may assume that $t^{\prime}v_i$ $(t^{\prime}\in V_1)$ is the last arc of $Q_i$ for $i\in[2]$ and $t^{\prime}$ has an in-neighbor in $wt^{\prime}$ in $D-v_3$. Clearly, $W_{\mathcal{Q}}^-=\{v_3\}$.

Let $\mathcal{Q^{\prime}}=\mathcal{Q}\cup\{u_3tz\}$ if $W_{\mathcal{Q}}^-= \emptyset$ and let $\mathcal{Q^{\prime}}=\mathcal{Q}\cup\{u_3tz\}\cup\{wt^{\prime}v_3\}$ if we also have $W_{\mathcal{Q}}^-\neq \emptyset$, see Figure \ref{fig-proof} (b). It should be mentioned that $\mathcal{Q^{\prime}}$ is not $2$-feasible as $t$ (resp., $t^{\prime}$) is in three paths of $\mathcal{Q^{\prime}}$. Moreover, each vertex of $V_1$, except for $t$ and $t^{\prime}$, belongs to at most two paths in $\mathcal{Q^{\prime}}$. Let $D_{\mathcal{Q^{\prime}}}$ be the directed multigraph obtained from $D$ by splitting off $\mathcal{Q^{\prime}}$. Now we claim that $W_{\mathcal{Q^{\prime}}}^+=\emptyset$. In fact, note that each $u_i\in V(S_k)$ has out-degree exactly two in $D_{\mathcal{Q^{\prime}}}\left\langle  V_{2}\right\rangle$ and every vertex of $V_2-V(S_k)$ has out-degree at least 3 in $D\left\langle  V_{2}\right\rangle$ as it dominates all vertices of $V(S_k)$. Therefore, $W_{\mathcal{Q^{\prime}}}^+=\emptyset$ and we have $W_{\mathcal{Q^{\prime}}}^-=\emptyset$ in the same way. 
	\begin{figure}[H]
		\centering
			\subfigure{\begin{minipage}[t]{0.45\linewidth}
				\centering\begin{tikzpicture}[scale=0.8]	
				\coordinate [label=center:$V_2$] () at (2.5,4);
				\coordinate [label=center:$V_1$] () at (-0.5,4);
				\draw[line width=1.5pt] (4.5,1) -- (4.5,-1); 
				
				\filldraw[black](1.5,3) circle (3pt)node[label=left:$v_1$](v1){};
				\filldraw[black](3.5,3) circle (3pt)node[label=right:$v_2$](v2){};
				\filldraw[black](2.5,2) circle (3pt)node[label=right:{$v_3$}](v3){};
				\filldraw[black](1.5,0) circle (3pt)node[label=above:$w_1$](w1){};
				\filldraw[black](2.5,0) circle (3pt)node[label=above:$w_2$](w2){};
				\filldraw[black](3.5,0) circle (3pt)node[label=above:$w_3$](w3){};				
			\filldraw[black](1.5,-3) circle (3pt)node[label=below:$u_1$](u1){};
		\filldraw[black](3.5,-3) circle (3pt)node[label=below:$u_2$](u2){};
	\filldraw[black](2.5,-2) circle (3pt)node[label=right:{$u_3$}](u3){};

\filldraw[black](-0.5,2) circle (3pt)node[label=left:$t^{\prime}$](tt){};
\filldraw[black](-0.5,-2) circle (3pt)node[label=left:$t$](t){};
\foreach \i/\j/\b in {
	v1/v2/0,
	v2/v3/0,
	v3/v1/0,
	u1/u2/0,
	u2/u3/0,
	u3/u1/0,
	w1/w2/0,
	w2/w3/0,
	u1/t/0,
	u2/t/0,
	u3/t/0,
	t/u1/20,
	t/w2/0,
	t/w3/10,
	w2/tt/0,
	w3/tt/10,
	tt/v1/0,
	tt/v2/0,
	tt/v3/0
}{\path[draw, line width=0.8pt] (\i) edge[bend right=\b] (\j);}
\path[draw, line width=0.8pt] (w1) edge[bend left=10] (tt);
\path[draw, line width=0.8pt] (w3) edge[bend left=25] (w1);
				\end{tikzpicture}\caption*{(a) $D$}\end{minipage}}
		\subfigure{\begin{minipage}[t]{0.45\linewidth}
				\centering\begin{tikzpicture}[scale=0.8]	
				%\coordinate [label=center:$V_2$] () at (2.5,4);
				
				\draw[line width=1.5pt] (4.5,1) -- (4.5,-1); 
			
				\filldraw[black](1.5,3) circle (3pt)node[label=left:$v_1$](v1){};
				\filldraw[black](3.5,3) circle (3pt)node[label=right:$v_2$](v2){};
				\filldraw[black](2.5,2) circle (3pt)node[label=right:{$v_3$}](v3){};
				\filldraw[black](1.5,0) circle (3pt)node[label=left:$w_1$](w1){};
				\filldraw[black](2.5,0) circle (3pt)node[label=above:$w_2$](w2){};
				\filldraw[black](3.5,0) circle (3pt)node[label=right:$w_3$](w3){};				
				\filldraw[black](1.5,-3) circle (3pt)node[label=below:$u_1$](u1){};
				\filldraw[black](3.5,-3) circle (3pt)node[label=below:$u_2$](u2){};
				\filldraw[black](2.5,-2) circle (3pt)node[label=right:{$u_3$}](u3){};

				\foreach \i/\j/\b in {
					v1/v2/0,
					v2/v3/0,
					v3/v1/0,
					u1/u2/0,
					u2/u3/0,
					u3/u1/0,
					w1/w2/0,
					w2/w3/0		
				}{\path[draw, line width=0.8pt] (\i) edge[bend right=\b] (\j);}
				
				\path[draw, line width=0.8pt] (w3) edge[bend left=30] (w1);
					\path[draw, dashed, line width=0.8pt] (u3) edge[bend left=25] (u1);
						\path[draw, dashed, line width=0.8pt] (w2) edge[bend left=25] (v1);
					\foreach \i/\j in {
						u2/w3,
						w3/v2,
						u1/w2,
						w1/v3}{\path[draw, dashed, line width=0.8 pt] (\i) edge[] (\j);}
				\end{tikzpicture}\caption*{(b) $D_{\mathcal{Q^{\prime}}}\left\langle  V_{2}\right\rangle$}\end{minipage}}
		\caption{An illustration of a set $\mathcal{Q^{\prime}}=(Q_1,Q_2,Q_3,Q_4)$ which is not $2$-feasible, where $Q_1=u_1tw_2t^{\prime}v_1, Q_2=u_2tw_3t^{\prime}v_2, Q_3=u_3tz$ with $z=u_1$ and $Q_4=wt^{\prime}v_3$ with $w=w_1$. Both $S_1$ and $S_k$ are 3-cycles and all arcs not shown are from top to bottom. The 2-arc-strong directed multigraph in (b) is obtained from $D\left\langle  V_{2}\right\rangle$ by splitting off all paths in $\mathcal{Q^{\prime}}$. The dashed arcs in (b) are splitting arcs.}
		\label{fig-proof}
	\end{figure}

	By Lemma \ref{lem-cutarc} and the fact that the two $(X,Y)$-paths in $\mathcal{Q^{\prime}}$, i.e., $Q_1,Q_2$, are starting from and ending at two distinct vertices, we have that $D_{\mathcal{Q^{\prime}}}\left\langle  V_{2}\right\rangle$ is 2-arc-strong. It follows by Theorem \ref{thm-SDM} that either $D_{\mathcal{Q^{\prime}}}\left\langle  V_{2}\right\rangle$ has a strong arc decomposition, or $D_{\mathcal{Q^{\prime}}}\left\langle  V_{2}\right\rangle$ is isomorphic to one of the directed multigraphs shown in Figure \ref{fig-DMwithoutAD}. If the former case holds, \yw{repeating the proof of Lemma  \ref{lem-Dstar} with $D^{\ast}=D_{\mathcal{Q^{\prime}}}$ one may obtain a strong arc decomposition of $D$. Here it should be noted that the condition `splitting off at most two pairs at every vertex in $V_1$' in the statement of  Lemma \ref{lem-Dstar} only be used for vertices in $V(D_i)-V(D_{3-i})$ with $i\in[2]$. Therefore, although we splitting three pairs at $t$ and $t^{\prime}$ and splitting at most two pairs at every vertex in $V_1-\{t,t^{\prime}\}$, as $t,t^{\prime}\in V(D_i)\cap V(D_{3-i})$, we also can obtain a strong arc decomposition of $D$ in the same way mentioned in the proof of Lemma \ref{lem-Dstar}.} If the latter case holds,  $D$ also has a strong arc decomposition by Lemma \ref{lem-D2starsamll}, which completes the proof of Theorem \ref{thm-Arcdecsplit}. $\square$

\bigskip

\yw{Observe that in the proof of Theorem \ref{thm-Arcdecsplit}, if a given 2-feasible set $\mathcal{Q}$ violates the condition (C1), then we can construct a new 2-feasible set $\mathcal{Q^{\prime}}$ with  $|W_{\mathcal{Q^{\prime}}}^+|+|W_{\mathcal{Q^{\prime}}}^-|< |W_{\mathcal{Q}}^+|+|W_{\mathcal{Q}}^-|$ from $\mathcal{Q}$ by  a slight modification of $\mathcal{Q}$. This means that we may get a 2-feasible set satisfying (C1) in polynomial time and combing this observation with the fact that all other steps in the proof are constructive, we conclude  the following holds.}
  \begin{thm}\label{cor-polalg}
    There exists a polynomial algorithm which given a 2-arc-strong split digraph $D=(V_1,V_2;A)$ such that each vertex of $V_1$ has minimum in- and out-degree at least 3, finds a strong arc decomposition of $D$.
    \end{thm}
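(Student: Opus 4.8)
The plan is to verify that every non-constructive choice made in the proof of Theorem~\ref{thm-Arcdecsplit} can be replaced by a polynomial-time procedure, so that the whole argument becomes an algorithm. First I would observe that all the structural results invoked along the way already come with polynomial constructions: Theorem~\ref{thm-nicedecom} produces the nice decomposition in polynomial time, and Theorem~\ref{thm-SDM} (hence Lemmas~\ref{lem-Dstar}, \ref{lem-DV2small}, \ref{lem-D2starsamll} and Corollary~\ref{coro-DV2star2ArcStr}) finds the strong arc decomposition of a semicomplete directed multigraph in polynomial time whenever one exists. The sets $X,Y$ and the components $S_1,\dots,S_k$ are computed by a standard strong-component decomposition of $D\langle V_2\rangle$, which is linear in the size of $D$.

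The crux is to produce a $2$-feasible set $\mathcal{Q}$ satisfying condition~(C1) in polynomial time, since (C1) is defined as a \emph{minimizer} of $|W_{\mathcal{Q}}^+|+|W_{\mathcal{Q}}^-|$ and one cannot enumerate all $2$-feasible sets. Here I would lean on the key observation already recorded in the excerpt: whenever a $2$-feasible set $\mathcal{Q}$ fails to attain the minimum, the proofs of Lemmas~\ref{lem-cutarc}, \ref{lem-tx} and \ref{lem-Skcycle} each describe an explicit local modification (rerouting a single arc such as $t^-t \mapsto t^-xt$, or swapping one short path $Q_i$ for another) that yields a new $2$-feasible set $\mathcal{Q}'$ with $|W_{\mathcal{Q}'}^+|+|W_{\mathcal{Q}'}^-| < |W_{\mathcal{Q}}^+|+|W_{\mathcal{Q}}^-|$. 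Starting from any two arc-disjoint $(X,Y)$-paths (obtainable by a flow/arc-connectivity computation, which exists because $D$ is $2$-arc-strong), I would repeatedly detect such a reducing modification and apply it. Since the potential $|W_{\mathcal{Q}}^+|+|W_{\mathcal{Q}}^-|$ is a nonnegative integer bounded by $|V_2|$ and strictly decreases at each step, at most $|V_2|$ reductions occur; each reduction requires only checking the local conditions of those lemmas, which is polynomial. The process therefore terminates with a $2$-feasible set satisfying~(C1).

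Once such a $\mathcal{Q}$ is in hand, the remainder of the proof of Theorem~\ref{thm-Arcdecsplit} is already constructive: by reversing arcs if necessary we arrange $W_{\mathcal{Q}}^+\neq\emptyset$; Lemma~\ref{lem-Skcycle} pins down $S_k$ as a $3$-cycle with a common out-neighbor $t$ (all verifiable directly), we augment $\mathcal{Q}$ to $\mathcal{Q}'$ by adding the explicitly named paths $u_3tz$ and $wt'v_3$, and we split off $\mathcal{Q}'$ to obtain $D_{\mathcal{Q}'}$ in linear time. By Lemma~\ref{lem-cutarc} the digraph $D_{\mathcal{Q}'}\langle V_2\rangle$ is $2$-arc-strong, so Theorem~\ref{thm-SDM} either returns a strong arc decomposition or certifies one of the finitely many exceptions of Figure~\ref{fig-DMwithoutAD}; in the first case the lifting procedure of Lemma~\ref{lem-Dstar} reconstructs a decomposition of $D$, and in the exceptional case Lemma~\ref{lem-D2starsamll} does so, both in polynomial time.

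The main obstacle I expect is making the reduction loop for~(C1) genuinely \emph{algorithmic} rather than merely existential: the lemmas are phrased as contradictions (``if $\mathcal{Q}$ does not satisfy~(C1) then some witness exists''), so I must extract from each proof a polynomial-time \emph{detector} that, given $\mathcal{Q}$ with $W_{\mathcal{Q}}^+\cup W_{\mathcal{Q}}^-\neq\emptyset$ and $D_{\mathcal{Q}}\langle V_2\rangle$ not $2$-arc-strong, either certifies~(C1) or outputs a concrete reducing modification. This amounts to case-analysing a fixed vertex $x\in W_{\mathcal{Q}}^+$, locating its unique out-arc and its out-neighbours in $V_1$, and testing membership of the relevant vertices on $Q_1,Q_2$ — all bookkeeping over the $O(|V_2|)$ candidate vertices and the at most $O(|A|)$ paths, hence polynomial. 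Verifying $2$-arc-strongness and cut-arc conditions of the intermediate multigraphs reduces to a bounded number of reachability/min-cut computations, which are standard and polynomial. With these detectors assembled, the overall running time is bounded by $|V_2|$ iterations of polynomially many connectivity tests plus one call to the algorithm of Theorem~\ref{thm-SDM}, giving the claimed polynomial algorithm.
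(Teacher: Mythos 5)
Your proposal is correct and follows essentially the same route as the paper: the authors likewise observe that every step of the proof of Theorem \ref{thm-Arcdecsplit} is constructive except for the minimization in (C1), and that whenever the current $2$-feasible set violates (C1) the proofs supply an explicit modification strictly decreasing $|W_{\mathcal{Q}}^+|+|W_{\mathcal{Q}}^-|$, so a local-improvement loop reaches a suitable $\mathcal{Q}$ in polynomially many iterations. Your write-up is in fact more detailed than the paper's one-paragraph justification, in particular in making explicit that the lemmas' contradiction arguments must be converted into detectors that either let the construction proceed or output an improving modification.
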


\section{Arc-disjoint out- and in-branchings in split digraphs}\label{sec-GP}

Recall the definitions of in-branchings and out-branchings from the introduction. A relaxation of the problem of whether a given digraph $D$ has a strong arc decomposition is to ask whether it contains an out-branching and an  in-branching which are arc-disjoint. Such pair will be called \textbf{a good pair} in below and more precisely we call it a \textbf{good $(u,v)$-pair} if the roots $u$ and $v$, of the out- and in-branchings respectively, are specified.

Clearly, a strong digraph contains an out-branching (resp., an in-branching) with arbitrary given root. Thus the following simple observation holds.

\begin{obse}\label{ob-Arcdecom-GP}
	Every digraph with a strong arc decomposition contains a good $(u,v)$-pair for every choice of $u,v$.
\end{obse}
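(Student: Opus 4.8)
The plan is to read off the good pair directly from the two strong spanning subdigraphs furnished by a strong arc decomposition, using only the elementary fact (already quoted just before the statement) that a strong digraph admits an out-branching and an in-branching rooted at any prescribed vertex. So let $D=(V,A)$ have a strong arc decomposition $(A_1,A_2)$, meaning $A_1\cap A_2=\emptyset$ and both $D_1=(V,A_1)$ and $D_2=(V,A_2)$ are strong, and fix an arbitrary pair $u,v\in V$.

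First I would invoke strong connectivity of $D_1$: since every vertex is reachable from $u$ in $D_1$, a breadth-first (or depth-first) search out-tree from $u$ inside $D_1$ is a spanning oriented tree in which every vertex except $u$ has in-degree one, i.e.\ an out-branching $B^+_u$ rooted at $u$ with $A(B^+_u)\subseteq A_1$. Symmetrically, strong connectivity of $D_2$ gives, by the reverse search from $v$, an in-branching $B^-_v$ rooted at $v$ with $A(B^-_v)\subseteq A_2$. Both constructions work for \emph{every} choice of root because strongness guarantees reachability to and from each vertex.

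The only remaining point, and it is immediate rather than an obstacle, is arc-disjointness: $A(B^+_u)\subseteq A_1$ and $A(B^-_v)\subseteq A_2$, and $A_1\cap A_2=\emptyset$, so $B^+_u$ and $B^-_v$ share no arc. Hence $(B^+_u,B^-_v)$ is a good $(u,v)$-pair. Since $u,v$ were arbitrary, this proves the statement. I do not expect any genuine difficulty here: the whole content is that the partition into two strong subdigraphs automatically separates the out-branching from the in-branching, and the existence of rooted branchings in a strong digraph is a standard search-tree argument.
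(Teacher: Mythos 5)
Your argument is correct and is exactly the paper's: the observation is stated right after the remark that a strong digraph has an out-branching (resp.\ in-branching) with any prescribed root, so one takes the out-branching rooted at $u$ inside $D_1$ and the in-branching rooted at $v$ inside $D_2$, and arc-disjointness is automatic from $A_1\cap A_2=\emptyset$. Nothing further is needed.
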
 

The above observation and Theorem \ref{thm-Arcdecsplit} imply the following.

\begin{thm}\label{thm-Gpairsplit}
	Let $D=(V_1,V_2;A)$ be a 2-arc-strong split digraph such that $V_1$ is independent and  $D\left\langle  V_{2}\right\rangle$ is semicomplete. Suppose that every vertex of $V_1$ has both in- and out-degree at least 3 in $D$. Then $D$ has a good $(u,v)$-pair for every choice of $u,v$.
\end{thm}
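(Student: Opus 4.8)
The plan is to recognize that this statement is an immediate consequence of the two results that precede it, so the proof requires no new combinatorial work. First I would invoke Theorem \ref{thm-Arcdecsplit}: the hypotheses here are verbatim those of that theorem, namely that $D=(V_1,V_2;A)$ is a 2-arc-strong split digraph in which $V_1$ is independent, $D\langle V_2\rangle$ is semicomplete, and every vertex of $V_1$ has both in- and out-degree at least $3$ in $D$. Hence Theorem \ref{thm-Arcdecsplit} applies directly and furnishes a strong arc decomposition $(A_1,A_2)$ of $D$, so that both spanning subdigraphs $D_1=(V,A_1)$ and $D_2=(V,A_2)$ are strong and $A_1\cap A_2=\emptyset$.

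Next I would appeal to Observation \ref{ob-Arcdecom-GP}, which is precisely the bridge between strong arc decompositions and good pairs. The underlying reasoning is that a strong digraph contains an out-branching and an in-branching rooted at any prescribed vertex. Thus, for an arbitrary choice of $u,v\in V$, the strong subdigraph $D_1$ contains an out-branching rooted at $u$ and the strong subdigraph $D_2$ contains an in-branching rooted at $v$. Because these two branchings draw their arcs from the disjoint sets $A_1$ and $A_2$ respectively, they are automatically arc-disjoint, which is exactly a good $(u,v)$-pair. Since $u$ and $v$ were arbitrary, this holds for every choice of roots, completing the argument.

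I expect no genuine obstacle in this proof: all of the combinatorial difficulty has already been absorbed into the proof of Theorem \ref{thm-Arcdecsplit}. The only point requiring a moment's care is the bookkeeping of directions—assigning the out-branching to the prescribed root $u$ and the in-branching to the prescribed root $v$—but this matching is exactly what Observation \ref{ob-Arcdecom-GP} records, so nothing beyond quoting these two results is needed.
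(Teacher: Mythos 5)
Your proposal is correct and matches the paper exactly: the paper derives Theorem \ref{thm-Gpairsplit} as an immediate consequence of Theorem \ref{thm-Arcdecsplit} combined with Observation \ref{ob-Arcdecom-GP}, which is precisely the two-step argument you give. Nothing further is needed.
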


\begin{coro}\label{coro-3arcstr}
	Every 3-arc-strong split digraph contains a good $(u,v)$-pair for every choice of $u,v$.
\end{coro}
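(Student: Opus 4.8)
The plan is to deduce this corollary directly from Theorem \ref{thm-Gpairsplit}, which is the real workhorse just established; the corollary itself carries no new content, so my task reduces to verifying that a 3-arc-strong split digraph meets the hypotheses of that theorem. I would first invoke the standard fact that a $k$-arc-strong digraph has minimum in-degree and minimum out-degree at least $k$. The reasoning is short: if some vertex $v$ had in-degree at most $k-1$, then deleting its (at most $k-1$) in-arcs would leave $v$ with no in-arc, so no vertex could reach $v$, contradicting the requirement that the digraph stay strong after deletion of any $k-1$ arcs; the out-degree bound is symmetric. Applying this with $k=3$ shows that in a 3-arc-strong split digraph $D=(V_1,V_2;A)$ every vertex, and in particular every vertex of the independent set $V_1$, has both in- and out-degree at least 3.

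Second, I would note that 3-arc-strong trivially implies 2-arc-strong (deleting at most one arc is a special case of deleting at most two). Hence $D$ is a 2-arc-strong split digraph whose $V_1$-vertices all have in- and out-degree at least 3, so every hypothesis of Theorem \ref{thm-Gpairsplit} is satisfied. That theorem then provides a good $(u,v)$-pair for every choice of $u,v$, which is exactly the statement of Corollary \ref{coro-3arcstr}.

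I do not expect any genuine obstacle here: all the difficulty lives in the main result, Theorem \ref{thm-Arcdecsplit}, on which Theorem \ref{thm-Gpairsplit} rests. As an alternative route, one could bypass Theorem \ref{thm-Gpairsplit} entirely and argue that Corollary \ref{coro-3arcSAD} supplies a strong arc decomposition of $D$, after which Observation \ref{ob-Arcdecom-GP} upgrades this decomposition into a good $(u,v)$-pair for every $u,v$. I would present whichever route keeps the dependency chain cleanest, most naturally the one-line reduction to Theorem \ref{thm-Gpairsplit}, since that result is the immediately preceding specialization and already packages together the degree condition and the good-pair conclusion.
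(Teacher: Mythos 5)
Your proposal is correct and matches the paper's (implicit) argument exactly: the corollary follows from Theorem \ref{thm-Gpairsplit} once one observes that a 3-arc-strong digraph is 2-arc-strong and has minimum in- and out-degree at least 3, so in particular every vertex of $V_1$ satisfies the degree hypothesis. Your alternative route via Corollary \ref{coro-3arcSAD} and Observation \ref{ob-Arcdecom-GP} is also valid and is essentially how the paper derives Theorem \ref{thm-Gpairsplit} itself.
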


A split digraph $D=(V_1,V_2;A)$ is \textbf{semicomplete} if every vertex in the independent set, i.e., $V_1$, is adjacent to every vertex in $V_2$. Below we prove that arc-connectivity 2 is sufficient to guarantee a good pair with the same root $u$, that is, a good $(u,u)$-pair, in semicomplete split digraphs.

\begin{lem} \label{(D-X)->Dpair}\cite{bangSDCbranchings}
	Let $D$ be a digraph and let $X$ be a subset of $V(D)$ such that every vertex of $D-X$ has both an in-neighbor and an out-neighbor in $X$. If $X$ has a good $(u,v)$-pair then $D$ has a good $(u,v)$-pair. 
\end{lem}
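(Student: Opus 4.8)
The plan is to build a good $(u,v)$-pair of $D$ directly from one of $D\langle X\rangle$ by attaching every vertex outside $X$ with a single pair of arcs, in the same spirit as the proof of Lemma \ref{(D-X)->D}. The key observation is that, unlike the strong-arc-decomposition setting where each external vertex needed \emph{two} in- and \emph{two} out-neighbors (one pair for each of $D_1,D_2$), here a single in-neighbor and a single out-neighbor suffice: the out-branching only needs one arc entering each external vertex, while the in-branching only needs one arc leaving it, and an entering arc can never coincide with a leaving arc.

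Concretely, I would first fix a good $(u,v)$-pair of $D\langle X\rangle$, say an out-branching $F^+$ rooted at $u$ together with an arc-disjoint in-branching $F^-$ rooted at $v$; their arcs all lie inside $X$. Using the hypothesis, for each $w\in V(D)\setminus X$ I would select an in-neighbor $w^-\in X$ and an out-neighbor $w^+\in X$, and then set $B^+=F^+\cup\{w^-w:w\in V(D)\setminus X\}$ and $B^-=F^-\cup\{ww^+:w\in V(D)\setminus X\}$. The claim is that $(B^+,B^-)$ is a good $(u,v)$-pair of $D$.

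To verify this I would check three points. That $B^+$ is an out-branching rooted at $u$ follows because the added arcs all point from $X$ into the remainder $V(D)\setminus X$, so every vertex of $X$ keeps its in-degree from $F^+$ (in particular $u$ has in-degree $0$), each external $w$ receives in-degree exactly $1$ via $w^-w$, and $w$ is reachable from $u$ by following a path of $F^+$ to $w^-$ and then the arc $w^-w$; no cycle is introduced since no added arc leaves $V(D)\setminus X$. The statement that $B^-$ is an in-branching rooted at $v$ is symmetric. For arc-disjointness, $F^+$ and $F^-$ are disjoint by hypothesis and lie inside $X$; each arc $w^-w$ has its head in $V(D)\setminus X$ and each arc $ww^+$ has its tail in $V(D)\setminus X$, so no added arc equals an arc of the opposite branching lying in $X$, and an added in-arc differs from every added out-arc because one enters while the other leaves $V(D)\setminus X$. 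I do not anticipate a genuine obstacle; the only point deserving care is confirming acyclicity together with the in-degree profile of $B^+$ after the attachment, which is immediate from the consistent orientation of the attaching arcs into the remainder and the fact that each external vertex receives exactly one of them.
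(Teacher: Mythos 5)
Your construction is correct, and it is exactly the single-branching analogue of the paper's proof of Lemma \ref{(D-X)->D}: attach each vertex of $D-X$ to the out-branching by one entering arc from $X$ and to the in-branching by one leaving arc into $X$, with arc-disjointness guaranteed because every added arc has exactly one endpoint outside $X$ and the two types of added arcs are oriented oppositely across the cut. The paper itself only cites this lemma from \cite{bangSDCbranchings} without reproducing a proof, but your argument is the standard one and is complete.
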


\begin{thm}\label{thm-S}
	Let $D$ be a 2-arc-strong digraph and let $u$ be a vertex of $D$. Suppose that $S$ is a semicomplete induced subdigraph of $D$ containing the vertex $u$. If every vertex in $D-S$ has at least two in- and out-neighbors in $V(S)$, then there is a good $(u,u)$-pair in $D$.
\end{thm}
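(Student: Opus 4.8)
The plan is to reduce, via Lemma~\ref{(D-X)->Dpair} together with the splitting-off operation of Definition~\ref{def-splitting}, to the problem of finding a good $(u,u)$-pair in a semicomplete multigraph on the vertex set $V(S)$, and then to lift such a pair back to $D$. The hypothesis that every $w\in D-S$ has two in- and two out-neighbours in $S$ is exactly what powers the lift: one in-neighbour lets the out-branching reach $w$ and one out-neighbour lets $w$ reach $u$ in the in-branching, while the remaining neighbours give the freedom needed to keep the two branchings arc-disjoint.

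Concretely, I would first dispose of the cases $|V(S)|\le 3$ and $D=S$ by a direct argument, so assume $|V(S)|\ge 4$. Let $S_1,\dots,S_p$ be the acyclic order of the strong components of $S$ and put $X=V(S_p)$, $Y=V(S_1)$, as in Section~\ref{sec-proof}. Using that $D$ is 2-arc-strong I would choose a family of arc-disjoint paths with both endpoints in $V(S)$ and all interior vertices in $D-S$, at least two of which are $(X,Y)$-paths, and split them off to obtain a semicomplete multigraph $D^{*}$ on $V(S)$. By the argument of Lemma~\ref{lem-DPstrong}, splitting off a single $(X,Y)$-path already makes $D^{*}$ strong; the aim is to choose the family so that $D^{*}$ becomes 2-arc-strong, or at least strong with minimum in- and out-degree at least two.

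If $D^{*}$ is a 2-arc-strong semicomplete multigraph on at least four vertices, then Theorem~\ref{thm-SDM} yields a strong arc decomposition of $D^{*}$ unless $D^{*}$ is one of $S_4,S_{4,1},S_{4,2},S_{4,3}$; in the first case Observation~\ref{ob-Arcdecom-GP} converts the decomposition into a good $(u,u)$-pair of $D^{*}$, and in the second case one verifies by hand that each exception has a good $(u,u)$-pair. To lift a good $(u,u)$-pair $(O^{*},I^{*})$ of $D^{*}$ to $D$, I would replace each splitting arc of $O^{*}$ (resp.\ of $I^{*}$) by the path through its vertex $w\in D-S$, thereby covering $w$ in that branching, and cover $w$ in the other branching through its spare out- or in-neighbour in $S$; vertices of $D-S$ met by no splitting arc are attached using Lemma~\ref{(D-X)->Dpair}. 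A routine check then confirms that the two resulting subdigraphs are arc-disjoint and form an out- and an in-branching rooted at $u$.

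The crux is the choice of the splitting family in the second step. This is the same difficulty that dominates the proof of Theorem~\ref{thm-Arcdecsplit}, where the sets $W_{\mathcal{Q}}^{+}$ and $W_{\mathcal{Q}}^{-}$ of vertices of out- or in-degree one obstruct 2-arc-strongness of the split-off semicomplete part. Here the degree hypothesis is only two rather than three, so I do not expect to always reach a 2-arc-strong $D^{*}$; instead I expect to settle for $D^{*}$ strong with minimum in- and out-degree at least two, and to prove separately (or to borrow from \cite{bangSDCbranchings}) that every such semicomplete multigraph has a good $(u,u)$-pair. Establishing this weaker sufficient condition, and carrying out the accompanying analysis of the terminal and initial components of $S$ in the spirit of Lemmas~\ref{lem-Skcycle} and~\ref{lem-S1cycle}, is where I expect the real work to lie.
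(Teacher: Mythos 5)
Your plan reduces the theorem to a statement you never establish: that the split-off semicomplete multigraph $D^{*}$ --- which under the degree-two hypothesis you yourself concede may only be strong with minimum in- and out-degree two, rather than 2-arc-strong --- has a good $(u,u)$-pair. You explicitly flag this as where the real work lies and propose to borrow it or prove it separately; nothing in the paper closes that hole for you (Theorem \ref{thm-SDM} needs 2-arc-strongness), so as written the argument is a programme, not a proof. The lifting step is also only asserted (``a routine check''), and it is not quite routine: a vertex $w\in D-S$ may lie on splitting arcs used by both $O^{*}$ and $I^{*}$, or by neither, and the spare neighbour you attach it by must avoid the arcs already consumed by the lifted paths.

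The paper's proof avoids this machinery entirely and is worth contrasting. Because $u$ lies in the semicomplete subdigraph $S$, every vertex of $V(S)\setminus\{u\}$ falls into $A=N^{+}_{S}(u)\setminus N^{-}_{S}(u)$, $B=N^{-}_{S}(u)\setminus N^{+}_{S}(u)$, or the set $C$ of vertices on a $2$-cycle with $u$; the out-branching covers $A\cup C$ directly from $u$ and the in-branching covers $B\cup C$ directly into $u$, so the only difficulty is to let the out-branching reach $B$ and the in-branching escape from $A$. Two arc-disjoint paths from the terminal component of $D\langle A\rangle$ to the initial component of $D\langle B\rangle$ (or to $u$ when $B=\emptyset$), guaranteed by 2-arc-strongness, do exactly this --- one path is donated to each branching --- and the hypothesis that each vertex of $D-S$ has two in- and two out-neighbours in $S$ is used only to hang each internal vertex of one path onto the \emph{other} branching via a spare neighbour. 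Your instinct to mimic the proof of Theorem \ref{thm-Arcdecsplit} imports a much harder analysis that is calibrated for strong arc decompositions and does not obviously terminate here; the direct two-path construction is both shorter and actually complete.
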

\begin{proof}
	 Let $N^+_S(u) = A\cup C$ and $N^-_{S}(u)= B\cup C$, where $C$ is the set of vertices that form a 2-cycle with $u$. Possibly $A, B$ or $C$ is empty.  We may assume that  $A\cup B\neq \emptyset$ since otherwise $S$ clearly has a good pair rooted at $s$ and the claim follows by Lemma \ref{(D-X)->Dpair}. By symmetry (reversing all arcs of $D$ if necessary) we may assume that $A\neq \emptyset$.  
	 Let $Ter(A)$ be the terminal component of $D\left\langle  A \right\rangle$ and let  $Ini(B)$ be the initial component of $D\left\langle  B \right\rangle$ when $B\neq \emptyset$.
	 
	 Since $D$ is 2-arc-strong, there are two arc-disjoint $(V(Ter(A)),W)$-paths $P_1,P_2$ in $D$, where $W=V(Ini(B))$ if $B\neq\emptyset$ and $W=\{u\}$ when $B$ is empty.  Suppose that  $p_1$ is the first vertex of $P_1$ and $q_1$ is the first vertex of $P_1$ in $B\cup C\cup\{u\}$. Similarly,  suppose that $p_2$ is the last vertex of $P_2$ in $A\cup C\cup\{u\}$ and $q_2$ is the last vertex of $P_2$. Let $Q_i=P_i[p_i,q_i]$ for all $i\in[2]$. Note that $p_2=q_2=u$ when $B$ is empty. 
	
	By our assumption, for each vertex $w\in D-S$, if it belongs to $Q_2-Q_1$, then it has an out-neighbor $w_o$ in $V(S)$ which is distinct from the successor of $w$ on $Q_2$ and if $w$ belongs to $Q_1-Q_2$, then it has an in-neighbor $w_I$ in $S$ which is distinct from the predecessor of $w$ on $Q_1$. So we can obtain a good $(u,u)$-pair $(O,I)$ in $D\left\langle V(Q_1\cup Q_2\cup S) \right\rangle$ as follows (if $B$ is empty, then ignore the items related to $B$) and then $D$ has a good pair rooted at $u$ by Lemma \ref{(D-X)->Dpair}, see Figure \ref{fig-gp}.  

	 Construct $I$ from $Q_1$ and an in-branching rooted at $p_1$ in $Ter(A)$ by adding arcs $\{ru:r\in B\cup C\}\cup\{rp_1:r\in A-V(Ter(A))-V(Q_1)\}$ and $\{ww_o: w\in V(Q_2-Q_1)\cap V(D-S)\}$.  Moreover, construct $O$ from the path $Q_2$ and an out-branching rooted at $q_2$ in $Ini(B)$ by adding arcs $\{ur:r\in A\cup C\}\cup\{q_2r:r\in B-V(Ini(B))-V(Q_2)\}$ and $\{w_Iw:w\in V(Q_1-Q_2)\cap  V(D-S)\}$.
\end{proof}
	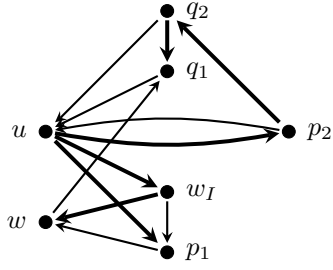
\begin{figure}[H]
		\centering\begin{tikzpicture}[scale=0.8]
		\filldraw[black](2,2) circle (3pt)node[label=right:$q_2$](q2){};
		\filldraw[black](2,1) circle (3pt)node[label=right:$q_1$](q1){};
		\filldraw[black](2,-1) circle (3pt)node[label=right:$w_I$](wI){};
		\filldraw[black](2,-2) circle (3pt)node[label=right:$p_1$](p1){};
		\filldraw[black](4,0) circle (3pt)node[label=right:$p_2$](p2){};
		
		\filldraw[black](0,0) circle (3pt)node[label=left:$u$](u){};
		\filldraw[black](0,-1.5) circle (3pt)node[label=left:$w$](w){};
		
		\foreach \i/\j/\w in {q2/q1/1.5,q2/u/0.8,p2/q2/1.5,q1/u/0.8,w/q1/0.8,u/wI/1.5,u/p1/1.5,wI/p1/0.8,wI/w/1.5,p1/w/0.8}{\path[draw, line width=\w pt] (\i) edge[] (\j);}
		\foreach \i/\j/\w in {u/p2/1.5,p2/u/0.8}{\path[draw, line width=\w pt] (\i) edge[bend right=10] (\j);}
	
		\end{tikzpicture}
	\caption{An illustration of a good $(u,u)$-pair constructed in the proof of Theorem \ref{thm-S}, when $A=\{w_I,p_1\}, B=\{q_1,q_2\}, C=\{p_2\}$ and $V(D-S)=\{w\}$. Moreover, $Q_1=p_1wq_1$ and $Q_2=p_2q_2$. For readability we have only shown the relevant arcs of $D$. The out- and in-branching are shown in bold and thin arcs, respectively.}
	\label{fig-gp}
\end{figure}

The following corollary follows by Theorem \ref{thm-S} with $S=D\left\langle  V_2\cup \{u\} \right\rangle$ and the fact that every vertex in $V_1$ has at least two in-neighbors and at least two out-neighbors in $V_2$ (consequently in $V_2\cup\{u\}$).  
 
\begin{coro}\label{coro-split-ss}
	Let $D=(V_1,V_2;A)$ be a 2-arc-strong split digraph and let $u$ be an arbitrary vertex of $D$. If $D\left\langle  V_2\cup \{u\} \right\rangle$ is semicomplete, then $D$ has a good pair rooted at $u$. In particular, every 2-arc-strong semicomplete split digraph has a good $(u,u)$-pair for every choice of $u$. 
\end{coro}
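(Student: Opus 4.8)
The plan is to obtain this corollary as an immediate consequence of Theorem~\ref{thm-S}, taking $S=D\left\langle V_2\cup\{u\}\right\rangle$; essentially all of the work lies in checking the hypotheses of that theorem. First I would note that, for the first statement, $S$ is by assumption a semicomplete induced subdigraph of $D$ and that it contains $u$, so the structural hypotheses on $S$ hold trivially.

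The one point requiring genuine verification is the degree condition of Theorem~\ref{thm-S}: every vertex of $D-S$ must have at least two in-neighbors and at least two out-neighbors in $V(S)$. Here the key observation is that $V(D)\setminus V(S)=V_1\setminus\{u\}\subseteq V_1$, so every vertex $w\in D-S$ lies in the independent set $V_1$. Since $V_1$ is independent, all neighbors of $w$ lie in $V_2\subseteq V(S)$. As $D$ is 2-arc-strong, $w$ has in-degree and out-degree at least $2$, and all of the corresponding neighbors lie in $V(S)$; hence $w$ has at least two in-neighbors and at least two out-neighbors in $V(S)$. This is exactly the hypothesis of Theorem~\ref{thm-S}, which then yields a good $(u,u)$-pair in $D$, establishing the first statement.

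For the ``in particular'' statement, I would check that in a semicomplete split digraph $D\left\langle V_2\cup\{u\}\right\rangle$ is semicomplete for every choice of $u$, so that the first statement applies. If $u\in V_2$ this is just the semicompleteness of $D\left\langle V_2\right\rangle$; if $u\in V_1$, the definition of a semicomplete split digraph guarantees that $u$ is adjacent to every vertex of $V_2$, so together with the semicompleteness of $D\left\langle V_2\right\rangle$ the whole induced subdigraph $D\left\langle V_2\cup\{u\}\right\rangle$ is semicomplete. Applying the first statement then produces a good $(u,u)$-pair for every $u$, as claimed.

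I do not anticipate any real obstacle, since Theorem~\ref{thm-S} carries the argument; the only subtlety is recognizing that the independence of $V_1$ forces every neighbor of a $V_1$-vertex into $V_2$, which is precisely what converts the global 2-arc-strong degree bound into the local two-neighbor condition inside $V(S)$ demanded by Theorem~\ref{thm-S}.
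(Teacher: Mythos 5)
Your proposal is correct and follows exactly the paper's route: the paper also derives this corollary by applying Theorem \ref{thm-S} with $S=D\left\langle V_2\cup\{u\}\right\rangle$, using the observation that every vertex of $V_1$ has all its neighbors in $V_2$ and hence, by 2-arc-strongness (and the absence of parallel arcs), at least two in-neighbors and two out-neighbors there. Your verification of the degree hypothesis and the ``in particular'' case is a correct, slightly more explicit write-up of the same one-line argument.
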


\begin{conj}
 Every 2-arc-strong semicomplete split digraph $D$ contains a good $(u,v)$-pair for every choice of vertices $u,v$ of $D$.
\end{conj}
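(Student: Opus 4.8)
The plan is to extend Corollary~\ref{coro-split-ss} from a common root to two prescribed roots, keeping the two engines that already drive the same-root case: Lemma~\ref{(D-X)->Dpair}, which attaches the vertices of the independent set to a good pair built on a smaller vertex set, and a Theorem~\ref{thm-S}-style construction of the pair itself out of two arc-disjoint paths. The structural fact that makes both work is that in a semicomplete split digraph every $t\in V_1$ is adjacent to all of $V_2$, so, since $D$ is 2-arc-strong and $V_1$ is independent, $t$ has at least two in- and two out-neighbours in $V_2$; in particular every $t\in V_1\setminus\{u,v\}$ has an in- and an out-neighbour in any set $X$ with $V_2\subseteq X$, which is exactly the hypothesis of Lemma~\ref{(D-X)->Dpair}. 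The argument then divides according to how many of the roots $u,v$ lie in $V_1$.

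Suppose first that at most one root lies in $V_1$. Then a semicomplete induced subdigraph of $D$ contains both roots: take $S=D\langle V_2\rangle$ when $u,v\in V_2$, and $S=D\langle V_2\cup\{u\}\rangle$ (which is semicomplete because $u$ is adjacent to all of $V_2$) when, say, $u\in V_1$ and $v\in V_2$. I would prove a distinct-root version of Theorem~\ref{thm-S}: if $D$ is 2-arc-strong, $S$ is a semicomplete induced subdigraph containing both $u$ and $v$, and every vertex of $D-S$ has at least two in- and out-neighbours in $V(S)$, then $D$ has a good $(u,v)$-pair. Its proof would reuse the skeleton of Theorem~\ref{thm-S}: analyse $N^+_S(u)$ and $N^-_S(v)$, use the 2-arc-strength of $D$ to route two arc-disjoint paths from the terminal component of $N^+_S(u)$ to the initial component of $N^-_S(v)$ (through $D-S$ when needed), and assemble the out-branching from $u$ and the in-branching into $v$ from these paths together with branchings inside the extreme strong components of $S$, finishing by absorbing $D-S$ through Lemma~\ref{(D-X)->Dpair} as in Theorem~\ref{thm-S}. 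The only genuinely new bookkeeping is that the roots now differ, so one out-arc of $u$ must be reserved for the out-branching and one in-arc of $v$ for the in-branching, without collision.

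The hard case is $u,v\in V_1$ with $u\neq v$. Because $V_1$ is independent, $u$ and $v$ are non-adjacent, so no semicomplete induced subdigraph contains both and the previous reduction fails; moreover the natural core $D\langle V_2\cup\{u,v\}\rangle$ need not be 2-arc-strong, so no earlier result applies to it directly. Here I would argue inside $D$, using the remaining vertices of $V_1$ as connectors exactly as the splitting-off operation of Section~\ref{sec-prelim} does. Concretely, fix an out-arc $ua$ and an in-arc $bv$ with $a,b\in V_2$; the aim is a spanning out-branching that leaves $u$ through $ua$ and a spanning in-branching that enters $v$ through $bv$, arc-disjoint, with $v$ reached by one of its in-arcs and $u$ leaving by a second out-arc. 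This reduces to finding arc-disjoint spanning out- and in-branchings of the semicomplete part rooted at $a$ and $b$ respectively. For that I would split off a suitable $2$-feasible family of paths through $V_1\setminus\{u,v\}$ so that the $V_2$-part becomes a 2-arc-strong semicomplete multigraph; unless it is one of the four exceptions of Theorem~\ref{thm-SDM} it then has a strong arc decomposition and hence, by Observation~\ref{ob-Arcdecom-GP}, a good $(a,b)$-pair, which lifts back through the connectors to a good $(u,v)$-pair of $D$ after absorbing the unused $V_1$-vertices by Lemma~\ref{(D-X)->Dpair}.

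I expect this last case to be the main obstacle, and probably the reason the statement is only conjectured. Coordinating two distinct attachment points on the independent side while keeping the induced branchings on $V_2$ arc-disjoint seems to demand more than the common-root argument, particularly when $D\langle V_2\rangle$ is far from strong and $u,v$ attach near the ``wrong'' ends of its acyclic ordering. I anticipate that controlling the branching analogues of the sets $W^+_{\mathcal Q},W^-_{\mathcal Q}$ from the proof of Theorem~\ref{thm-Arcdecsplit}, together with handling the four small exceptions of Theorem~\ref{thm-SDM} and the low-degree configurations not covered by the degree-3 machinery of that proof, is where the real work lies and where a new idea may be required.
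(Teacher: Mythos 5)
This statement is posed in the paper as an open conjecture: the paper proves only the common-root case (Corollary~\ref{coro-split-ss}, via Theorem~\ref{thm-S}) and explicitly leaves the general $(u,v)$ case unresolved, so there is no proof of record to compare yours against. What you have written is a plan rather than a proof, and you concede as much; the two places where it stops being a proof are exactly the places where the known techniques break. First, the ``distinct-root version of Theorem~\ref{thm-S}'' is not a bookkeeping variant. The construction there leans entirely on the roots coinciding: the in-branching terminates by sending every vertex of $B\cup C$ directly into $u$ along in-arcs of $u$, while the out-branching begins by sending $u$ directly into $A\cup C$ along out-arcs of $u$, and these two stars are arc-disjoint for free because one uses only arcs entering $u$ and the other only arcs leaving $u$. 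With $u\neq v$ the sets $N^+_S(u)$ and $N^-_S(v)$ can overlap arbitrarily, $v$ itself may lie in $N^+_S(u)$, and the two arc-disjoint paths supplied by 2-arc-strength must now stitch together structures anchored at two different vertices; nothing in the proof of Theorem~\ref{thm-S} tells you how to do this without an arc being demanded by both branchings. That collision is precisely the failure mode exhibited by the digraphs of Figure~\ref{fig:split-nogp} (Proposition~\ref{prop-2strNoGP}), so you cannot wave it away.

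Second, in the case $u,v\in V_1$ you invoke the splitting-off machinery of Section~\ref{sec-prelim}, but Lemma~\ref{lem-Dstar} and Corollary~\ref{coro-DV2star2ArcStr} require every vertex of $V_1$ to have in- and out-degree at least $3$; a $2$-arc-strong semicomplete split digraph only guarantees degree $2$ on each side for vertices of $V_1$, and the lifting step of Lemma~\ref{lem-Dstar} genuinely needs the spare in/out pair that degree $3$ provides. So the reduction to a $2$-arc-strong multigraph on $V_2$ and thence to Theorem~\ref{thm-SDM} and Observation~\ref{ob-Arcdecom-GP} is not available, quite apart from the further problem of routing $u$ into the in-branching and $v$ into the out-branching through their remaining (possibly unique) arcs. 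Your own closing paragraph identifies this correctly: the proposal maps out where the difficulty sits but does not resolve it, and the statement remains a conjecture.
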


The example in Figure \ref{fig:split-nogp} shows that even vertex-connectivity 2 is not sufficient to guarantee the existence of a good $(u,v)$-pair for every choice of $u$ and $v$ in a general split digraph, even when both $u$ and $v$ are vertices of $V_2$ and the subdigraph induced by $V_2$ is strong.
\begin{figure}[H]
	\centering
	\subfigure{\begin{minipage}[t]{0.45\linewidth}
			\centering\begin{tikzpicture}[scale=0.8]
			\coordinate [label=center:$V_2$] () at (0,2.5);
			\coordinate [label=center:$V_1$] () at (-3,2.5);
			\filldraw[black](0,0) circle (3pt)node[label=right:$u_t^+$](ut+){};
			\filldraw[black](0,1) circle (3pt)node[label=right:$v$](v){};
			\filldraw[black](0,2) circle (3pt)node[label=right:$b$](vI+){};
			\filldraw[black](0,-1) circle (3pt)node[label=right:$u$](u){};
			\filldraw[black](0,-2) circle (3pt)node[label=right:$a$](uo-){};
			\filldraw[black](-3,1.5) circle (3pt)node[label=left:$v_t$](vI){};
			\filldraw[black](-3,0) circle (3pt)node[label=left:$w$](w){};
			\filldraw[black](-3,-1.5) circle (3pt)node[label=left:$u_t$](uo){};
			\draw[ line width=0.8pt] (0.5,0) ellipse [x radius=30pt, y radius=20pt];
			\coordinate [label=right:$W$] () at (1.5,0);
			
			\foreach \i/\j/\t in {
				vI+/v/0,
				v/ut+/0,
				ut+/u/0,
				u/uo-/0,
				uo-/uo/0,
				uo-/w/10,
				w/uo-/10,
				u/uo/10,
				uo/u/10,
				uo/ut+/0,
				ut+/vI/0,
				vI/v/10,
				v/vI/10,
				vI+/w/10,
				w/vI+/10,
				vI/vI+/0
			}{\path[draw, line width=0.8] (\i) edge[bend left=\t] (\j);}
			\draw[-stealth,line width=1.8pt] (2.5,1) -- (2.5,-1);	
			\end{tikzpicture}\caption*{$D_1$}\end{minipage}}
	\subfigure{\begin{minipage}[t]{0.45\linewidth}
			\centering\begin{tikzpicture}[scale=0.8]
			\coordinate [label=center:$V_2$] () at (0,2.5);
			\coordinate [label=center:$V_1$] () at (-3,2.5);
			\filldraw[black](0,0) circle (3pt)node[label=right:$u_t^+$](ut+){};
			\filldraw[black](0,1) circle (3pt)node[label=right:$b$](b){};
			\filldraw[black](0,2) circle (3pt)node[label=right:$v$](v){};
			\filldraw[black](0,-1) circle (3pt)node[label=right:$a$](a){};
			\filldraw[black](0,-2) circle (3pt)node[label=right:$u$](u){};
			\filldraw[black](-3,1.5) circle (3pt)node[label=left:$v_t$](vt){};
			\filldraw[black](-3,-1.5) circle (3pt)node[label=left:$u_t$](ut){};
			\draw[ line width=0.8pt] (0.5,0) ellipse [x radius=30pt, y radius=20pt];
			\coordinate [label=right:$W$] () at (1.5,0);
			
			\foreach \i/\j/\t in {
				b/v/0,
				u/a/0,
				a/ut/0,
				ut/u/10,
				u/ut/10,
				ut/ut+/0,
				ut+/vt/0,
				vt/b/0,
				v/vt/10,
				vt/v/10
			}{\path[draw, line width=0.8] (\i) edge[bend left=\t] (\j);}
			\path[draw, line width=0.8] (a) edge[bend right=25] (b);
			\draw[-stealth,line width=1.8pt] (2.5,1) -- (2.5,-1);	
			\end{tikzpicture}\caption*{$D_2$}\end{minipage}}	
	\caption{Split digraphs with vertex partition $V_1\cup V_2$ such that $V_1$ is independent and $D\left\langle V_{2}\right\rangle$ is semicomplete. $W$ is an arbitrary semicomplete digraph containing the vertex $u_t^+$.  All arcs in $D_1\left\langle V_{2}\right\rangle-W$ are from top to bottom and all arcs in $D_2\left\langle V_{2}\right\rangle-W$ except for $\{ua,ab,bv\}$ are from top to bottom. Note that $D_2\left\langle V_{2}\right\rangle$ is strong.}\label{fig:split-nogp}
\end{figure}

\begin{prop}
	The 2-strong split digraphs in Figure \ref{fig:split-nogp} have no good $(u,v)$-pairs.
\end{prop}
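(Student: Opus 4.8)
The plan is to rule out a good $(u,v)$-pair by a cut argument applied to several cuts at once. Suppose, for contradiction, that $(O,I)$ is such a pair, where $O$ is an out-branching rooted at $u$ and $I$ an arc-disjoint in-branching rooted at $v$; thus every vertex other than $u$ has exactly one in-arc in $O$, and every vertex other than $v$ has exactly one out-arc in $I$. The basic tool is the following observation: if $S\subseteq V$ satisfies $u\in S$ and $v\notin S$, then the existence of a good $(u,v)$-pair forces $d^+(S)\ge 2$, since $O$ must leave $S$ to reach $v$ and $I$ must leave $S$ to route the vertices of $S$ to $v$, and these are distinct arcs by arc-disjointness. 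Hence, whenever $d^+(S)=2$, exactly one of the two arcs leaving $S$ lies in $O$ and the other lies in $I$. Note that no single cut suffices here, because one checks that $d^+(S)\ge 2$ for all such $S$ (the digraph is in fact $2$-arc-strong); the idea is instead to use this refined split condition along a whole nested family of tight cuts.

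First I would set up that nested family. For $D_1$, take $T_0=\{u,a,u_t\}$, $T_1=T_0\cup\{w\}$, $T_2=T_1\cup W$ and $T_3=T_2\cup\{b\}$, where $W$ is the semicomplete block containing $u_t^+$. Using only that each of $u,a$ is dominated by all of $W$ and that $u_t^+$ is the unique vertex of $W$ joined to $u_t$ and to $v_t$, one checks that the arcs leaving these four sets are exactly $\{a\to w,\ u_t\to u_t^+\}$, $\{w\to b,\ u_t\to u_t^+\}$, $\{w\to b,\ u_t^+\to v_t\}$ and $\{b\to v,\ u_t^+\to v_t\}$, each of size $2$. Since consecutive sets share one boundary arc, the split condition propagates along the family and yields a clean dichotomy: either the whole upper path $a\to w\to b\to v$ lies in $I$ and the lower path $u_t\to u_t^+\to v_t$ lies in $O$, or the other way around. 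The essential point is that this dichotomy is independent of the arbitrary choice of $W$.

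Then I would close each case with a short trap built on the two $2$-cycles $u\leftrightarrow u_t$ and $v\leftrightarrow v_t$. Suppose the upper path lies in $O$. Then $a\to w\in O$, so the unique out-arc of $a$ in $I$ is $a\to u_t$; hence the $O$-parent of $u_t$ must be $u\to u_t$, which in turn forces the unique out-arc of $u$ in $I$ to be $u\to a$. Now the only child of $u$ in $O$ is $u_t$, whose out-arcs are $u_t\to u$ (pointing at the root, hence unusable in $O$) and $u_t\to u_t^+$ (which lies in $I$); thus $O$ cannot leave $\{u,u_t\}$, a contradiction. Symmetrically, if the upper path lies in $I$, then $b\to v\in I$ and $w\to b\in I$, so the $O$-parent of $b$ must be $v_t\to b$, which forces the out-arc of $v_t$ in $I$ to be $v_t\to v$; but then both in-arcs $b\to v$ and $v_t\to v$ of $v$ lie in $I$, leaving $v$ with no parent in $O$, again a contradiction.

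The digraph $D_2$ is handled by the very same two traps, the only change being that the upper path shortens to $a\to b\to v$ (there is no vertex $w$); the nested family becomes $\{u,a,u_t\}\subset\{u,a,u_t\}\cup W\subset\{u,a,u_t,b\}\cup W$, again with all out-cuts of size $2$ and the same lower path $u_t\to u_t^+\to v_t$. I expect the main obstacle to be the bookkeeping of the second paragraph: verifying that every set in the nested family has out-degree exactly $2$ and, crucially, that this computation together with the identification of the two ``parallel'' boundary paths holds verbatim for an arbitrary semicomplete $W$, so that one really obtains the upper/lower dichotomy on which both contradictions rest.
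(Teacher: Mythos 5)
Your proof is correct. It reaches the same pivotal dichotomy as the paper -- either the ``upper'' route $a\rightarrow w\rightarrow b\rightarrow v$ goes to one branching and the ``lower'' route $u_t\rightarrow u_t^+\rightarrow v_t$ to the other, or vice versa -- but you get there by a different mechanism: the paper first shows by local degree inspection that $uawbv$ and $uu_tu_t^{+}v_tv$ form the \emph{unique} pair of arc-disjoint $(u,v)$-paths, and then observes that the $(u,v)$-paths inside $O$ and $I$ must be exactly these two, whereas you run a nested family of tight out-cuts $T_0\subset T_1\subset T_2\subset T_3$ (each with $d^+(T_i)=2$) and propagate the one-arc-to-each-branching condition along consecutive cuts that share a boundary arc. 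Your cut propagation is slightly more systematic and makes it transparent that the argument is insensitive to the choice of the semicomplete block $W$; the paper's path-uniqueness argument is shorter but relies on the same local facts ($N^-(a)=\{u,w\}$, $N^+(b)=\{v,w\}$, etc.). The endgames are essentially identical: the paper exhibits a single arc ($au_t$ in one case, $v_tb$ in the other) that would have to lie in both $O$ and $I$, while you push the forcing one step further to conclude either that $O$ cannot leave $\{u,u_t\}$ or that $v$ has no parent in $O$ -- the same degree-counting contradiction packaged differently. Both treatments of $D_2$ are routine adaptations.
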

\begin{proof}
	It is not difficult to check that $D_i$ is 2-strong for all $i\in[2]$. Now we claim that $uawbv$ and $uu_tu_t^+v_tv$ is the only pair of arc-disjoint $(u,v)$-paths in $D_1$. Note that for any pair of arc-disjoint $(u,v)$-paths $P_1,P_2$ in $D_1$, we can label these so that $awb\in P_1$ and $u_tu_t^+v_t \in P_2$. Since $N_{D_1}^-(a)=\{u,w\}$ and $aw\in P_1$, the arc $ua\in P_1$. In the same way, we have $bv\in P_1$ as $N_{D_1}^+(b)=\{v,w\}$. That is, $P_1=uawbv$. On the other hand, since the out-neighbor are of $u$ in $D_1$ are $u_t, a$ and the arc $ua$ belongs to $P_1$, we have $uu_t\in P_2$ and we also have $v_tv\in P_2$ by symmetry, which shows that  $P_2=uu_tu_t^+v_tv$. 
	
	Suppose that $D_1$ has a good $(u,v)$-pair $(O,I)$ in $D_1$ and assume that $P_i$ and $P_{3-i}$ ($i\in[2]$) are the $(u,v)$-path in $O$ and $I$, respectively. If $i=1$, then $uu_t\in I$ and $aw\in O$. By the definitions of in- and out-branchings, each vertex in $D_1-u$ has in-degree one in $O$ and each vertex in $D_1-v$ has out-degree one in $I$.  Hence the arc $au_t$ should be used in $O$ to collect $u_t$ and also should be used in $I$ to collect $a$, a contradiction. For the case $i=2$, it is not difficult to check that the arc $v_tb$ should be used both in $O$ and $I$, a contradiction again. Therefore, $D_1$ has no good $(u,v)$-pair. Similarly, $uabv$ and $uu_tu_t^+v_tv$ is the only pair of arc-disjoint $(u,v)$-paths in $D_2$ and there is no good $(u,v)$-pair in $D_2$ as the arc $v_tb$ or $au_t$ should be used both in the out- and in-branchings.
\end{proof}

The following proposition follows immediately by the fact that the subdigraph $W$ in Figure \ref{fig:split-nogp} can be any semicomplete digraph. 

\begin{prop}\label{prop-2strNoGP}
	There are infinitely many 2-strong split digraphs which do not have good $(u,v)$-pairs for some choice of $u, v$.
\end{prop}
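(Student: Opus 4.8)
The plan is to present the examples not as two isolated digraphs but as a single parametrized family, and then to argue that the family is infinite while every member retains the two required properties (being $2$-strong and admitting no good $(u,v)$-pair). First I would observe that the construction in Figure~\ref{fig:split-nogp} does not specify a unique digraph: the block $W$ is only required to be a semicomplete digraph containing the vertex $u_t^+$. Since the complete digraph $K_n$ is semicomplete for every $n\geq 1$, we may take $W=K_n$, with one of its vertices playing the role of $u_t^+$, and thereby obtain for each $n$ a split digraph $D_1^{(n)}$ whose order grows with $n$. These digraphs have unbounded order, so among them there are infinitely many pairwise non-isomorphic ones.

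Next I would verify that each $D_1^{(n)}$ is genuinely a $2$-strong split digraph. It is a split digraph because $V_1$ is independent and $V_2$ induces a semicomplete digraph, where the semicompleteness of $W$ is exactly what guarantees that $D_1^{(n)}\left\langle V_2\right\rangle$ is semicomplete. The verification that $D_1^{(n)}$ is $2$-strong is identical to the one carried out in the previous proposition for the drawn example: deleting any single vertex leaves the gadget strongly connected, and $W$, being semicomplete, remains strong after one deletion. None of these connectivity arguments refers to the internal arcs of $W$.

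Finally, and this is the only point that requires a moment's thought, I would argue that the proof of the previous proposition applies verbatim to every $D_1^{(n)}$. That proof showed that $uawbv$ and $uu_tu_t^+v_tv$ are the only pair of arc-disjoint $(u,v)$-paths, and then derived a contradiction from any putative good $(u,v)$-pair. Its key steps use only the in- and out-neighbourhoods of the gadget vertices $a,b,u,v,u_t,v_t$ together with the fact that $u_t^+\in V_2$; they never invoke an arc internal to $W$. The main (and mild) obstacle is precisely this invariance claim: one must be sure that enlarging $W$ cannot create a new arc-disjoint $(u,v)$-path. This holds because every $(u,v)$-path that enters $W$ must do so through $u_t^+$, and the cut structure around $u_t^+$ is unchanged, so the internal richness of $W$ is invisible to the path analysis. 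Hence the same two paths remain the unique arc-disjoint pair and the same contradiction is reached, showing that $D_1^{(n)}$ has no good $(u,v)$-pair. Applying the identical reasoning to $D_2$ with $W=K_n$ yields a second infinite family, which completes the argument.
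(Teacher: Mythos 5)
Your proposal is correct and takes essentially the same route as the paper: the paper obtains the infinite family precisely by observing that $W$ in Figure \ref{fig:split-nogp} may be an arbitrary semicomplete digraph (so in particular $K_n$ for every $n$), since the proof that $D_1$ and $D_2$ are 2-strong and have no good $(u,v)$-pair never uses the internal arcs of $W$. Your explicit check that enlarging $W$ cannot create a new arc-disjoint $(u,v)$-path is a detail the paper leaves implicit, but it is the same argument.
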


Combining Observation \ref{ob-Arcdecom-GP} and Proposition \ref{prop-2strNoGP}, we have the following corollary, which implies that the connectivity condition in Theorem \ref{thm-Arcdecsplit} is best possible in some sense.

\begin{coro}\label{coro-2strNoSAD}
	There are infinitely many 2-strong split digraphs which do not have a strong arc decomposition.
\end{coro}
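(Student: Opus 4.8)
The plan is to derive the corollary immediately from the two facts just established, using nothing more than the contrapositive of Observation \ref{ob-Arcdecom-GP}. First I would recall that Observation \ref{ob-Arcdecom-GP} asserts that any digraph possessing a strong arc decomposition must contain a good $(u,v)$-pair for \emph{every} ordered choice of vertices $u,v$. Taking the contrapositive, a digraph that fails to have a good $(u,v)$-pair for even a \emph{single} choice of $u,v$ cannot admit a strong arc decomposition.

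Next I would invoke Proposition \ref{prop-2strNoGP}, which supplies an infinite family of 2-strong split digraphs---namely the digraphs $D_1$ and $D_2$ of Figure \ref{fig:split-nogp} built over an arbitrary semicomplete digraph $W$---each lacking a good $(u,v)$-pair for the distinguished pair of vertices $u,v$. Since the semicomplete part $W$ may be taken arbitrarily large, this already yields infinitely many pairwise non-isomorphic examples. Combining this with the contrapositive formulation above, every member of this family is a 2-strong split digraph with no strong arc decomposition, which is exactly the assertion of the corollary.

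The main obstacle in this chain has already been discharged in the preceding propositions: the genuine work lies in verifying that the digraphs in Figure \ref{fig:split-nogp} are 2-strong and in the case analysis showing that they admit an essentially unique pair of arc-disjoint $(u,v)$-paths which cannot be simultaneously extended to an out-branching and an arc-disjoint in-branching (the forced reuse of the arc $au_t$ or $v_tb$). Given those facts, together with the flexibility of choosing $W$, the corollary is a purely logical consequence and requires no further construction or calculation. The only point I would take care to state explicitly is that enlarging $W$ genuinely produces new, non-isomorphic 2-strong split digraphs, so that the adjective \emph{infinitely many} is justified rather than merely inherited as a phrase from Proposition \ref{prop-2strNoGP}.
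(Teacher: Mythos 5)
Your proposal is correct and matches the paper's own argument exactly: the corollary is stated as an immediate combination of Observation \ref{ob-Arcdecom-GP} and Proposition \ref{prop-2strNoGP}, i.e., the contrapositive of the observation applied to the infinite family from the proposition. Your extra remark that enlarging $W$ yields genuinely non-isomorphic examples is a reasonable point of care but is already implicit in Proposition \ref{prop-2strNoGP}.
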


\section{Remarks and further open problems}\label{sec-problem}

\begin{problem}
	Does all but a finite number of  2-arc-strong semicomplete split digraphs have a strong arc decomposition?
\end{problem}

Note that we allow the independent set $V_1$ of a split digraph to be empty. Thus  every semicomplete digraph is a split digraph and hence $S_4$
is  an exception above.\\

The \textbf{degree} of a vertex $v$ in $D$ is $d(v)=d^+(v)+d^-(v)$.
\begin{problem}
	Does every 2-arc-strong split digraph with minimum degree at least 5 have a strong arc decomposition?
\end{problem}

The \textbf{2-linkage problem} is the following: given a digraph $D=(V,A)$ and four distinct vertices $s_1,s_2,t_1,t_2$; does $D$ have a pair of disjoint paths $P_1,P_2$ such that $P_i$ is an $(s_i,t_i)$-path for $i=1,2$?

The 2-linkage problem can be solved in polynomial time for semicomplete digraphs \cite{bangSJDM5}.

\begin{problem}
  Is there a polynomial algorithm for the 2-linkage problem for split digraphs?
\end{problem}

It was shown in \cite{thomassen1984} that every 5-strong semicomplete digraph $D$ is 2-linked, that is, it has a pair of disjoint paths $P_1,P_2$ such that $P_i$ is an $(s_i,t_i)$-path for $i=1,2$ for all choices of distinct vertices $s_1,s_2,t_1,t_2$. It was shown in \cite{bangADM41} that the bound on the connectivity cannot be lowered.

Using the same approach as on pages 386 and 387 in \cite{bang2009} it is easy to show that every 6-strong semicomplete split digraph is 2-linked.

\begin{problem}
  Is every 6-strong split digraph 2-linked?
\end{problem}

%\bibliography{refs}
\end{document}